\newtheorem{definition}{Definition}[section]
\newtheorem{theorem}[definition]{Theorem}
\newtheorem{lemma}[definition]{Lemma}
\newtheorem{corollary}[definition]{Corollary}
\newtheorem{proposition}[definition]{Proposition}
\newtheorem{conjecture}[definition]{Conjecture}
\newtheorem*{thmrepeat}{Theorem 1.3}
\newcommand{\newreptheorem}[2]{\newtheorem*{rep@#1}{\rep@title}\newenvironment{rep#1}[1]{\def\rep@title{#2 \ref*{##1}}\begin{rep@#1}}{\end{rep@#1}}}
\theoremstyle{definition}
\newtheorem{remark}[definition]{Remark}
\newcommand{\gal}{\mathrm{Gal}}
\newcommand{\rank}{\mathrm{rank}}
\newcommand{\im}{\mathrm{im}}
    \DeclareFontFamily{U}{wncy}{}
    \DeclareFontShape{U}{wncy}{m}{n}{<->wncyr10}{}
    \DeclareSymbolFont{mcy}{U}{wncy}{m}{n}
    \DeclareMathSymbol{\Sh}{\mathord}{mcy}{"58}
\title{Imaginary quadratic fields $F$ with $X_0(15)(F)$ finite.}
\author[T.~Evink]{Tim Evink}
\address{Institute of Algebra and Number Theory, Ulm University, Helmholtzstr.~18, 89081 Ulm, Germany.}
\email{tim.evink@uni-ulm.de}
\begin{document}

\begin{abstract}
    Caraiani and Newton have proven that if $F$ is an imaginary quadratic number field such that $X_0(15)$ has rank $0$ over $F$, then every elliptic curve over $F$ is modular. This paper is concerned with the quadratic fields $F=\mathbb{Q}(\sqrt{-p})$ for a prime number $p$. We give explicit conditions on $p$ under which the rank is $0$, and prove that these conditions are satisfied for $87,5\%$ of the primes for which the rank is expected to be even based on the parity conjecture. We also show these conditions are satisfied if and only if rank $0$ follows from a $4$-descent over $\mathbb{Q}$ on the quadratic twist $X_0(15)_{-p}$. To prove this, we perform two consecutive $2$-descents and prove this gives rank bounds equivalent to those obtained from a $4$-descent using visualisation techniques for $\Sh[2]$. In fact we prove a more general connection between higher descents for elliptic curves which seems interesting in its own right.
\end{abstract}

\maketitle

\section{Introduction}
Consider the modular curve $X_0(15)$, which is an elliptic curve over $\mathbb{Q}$ of rank $0$. Caraiani and Newton \cite{caraianinewton2023modularity} have proven that if $F$ is an imaginary quadratic number field such that $X_0(15)$ has rank $0$ over $F$, then every elliptic curve over $F$ is modular. For positive square-free integers $d$, the global root number of $X_0(15)$ over $\mathbb{Q}(\sqrt{-d})$ turns out to be $1$ precisely when
\begin{equation}\label{eq:rootnumbercong}
    d\equiv 0,1,2,3,4,5,8,12\bmod 15.
\end{equation}
Thus, based on the parity conjecture we expect the rank of $X_0(15)$ over $\mathbb{Q}(\sqrt{-d})$ to be even for precisely those $d$ satisfying \eqref{eq:rootnumbercong}. Forthcoming work of Smith will show that we have rank $0$ for a proportion of 100\% of such $d$, see the introduction of \cite{caraianinewton2023modularity}. However, this density result might give a misleading picture for the amount of occurrences of positive rank up to some bound: out of the $3158$ positive square-free integers $d\leq 10^4$ satisfying \eqref{eq:rootnumbercong}, we have $514$ instances of positive rank\footnote{In all these instances of positive rank, the rank is in fact $2$. The smallest positive square-free $d$ such that $\rank(X_0(15)/\mathbb{Q}(\sqrt{-d}))=4$ is $d=20445$. These conclusions were found by calling the Magma \cite{magma} function \texttt{Rank} on the quadratic twist $X_0(15)_{-d}$.}.\\
In this paper we focus on the case where $d=p$ is a prime number, and give explicit conditions on $p$ for which rank $0$ occurs. The main result concerning occurrences of rank $0$ is formulated using R\'edei symbols $[a,b,c]$ as developed in \cite{stevenhagen2021redei}, a tri-linear number theoretic symbol taking values in $\{\pm 1\}$ that encodes splitting behaviour of rational primes in certain $D_4$ or $C_4$-extensions of $\mathbb{Q}$.
\begin{theorem}\label{thm:mainrank}
Let $p$ be a prime satisfying one of the following conditions.
\begin{enumerate}[(i)]
\item $p\equiv 1,2,3,4,5,8\bmod 15$ and $p\not\equiv 1\bmod 8$, 
\item $p\equiv 1,4\bmod 15$, $p\equiv 1\bmod 8$ and $[-1,10,p]=-1$,
\item $p\equiv 2,8\bmod 15$, $p\equiv 1\bmod 8$ and $[-1,2,p]=1$.
\end{enumerate}
Then $X_0(15)$ has rank $0$ over $\mathbb{Q}(\sqrt{-p})$. Moreover, the density of these primes amongst the primes $p$ of the form $p\equiv 1,2,3,4,5,8\bmod 15$ is $87,5\%$.
\end{theorem}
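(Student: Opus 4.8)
The plan is to reduce the statement to a rank computation over $\mathbb{Q}$ and then to separate the arithmetic (rank $0$) assertion from the analytic (density) assertion. Since $X_0(15)$ already has rank $0$ over $\mathbb{Q}$, and for a quadratic field $\mathbb{Q}(\sqrt{-p})$ the Mordell--Weil rank decomposes as $\rank(X_0(15)/\mathbb{Q}(\sqrt{-p})) = \rank(X_0(15)/\mathbb{Q}) + \rank(X_0(15)_{-p}/\mathbb{Q})$, it suffices to prove that the quadratic twist $X_0(15)_{-p}$ has rank $0$ over $\mathbb{Q}$ whenever $p$ satisfies (i), (ii) or (iii). This is exactly the kind of statement the descent machinery announced in the abstract is built to produce.

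For the rank assertion I would run the two consecutive $2$-descents on $X_0(15)_{-p}$. A first $2$-descent computes the $2$-Selmer group, imposing local conditions at $2,3,5,\infty$ and at the ramified prime $p$; tracking how these conditions vary with the residue of $p$ modulo $8$ and modulo $15$ should show that in the classes appearing in (i), namely $p\not\equiv 1\bmod 8$, the Selmer bound is already $0$ and the rank is forced to be $0$. In the complementary range $p\equiv 1\bmod 8$ the first descent leaves a rank bound of $2$ coming from a candidate class in $\Sh[2]$, and a second descent -- which, by the general comparison result of the paper, is equivalent to a $4$-descent obtained through visualisation -- is then needed to decide whether that class is a genuine Selmer element or dies. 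I expect the whole content of the theorem to sit here: the obstruction class survives or dies according to a single Hilbert-symbol type invariant, and the main work is to identify this invariant with the R\'edei symbol $[-1,10,p]$ when $p\equiv 1,4\bmod 15$ and with $[-1,2,p]$ when $p\equiv 2,8\bmod 15$, with the correct sign. This identification, together with verifying that the listed congruences are precisely those for which the relevant R\'edei symbols are defined, is the principal obstacle.

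For the density, observe first that for primes the congruence $p\equiv 1,2,3,4,5,8\bmod 15$ reduces to $p\equiv 1,2,4,8\bmod 15$ up to the two exceptions $p=3,5$; these residues form the index-$2$ subgroup $\langle 2\rangle$ of $(\mathbb{Z}/15\mathbb{Z})^\times$, with $\{1,4\}$ its squares and $\{2,8\}$ the nontrivial coset. Working inside this subgroup I would stratify by $p\bmod 8$. By Dirichlet the condition $p\not\equiv 1\bmod 8$ has relative density $3/4$ and is covered entirely by (i). On the remaining density-$1/4$ set $p\equiv 1\bmod 8$, the classes $p\equiv 1,4\bmod 15$ and $p\equiv 2,8\bmod 15$ each have relative density $1/2$, and on each I would invoke the Chebotarev density theorem for the governing $C_4$/$D_4$ field of the relevant R\'edei symbol to conclude that the symbol takes its prescribed value with density $1/2$. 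Assembling these contributions gives
\begin{equation*}
\frac{3}{4} + \frac14\cdot\frac12\cdot\frac12 + \frac14\cdot\frac12\cdot\frac12 = \frac{7}{8},
\end{equation*}
the claimed $87{,}5\%$. The one point requiring care is the independence implicit in this count: I would need to check that the governing field of each R\'edei symbol is linearly disjoint from the cyclotomic field $\mathbb{Q}(\zeta_{120})$ recording the congruence conditions, so that the Chebotarev density $1/2$ is genuinely uniform over the congruence classes being conditioned upon.
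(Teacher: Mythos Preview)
Your reduction to $\rank(X_0(15)_{-p}/\mathbb{Q})$, your handling of case~(i) via a first $2$-descent, and your density computation all match the paper's approach closely.

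For cases (ii) and (iii), however, the paper does \emph{not} perform a second descent on $X_0(15)_{-p}$ over $\mathbb{Q}$. Instead it applies the rank decomposition formula a second time: writing $E=X_0(15)$, one has $\rank(E_{-p}/\mathbb{Q})+\rank(E_{d}/\mathbb{Q})=\rank(E_{d}/\mathbb{Q}(\sqrt{-dp}))$ for any $d$. For (ii) the paper takes $d=-1$, so that $\rank(E_{-p}/\mathbb{Q})=\rank(E_{-1}/\mathbb{Q}(\sqrt{p}))$, and then does a single $2$-Selmer computation of $E_{-1}$ over $K=\mathbb{Q}(\sqrt{p})$. The R\'edei symbol enters naturally here: a basis of $K(S)$ is built from elements $x_d$ with $K(\sqrt{x_d})/K$ minimally ramified, and the local images of these generators at the split primes over $2$ and $5$ are governed by $[p,-1,2]$ and $[p,-1,5]$, whose product is $[-1,10,p]$ by trilinearity and reciprocity. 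For (iii) the paper uses a less obvious trick you do not mention: it introduces an auxiliary prime $q=17$ (satisfying $q\equiv 17\bmod 120$ and $\rank(E_{-17}/\mathbb{Q})=2$), takes $d=-17$, and computes $S^2(E_{-17}/\mathbb{Q}(\sqrt{17p}))$; since $\rank(E_{-17}/\mathbb{Q})=2$ already saturates the bound when $[-1,2,p]=1$, this forces $\rank(E_{-p}/\mathbb{Q})=0$.

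Your proposed route---a direct higher descent or Cassels--Tate computation on $E_{-p}/\mathbb{Q}$---would also work, and indeed the paper proves after the fact (its Theorem~1.3 and Theorems~1.4--1.5) that the two methods give identical rank bounds. The paper's quadratic-field approach buys concreteness: the R\'edei symbol falls out of the Selmer computation with no separate identification step, whereas your plan leaves ``identify this invariant with the R\'edei symbol'' as the main unexplained obstacle. Conversely, your framing makes transparent why exactly one binary invariant should govern each case.
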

The stated R\'edei symbols in Theorem \ref{thm:mainrank} can be computed using the following.
\begin{enumerate}[(i)]
    \item If $p\equiv 1,9\bmod 40$, then $[-1,10,p]=1\,\Leftrightarrow\, p$ splits completely in $\mathbb{Q}(\sqrt{3+\sqrt{-1}})$.
    \item If $p\equiv 1\bmod 8$, then $[-1,2,p]=1\,\Leftrightarrow\, p$ splits completely in $\mathbb{Q}(\sqrt{1+\sqrt{-1}})$.
\end{enumerate}
See \cite[\S 7]{stevenhagen2021redei}. Combining Theorem \ref{thm:mainrank} with \cite[Thm. 1.1]{caraianinewton2023modularity} we immediately obtain the following corollary.
\begin{corollary}
    Let $p$ be a prime satisfying one of conditions (i)-(iii) of Theorem \ref{thm:mainrank}. Then every elliptic curve defined over $\mathbb{Q}(\sqrt{-p})$ is modular.
\end{corollary}
Let us illustrate the manner in which Theorem \ref{thm:mainrank} is proven. For a prime number $p$ we will see that
\[
\rank(X_0(15)/\mathbb{Q}(\sqrt{-p}))=\rank(X_0(15)_{-p}/\mathbb{Q}),
\]
which makes the quadratic twist $X_0(15)_{-p}$ a natural object of study to determine the rank of $X_0(15)$ over $\mathbb{Q}(\sqrt{-p})$. When $p\equiv 1,2,3,4,5,8\bmod 15$, a basic $2$-Selmer group computation of $X_0(15)_{-p}$ over $\mathbb{Q}$ will show that the rank is $0$ when $p\not\equiv 1\bmod 8$, and at most $2$ when $p\equiv 1\bmod 8$. In the latter case we improve on the upper bound of $2$ by computing $2$-Selmer groups over suitable quadratic number fields in a similar fashion to \cite{evinkheidentop2021} and \cite{evinktoptop2022}, leading to a proof of Theorem \ref{thm:mainrank}.\\
We also analyse how this improvement relates to a natural way of improving on a $2$-descent, namely higher descents. We will show that Theorem \ref{thm:mainrank} implies rank $0$ precisely when this follows from a $4$-descent on $X_0(15)_{-p}$ over $\mathbb{Q}$. In proving this connection, the following result is instrumental.
\begin{theorem}\label{thm:mainsha}
     Let $E$ be an elliptic curve defined over a number field $K$. Let $L=K(\sqrt{d})$ be a quadratic extension such that the following conditions are satisfied.
     \begin{enumerate}
         \item We have $\Sh(E_{d}/K)[2]=0$.
         \item The corestriction map $\Sh(E/L)[2]\to\Sh(E/K)[2]$ is injective.
     \end{enumerate}
     Then the restriction map $\Sh(E/K)\to\Sh(E/L)$ induces an isomorphism
     \begin{equation}\label{eq:shaiso}
     \Sh(E/K)[2^{\infty}]/\Sh(E/K)[2]\xrightarrow{\sim}\Sh(E/L)[2^{\infty}].
     \end{equation}
\end{theorem}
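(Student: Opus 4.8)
The plan is to pass to $2$-primary parts and exploit the restriction--corestriction formalism together with the Cassels--Tate pairing. Write $T=\Sh(E/K)[2^{\infty}]$ and $S=\Sh(E/L)[2^{\infty}]$; both are cofinitely generated $\mathbb{Z}_2$-modules. Let $f=\mathrm{res}\colon T\to S$ and $g=\mathrm{cor}\colon S\to T$ be restriction and corestriction, which satisfy $g\circ f=2$ on $T$ and $f\circ g=1+\sigma$ on $S$, where $\sigma$ generates $\gal(L/K)$. Fixing an isomorphism $E_L\cong(E_d)_L$ over $L$ (whose $\sigma$-conjugate differs by $[-1]$) identifies $\Sh(E_d/L)[2^{\infty}]$ with $S$ while turning the Galois action into $-\sigma$, so the corresponding twisted maps satisfy $\mathrm{res}_d\circ\mathrm{cor}_d=1-\sigma$ on $S$.

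First I would extract the consequences of the two hypotheses. Since $\Sh(E_d/K)[2^{\infty}]$ is a $2$-primary group whose $2$-torsion vanishes by (1), it is zero; hence $\mathrm{cor}_d=0$, so $1-\sigma=\mathrm{res}_d\circ\mathrm{cor}_d=0$, i.e.\ $\sigma$ acts trivially on $S$ and $f\circ g=2$ on $S$ as well. For the kernel, $g\circ f=2$ gives $\ker f\subseteq T[2]$; conversely, for $t\in T[2]$ the element $f(t)$ lies in $S[2]$ and satisfies $g(f(t))=2t=0$, so (2) forces $f(t)=0$. Thus $\ker f=T[2]$, and $\mathrm{res}$ induces an injection $\bar f\colon T/T[2]\hookrightarrow S$. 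The same two facts, $\ker g\subseteq\ker(f\circ g)=S[2]$ together with (2), show that $g=\mathrm{cor}$ is injective.

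The crux is surjectivity of $f$, which does \emph{not} follow from the relations $g\circ f=f\circ g=2$ alone — abstractly one can construct $\mathbb{Z}_2$-modules with $g$ injective, $\ker f=T[2]$, yet $\mathrm{coker}\,f\neq 0$ — so the self-duality of $\Sh$ is essential here. On maximal divisible subgroups $f\circ g=2$ is surjective, hence $f$ maps $T_{\mathrm{div}}$ onto $S_{\mathrm{div}}$ and in particular $S_{\mathrm{div}}\subseteq\mathrm{im}\,f$. For the finite quotients $\bar T=T/T_{\mathrm{div}}$ and $\bar S=S/S_{\mathrm{div}}$ I would invoke the Cassels--Tate pairing, non-degenerate on each, together with its compatibility with restriction and corestriction, $\langle\mathrm{res}\,x,y\rangle_{E/L}=\langle x,\mathrm{cor}\,y\rangle_{E/K}$. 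This makes the induced maps $\bar f\colon\bar T\to\bar S$ and $\bar g\colon\bar S\to\bar T$ mutually adjoint, whence $\mathrm{coker}(\bar f)\cong\ker(\bar g)^{\vee}$. Since $g$ is injective and restricts to a surjection on divisible parts, $\bar g$ is injective, so $\bar f$ is surjective; combined with $S_{\mathrm{div}}\subseteq\mathrm{im}\,f$ this upgrades to $\mathrm{im}\,f=S$. Therefore $\bar f\colon T/T[2]\to S$ is an isomorphism, which is \eqref{eq:shaiso}. The hard part is exactly this final step: identifying the cokernel of restriction through Cassels--Tate duality, verifying the projection formula in the form needed, and separately checking that $\mathrm{res}$ already exhausts the divisible part so that the finite-quotient statement promotes to a statement about all of $S$.
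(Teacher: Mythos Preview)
Your argument is correct and reaches the same conclusion, but it is organised differently from the paper's proof, and the comparison is instructive.

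The paper works throughout with the Weil restriction $A=\mathrm{Res}_{L/K}E_L$ and the $2$-isogeny $\phi\colon E\times E_d\to A$, translating everything into statements about $\Sh$ over $K$. Condition~(1) is used to identify $\Sh(E\times E_d/K)[2^{\infty}]$ with $\Sh(E/K)[2^{\infty}]$, whence $\ker(p_{*,2^{\infty}})=\Sh(A/K)[\phi^{\vee}]\subseteq\Sh(A/K)[2]$; condition~(2) then gives $\Sh(A/K)[\phi^{\vee}]=0$. The surjectivity step invokes the isogeny-compatibility of the Cassels--Tate pairing (Poonen--Stoll) in the form of the non-degenerate pairing between $\Sh(A/K)_{\mathrm{nd}}[\phi^{\vee}]$ and $\Sh(A/K)_{\mathrm{nd}}/\phi\Sh(E\times E_d/K)_{\mathrm{nd}}$.

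You instead stay with restriction and corestriction between $K$ and $L$ and never introduce $A$. Your key extra observation---that condition~(1) forces $\sigma$ to act trivially on $\Sh(E/L)[2^{\infty}]$, obtained by transporting the $\sigma$-action through the $L$-isomorphism $E\cong E_d$---is the direct analogue of the paper's $\Sh(A/K)[\phi^{\vee}]=0$, but arrived at without the Weil-restriction machinery. Your surjectivity step uses the projection formula $\langle\mathrm{res}\,x,y\rangle_{L}=\langle x,\mathrm{cor}\,y\rangle_{K}$ for Cassels--Tate, together with the nice bookkeeping lemma that $g$ injective and bijective on divisible parts forces $\bar g$ injective. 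This res/cor adjointness is of course equivalent, via Shapiro, to the isogeny compatibility the paper cites, so the two arguments are close cousins; yours is more self-contained and avoids abelian surfaces entirely, while the paper's version slots naturally into the visualisation framework of \cite{Bruin_2004} that the surrounding section develops.
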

Theorem \ref{thm:mainsha} is proven in the setting of visualisation of $\Sh[2]$ as in \cite[\S 4]{Bruin_2004}. In loc. cit. an explicit necessary condition for $2$-divisibility in $\Sh[2]$ is given, which can be used to improve on a $2$-descent. The novelty of Theorem \ref{thm:mainsha} lies in the fact that it gives conditions under which the necessary condition is also sufficient, thus connecting a $2^{n+1}$-descent of $E/K$ to a $2^n$-descent of $E/L$. For example, elements of order $2$ in $\Sh(E/L)$ give rise to elements of order $4$ in $\Sh(E/K)$, a conclusion that doesn't follow from the results in \cite{Bruin_2004}. The main idea is to generalise \cite[Lemma 7.1]{BruinHem} to the setting of a $2$-isogeny between Abelian surfaces as in \cite[\S 4]{Bruin_2004}, which allows control over the image of $\Sh(E/K)\to \Sh(A/K)$ as in loc. cit.\\
Using Theorem \ref{thm:mainsha} we show that the rank results of Theorem \ref{thm:mainrank} are equivalent to those obtained from performing a $4$-descent on the twist $X_0(15)_{-p}$ over $\mathbb{Q}$. Specifically, using the Chinese remainder theorem to combine the congruence conditions mod $15$ and mod $8$ from Theorem \ref{thm:mainrank} $(ii)$ and $(iii)$, we prove the following results.
\begin{theorem}\label{thm:resp}
Let $p\equiv 1,49\bmod 120$ be a prime and let $E=X_0(15)$. 
\begin{enumerate}[(i)]
    \item\label{thm:resp:i} If $[-1,10,p]=-1$, then $\rank(E_{-p}/\mathbb{Q})=0$ and $\Sh(E_{-p}/\mathbb{Q})[4]\cong (\mathbb{Z}/2\mathbb{Z})^2$.
    \item\label{thm:resp:ii} If $[-1,10,p]=1$, then $\Sh(E_{-p}/\mathbb{Q})[4]\xrightarrow{\cdot 2}\Sh(E_{-p}/\mathbb{Q})[2]$ is surjective.
    \item\label{thm:resp:iii} If $[-1,10,p]=1$ and $\rank(E_{-p}/\mathbb{Q})=0$, then $\Sh(E_{-p}/\mathbb{Q})[4]\cong (\mathbb{Z}/4\mathbb{Z})^2$.
\end{enumerate}
\end{theorem}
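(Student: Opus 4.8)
The plan is to obtain all three parts from a single application of Theorem~\ref{thm:mainsha} to the pair $(\mathbb{Q},\mathbb{Q}(\sqrt{-p}))$, fed by the two $2$-descent computations already used to prove Theorem~\ref{thm:mainrank}. First I would note that $p\equiv 1,49\bmod 120$ is, by the Chinese remainder theorem, precisely the conjunction $p\equiv 1\bmod 8$ and $p\equiv 1,4\bmod 15$, i.e.\ case~$(ii)$ of Theorem~\ref{thm:mainrank}; hence $\rank(E_{-p}/\mathbb{Q})\in\{0,2\}$, and the rank is $0$ when $[-1,10,p]=-1$. Writing $L=\mathbb{Q}(\sqrt{-p})$, I use that $E_{-p}$ becomes isomorphic to $E=X_0(15)$ over $L$, so $\Sh(E_{-p}/L)=\Sh(X_0(15)/L)$ and $\rank(E_{-p}/\mathbb{Q})=\rank(X_0(15)/L)$; call this common rank $r$.

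Next I would invoke Theorem~\ref{thm:mainsha} with $K=\mathbb{Q}$, with ``$E$'' taken to be $E_{-p}$, and $d=-p$, so that ``$L$''$=L$ and ``$E_d$''$=(E_{-p})_{-p}=X_0(15)$. Condition~$(1)$ is immediate, since $\Sh(X_0(15)/\mathbb{Q})=0$ gives $\Sh((E_{-p})_{-p}/\mathbb{Q})[2]=0$. For condition~$(2)$, let $\sigma$ generate $\mathrm{Gal}(L/\mathbb{Q})$; the kernel of $\mathrm{cores}\colon\Sh(E_{-p}/L)[2]\to\Sh(E_{-p}/\mathbb{Q})[2]$ lies in the $\sigma=-1$ part of $\Sh(E_{-p}/L)[2]$, which is governed by the twist $\Sh((E_{-p})_{-p}/\mathbb{Q})[2]=\Sh(X_0(15)/\mathbb{Q})[2]=0$. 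Making this last reduction precise is the one genuinely delicate point, and I expect it to be the main obstacle. Granting both hypotheses, Theorem~\ref{thm:mainsha} produces an isomorphism $G/G[2]\xrightarrow{\sim}H$, where $G:=\Sh(E_{-p}/\mathbb{Q})[2^{\infty}]$ and $H:=\Sh(X_0(15)/L)[2^{\infty}]$; here $G[2]=\Sh(E_{-p}/\mathbb{Q})[2]$ and $G[4]=\Sh(E_{-p}/\mathbb{Q})[4]$.

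I would then insert the two Selmer computations. The basic $2$-descent of $E_{-p}$ over $\mathbb{Q}$ shows $\dim_{\mathbb{F}_2}G[2]=2-r$. The improved $2$-descent of $X_0(15)$ over $L$ from the proof of Theorem~\ref{thm:mainrank} shows $\dim_{\mathbb{F}_2}H[2]=0$ when $[-1,10,p]=-1$ (which forces $r=0$), and $\dim_{\mathbb{F}_2}H[2]=2-r$ when $[-1,10,p]=1$. The only formal input I need is the canonical identification $G[4]/G[2]\cong (G/G[2])[2]\cong H[2]$, which yields $|G[4]|=|G[2]|\cdot|H[2]|$ and shows that $\cdot 2\colon G[4]\to G[2]$ is surjective exactly when $\dim_{\mathbb{F}_2}H[2]=\dim_{\mathbb{F}_2}G[2]$.

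Finally I would read off the three cases. In $(i)$, $[-1,10,p]=-1$ gives $r=0$ and $H[2]=0$, so $H=0$ and $G=G[2]\cong(\mathbb{Z}/2\mathbb{Z})^2$, whence $\Sh(E_{-p}/\mathbb{Q})[4]=G[4]\cong(\mathbb{Z}/2\mathbb{Z})^2$. In $(ii)$, $[-1,10,p]=1$ gives $\dim_{\mathbb{F}_2}G[2]=\dim_{\mathbb{F}_2}H[2]=2-r$, so $\cdot 2\colon\Sh(E_{-p}/\mathbb{Q})[4]\to\Sh(E_{-p}/\mathbb{Q})[2]$ is surjective. In $(iii)$, adding $r=0$ gives $\dim_{\mathbb{F}_2}G[2]=\dim_{\mathbb{F}_2}H[2]=2$, hence $|G[4]|=16$ while $G[4]$ has exponent dividing $4$ and $G[4][2]=G[2]\cong(\mathbb{Z}/2\mathbb{Z})^2$; the unique such group is $(\mathbb{Z}/4\mathbb{Z})^2$, so $\Sh(E_{-p}/\mathbb{Q})[4]\cong(\mathbb{Z}/4\mathbb{Z})^2$. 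It is worth emphasising that this last step uses only the finite group $G[4]$ and needs neither finiteness of $\Sh(E_{-p}/\mathbb{Q})$ nor the Cassels--Tate pairing.
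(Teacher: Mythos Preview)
Your overall strategy---apply Theorem~\ref{thm:mainsha} once and read off all three parts from the resulting isomorphism $G/G[2]\cong H$---is exactly the paper's strategy, but you have chosen the wrong quadratic extension. You take $d=-p$, so $L=\mathbb{Q}(\sqrt{-p})$ and the relevant twist is $(E_{-p})_{-p}=X_0(15)$; the paper (Proposition~\ref{prop:shap}) takes $d=p$, so the extension is $\mathbb{Q}(\sqrt{p})$ and the twist is $(E_{-p})_p\cong E_{-1}$. This matters because the Selmer input you need, namely $\dim_{\mathbb{F}_2}\Sh(X_0(15)/\mathbb{Q}(\sqrt{-p}))[2]$, is \emph{not} computed anywhere in the paper. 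Proposition~\ref{prop:2seloverp}, which feeds the proof of Theorem~\ref{thm:mainrank}, computes $S^{2}(E_{-1}/\mathbb{Q}(\sqrt{p}))$ over the \emph{real} quadratic field, and there is no a priori reason this equals $S^{2}(X_0(15)/\mathbb{Q}(\sqrt{-p}))$. So your sentence ``The improved $2$-descent of $X_0(15)$ over $L$ from the proof of Theorem~\ref{thm:mainrank} shows $\dim_{\mathbb{F}_2}H[2]=\ldots$'' invokes a computation that does not exist.

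Your verification of condition~(2) is also a gap, as you yourself flag. The claim that $\ker(\mathrm{cor})$ sits in a ``$\sigma=-1$'' part controlled by $\Sh(X_0(15)/\mathbb{Q})[2]$ is not a general fact about corestriction on $2$-torsion; the paper instead checks condition~(2) concretely via the commutative square of Lemma~\ref{lem:normcor}, exhibiting the explicit Selmer elements over $\mathbb{Q}(\sqrt{p})$ whose norms hit the generators $(15,6,10)$ and $(-1,-1,1)$ of $\Sh(E_{-p}/\mathbb{Q})[2]$, and then using that source and target have equal $\mathbb{F}_2$-dimension to upgrade surjectivity to injectivity. If you switch to $d=p$ and $\mathbb{Q}(\sqrt{p})$, both of your missing pieces are supplied by Propositions~\ref{prop:basicrankandshas} (giving $\Sh(E_{-1}/\mathbb{Q})[2]=0$) and~\ref{prop:2seloverp}, and the rest of your argument (the formal manipulation of $G[4]/G[2]\cong H[2]$ and the case split) goes through verbatim and matches the paper's proof.
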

\begin{theorem}\label{thm:respq}
Let $p\equiv 17,113\bmod 120$ be a prime and let $E=X_0(15)_{-p}$. 
\begin{enumerate}[(i)]
    \item\label{thm:respq:i} If $[-1,2,p]=1$, then $\rank(E_{-p}/\mathbb{Q})=0$ and $\Sh(E_{-p}/\mathbb{Q})[4]\cong (\mathbb{Z}/2\mathbb{Z})^2$.
    \item\label{thm:respq:ii} If $[-1,2,p]=-1$, then $\Sh(E_{-p}/\mathbb{Q})[4]\xrightarrow{\cdot 2}\Sh(E_{-p}/\mathbb{Q})[2]$ is surjective.
    \item\label{thm:respq:iii} If $[-1,2,p]=-1$ and $\rank(E_{-p}/\mathbb{Q})=0$, then $\Sh(E_{-p}/\mathbb{Q})[4]\cong (\mathbb{Z}/4\mathbb{Z})^2$.
\end{enumerate}
\end{theorem}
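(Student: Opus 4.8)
The plan is to reduce all three assertions to a chain of $2$-descents tied together by Theorem \ref{thm:mainsha}, with the R\'edei symbol entering only through the local conditions of the last descent. Write $E=X_0(15)_{-p}$ for the curve under study (all three assertions concern $E$ itself). Since $X_0(15)[2]$ is rational and quadratic twisting does not change it, $E$ again has full rational $2$-torsion, so $\dim_{\mathbb{F}_2}E(\mathbb{Q})[2]=2$. For $p\equiv 17,113\bmod 120$ we have $p\equiv 1\bmod 8$ and $p\equiv 2,8\bmod 15$, so $p$ satisfies \eqref{eq:rootnumbercong} and the global root number over $\mathbb{Q}(\sqrt{-p})$ is $+1$, consistent with the rank lying in $\{0,2\}$.

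First I would perform a $2$-descent for $E$ over $\mathbb{Q}$, controlling the local images at the bad primes $\{2,3,5,p\}$ and at $\infty$, to show $\dim_{\mathbb{F}_2}\mathrm{Sel}_2(E/\mathbb{Q})=4$ for every such $p$. With full rational $2$-torsion the exact sequence $0\to E(\mathbb{Q})/2E(\mathbb{Q})\to\mathrm{Sel}_2(E/\mathbb{Q})\to\Sh(E/\mathbb{Q})[2]\to0$ forces $\rank(E/\mathbb{Q})+\dim_{\mathbb{F}_2}\Sh(E/\mathbb{Q})[2]=2$: either $\rank=0$ with $\Sh(E/\mathbb{Q})[2]\cong(\mathbb{Z}/2\mathbb{Z})^2$, or $\rank=2$ with $\Sh(E/\mathbb{Q})[2]=0$. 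Because the Cassels--Tate pairing on $\Sh(E/\mathbb{Q})[2^\infty]$ is alternating and nondegenerate, in the rank-$0$ case $\Sh(E/\mathbb{Q})[2^\infty]\cong(\mathbb{Z}/2^{e}\mathbb{Z})^2$ for a single exponent $e\ge1$; the entire problem is thus to decide between the two ranks and, when the rank is $0$, between $e=1$ and $e\ge2$.

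Next I would fix a suitable quadratic field $L=\mathbb{Q}(\sqrt d)$ adapted to the class of $p$ and apply Theorem \ref{thm:mainsha} to $E/\mathbb{Q}$ and $L$. Its first hypothesis, $\Sh(X_0(15)_{-pd}/\mathbb{Q})[2]=0$, I would verify by a direct $2$-descent on the twist $E_d=X_0(15)_{-pd}$ over $\mathbb{Q}$, choosing $d$ so that $-pd$ lands in a congruence regime where a plain $2$-descent already yields $\mathrm{Sel}_2(E_d/\mathbb{Q})=E_d(\mathbb{Q})[2]$; this simultaneously gives $\rank(E_d/\mathbb{Q})=0$. The second hypothesis, injectivity of corestriction on $\Sh[2]$, then follows from $\Sh(E_d/\mathbb{Q})[2]=0$ via the standard $\mathrm{Gal}(L/\mathbb{Q})$-comparison of $\Sh(E/L)[2]$ with $\Sh(E/\mathbb{Q})[2]$ and $\Sh(E_d/\mathbb{Q})[2]$. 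Theorem \ref{thm:mainsha} now yields $\Sh(E/\mathbb{Q})[2^\infty]/\Sh(E/\mathbb{Q})[2]\xrightarrow{\sim}\Sh(E/L)[2^\infty]$, so that $\dim_{\mathbb{F}_2}\Sh(E/L)[2]$ equals $0$ if $e=1$ and $2$ if $e\ge2$; moreover $\rank(E/L)=\rank(E/\mathbb{Q})$ since $\rank(E_d/\mathbb{Q})=0$.

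Finally I would run a $2$-descent for $E$ over $L$. As $E$ keeps full rational $2$-torsion over $L$, one computes $\dim_{\mathbb{F}_2}\mathrm{Sel}_2(E/L)=\rank(E/\mathbb{Q})+2+\dim_{\mathbb{F}_2}\Sh(E/L)[2]$, and the local images at the primes of $L$ above $2$ and above $p$ assemble, through the expression of the R\'edei symbol as a product of Hilbert symbols and the splitting criterion of \cite[\S7]{stevenhagen2021redei}, into $[-1,2,p]$. The key output should be $\dim_{\mathbb{F}_2}\mathrm{Sel}_2(E/L)=2$ when $[-1,2,p]=1$ and $=4$ when $[-1,2,p]=-1$. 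In the first case $\rank(E/L)=0$ and $\Sh(E/L)[2]=0$, hence $\rank(E/\mathbb{Q})=0$ and $\Sh(E/\mathbb{Q})[2^\infty]=\Sh(E/\mathbb{Q})[2]\cong(\mathbb{Z}/2\mathbb{Z})^2$, giving (i). In the second case either $\rank(E/\mathbb{Q})=2$, so $\Sh(E/\mathbb{Q})[4]=0$ and $\cdot2$ is vacuously surjective, or $\rank(E/\mathbb{Q})=0$ with $e\ge2$, so $\cdot2\colon\Sh(E/\mathbb{Q})[4]\to\Sh(E/\mathbb{Q})[2]$ is surjective; this proves (ii), and adjoining the hypothesis $\rank(E/\mathbb{Q})=0$ gives $\Sh(E/\mathbb{Q})[2^\infty]\cong(\mathbb{Z}/2^e\mathbb{Z})^2$ with $e\ge2$, whence $\Sh(E/\mathbb{Q})[4]\cong(\mathbb{Z}/4\mathbb{Z})^2$, which is (iii). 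The main obstacle is the arithmetic of this last descent: pinning the local condition at $p$ exactly to $[-1,2,p]$ and confirming the clean jump of $\dim_{\mathbb{F}_2}\mathrm{Sel}_2(E/L)$ from $2$ to $4$ (the intermediate value being excluded by the evenness of $\dim_{\mathbb{F}_2}\Sh(E/L)[2]$), together with verifying hypothesis (2) of Theorem \ref{thm:mainsha} uniformly in $p$.
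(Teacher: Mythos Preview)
Your overall architecture is right --- chain a rational $2$-descent on $E=X_0(15)_{-p}$ to Theorem~\ref{thm:mainsha}, then a $2$-descent over the quadratic field --- but the choice of $d$ you propose cannot work in the $p\equiv 17,113\bmod 120$ regime, and this is where the paper's proof differs essentially from yours.

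You want $d$ with $\rank(X_0(15)_{-pd}/\mathbb{Q})=0$ and $\Sh(X_0(15)_{-pd}/\mathbb{Q})[2]=0$. But once both hypotheses of Theorem~\ref{thm:mainsha} hold, the kernel of $\Sh(E/\mathbb{Q})\to\Sh(A/\mathbb{Q})$ is all of $\Sh(E/\mathbb{Q})[2]$, which (by \cite[Cor.~3.4]{Bruin_2004}, as the paper recalls) forces the Kummer image of $E_d(\mathbb{Q})$ to surject onto $\Sh(E/\mathbb{Q})[2]$. From Proposition~\ref{prop:2selrat} the two Selmer classes representing $\Sh(E/\mathbb{Q})[2]$ are $(1,2,2)$ and $(-1,-1,1)$. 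A rank-$0$ twist with torsion $(\mathbb{Z}/2\mathbb{Z})^2$ contributes only the classes $(1,-d,-d)$, $(d,-1,-d)$, $(d,d,1)$; together with the $E$-torsion classes $(-p,-p,1),(1,p,p)$ these never span both $(1,2,2)$ and $(-1,-1,1)$ for any $d$ (and the extra $4$-torsion in the $d=\pm1$ cases only produces $(15,6,10)$, which still misses $(1,2,2)$). The paper states this explicitly after the proof of Theorem~\ref{thm:mainsha}: for $p\equiv 17,113\bmod 120$ no rank-$0$ twist does the job. Its fix is to take an auxiliary prime $q$ (concretely $q=17$) with $q\equiv 17,113\bmod 120$ and $\rank(X_0(15)_{-q}/\mathbb{Q})=2$, set $d=pq$ so that $E_d\cong X_0(15)_{-q}$, and use the two independent \emph{non-torsion} points of $X_0(15)_{-17}(\mathbb{Q})$ to hit $(1,2,2)$ and $(-1,-1,1)$. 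Consequently $L=\mathbb{Q}(\sqrt{pq})$, the Selmer dimensions over $L$ are $4$ or $6$ (Proposition~\ref{prop:2seloverpq}) rather than your $2$ or $4$, and the R\'edei symbol enters as $[-1,2,pq]=[-1,2,p]\cdot[-1,2,q]$ with $[-1,2,17]=-1$ fixed.

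There is also a gap in your verification of hypothesis~(2). Injectivity of $\mathrm{cor}\colon\Sh(E/L)[2]\to\Sh(E/\mathbb{Q})[2]$ does not follow formally from $\Sh(E_d/\mathbb{Q})[2]=0$ via a ``standard $\mathrm{Gal}(L/\mathbb{Q})$-comparison''; the paper checks it case by case (Proposition~\ref{prop:shapq}) by computing that $\Sh(E/L)[2]$ and $\Sh(E/\mathbb{Q})[2]$ have equal $\mathbb{F}_2$-dimension and then exhibiting explicit norms from $S^2(E/L)$ onto the Selmer generators $(1,2,2)$ and $(-1,-1,1)$, using diagram~\eqref{eq:mainresdiag}. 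Your appeal to the Cassels--Tate pairing to get $\Sh[2^\infty]\cong(\mathbb{Z}/2^e\mathbb{Z})^2$ is fine but unnecessary: part~\ref{thm:respq:iii} follows directly from~\ref{thm:respq:ii} and $\Sh(E/\mathbb{Q})[2]\cong(\mathbb{Z}/2\mathbb{Z})^2$.
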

The structure of the paper is as follows. In section \ref{sec:2descentsection} we discuss the explicit method of $2$-descent we will use. Section \ref{sec:2selmergroupsection} covers all the $2$-Selmer group computations and provides a proof of Theorem \ref{thm:mainrank}. In section \ref{sec:shasection} we cover the visualisation arguments that enable us to prove Theorem \ref{thm:mainsha}, which combined with the $2$-Selmer group computations of section \ref{sec:2selmergroupsection} allows for a proof of Theorems \ref{thm:resp} and \ref{thm:respq}. Lastly, section \ref{sec:conclusionsection} has some closing remarks about higher descents, a discussion of the applicability of Theorem \ref{thm:mainsha} in general, and the rank of $X_0(15)$ over other families of imaginary quadratic fields like $\mathbb{Q}(\sqrt{-2p})$.

\section{2-descent}\label{sec:2descentsection}
Let $K$ be a number field and $A$ and Abelian variety over $K$. The Mordell-Weil theorem states that $A(K)$ is a finitely generated Abelian group, and the method of $2$-descent provides an upper bound on its rank $A(K)$. Especially in the case of elliptic curves, this is a well-known procedure, see for example \cite[Ch. X]{silverman2000}. The main reference of this section is \cite{schaefer1995}.\\

First some notation. The Galois cohomology groups $H^*(G_K,M)$, for $G_K$ the absolute Galois group of $K$ will be denoted by $H^*(K,M)$, and when $M=G(\overline{K})$ for a commutative group scheme $G$ over $K$ we will further abbreviate the notation to $H^*(K,G)$. For example if $E$ is an elliptic curve over $K$ we will write $H^1(K,E)$ and $H^1(K,E[2])$ for $H^1(G_K,E(\overline{K}))$ and $H^1(G_K,E(\overline{K})[2])$, respectively.\\

The short exact sequence $0\to A[2]\to A\xrightarrow{2}A\to 0$ gives through Galois cohomology the long exact sequence
\begin{equation}\label{eq:basiclongexactseq}
    0\to A(K)[2]\to A(K)\xrightarrow{2} A(K)\xrightarrow{\delta} H^1(K,A[2])\to H^1(K,A)\to H^1(K,A)\to\dotsb
\end{equation}
For a completion $K_{\mathfrak{p}}$, we can identify $G_{K_{\mathfrak{p}}}$ with a decomposition group $D_{\mathfrak{p}}\subset G_K$ over $\mathfrak{p}$, hence we have restriction maps $\mathrm{res}_{\mathfrak{p}}$ on cohomology\footnote{These do not depend on the choice of decomposition group as different choices differ by a conjugation map, which is trivial on cohomology.},
which form compatible maps with the long exact sequence \eqref{eq:basiclongexactseq} and its local version involving $A(K_{\mathfrak{p}})\xrightarrow{\delta_{\mathfrak{p}}}H^1(K_{\mathfrak{p}},A[2])$. We define
\[
\begin{split}
S^2(A/K)&:=\ker\left(H^1(K,A[2])\to\prod_{\mathfrak{p}}H^1(K_{\mathfrak{p}},A)\right),\\
\Sh(A/K)&:=\ker\left(H^1(K,A)\to\prod_{\mathfrak{p}}H^1(K_{\mathfrak{p}},A)\right).
\end{split}
\]
where $S^2(A/K)$ is the $2$-Selmer group of $A$, and $\Sh(A/K)$ the Tate-Shafarevich group of $A$,
which fit in the short exact sequence
\begin{equation}\label{eq:selexact}
0\to A(K)/2A(K)\to S^2(A/K)\to \Sh(A/K)[2]\to 0.
\end{equation}
Let $S$ be the set consisting of the primes of $K$ that are either infinite, lying over $2$, or are primes of bad reduction. Denote by
$H^1(K,A[2];S)$ the subgroup of $H^1(K,A[2])$ consisting of the cocycles unramified outside $S$. Then $H^1(K,A[2];S)$ is finite and contains $S^{2}(A/K)$. In fact
\begin{equation}\label{eq:selmerfinite}
    S^{2}(A/K)=\{\xi\in H^1(K,A[2];S):\mathrm{res}_{\mathfrak{p}}(\xi)\in\im(\delta_{\mathfrak{p}})\text{ for all }\mathfrak{p}\in S\},
\end{equation}
see \cite[Ch. I, Cor. 6.6]{milne2006}. In particular, since $A(K)/2A(K)$ has $\mathbb{F}_2$-dimension $\mathrm{rank}(A/K)+A(K)[2]$, we obtain from \eqref{eq:selexact} the identity
\begin{equation}\label{eq:selmerranksha}
\mathrm{rank}(A/K)+\dim_{\mathbb{F}_2}\Sh(A/K)[2]=\dim_{\mathbb{F}_2}S^{2}(A/K)-\dim_{\mathbb{F}_2}A(K)[2].
\end{equation}
The quantity $\dim_{\mathbb{F}_2}S^{2}(A/K)-\dim_{\mathbb{F}_2}A(K)[2]$, sometimes called the $2$-Selmer rank, provides an upper bound on $\mathrm{rank}(A/K)$, and the obstruction to its sharpness is measured by $\dim_{\mathbb{F}_2}\Sh(A/K)[2]$.\\

Now assume that $A=E$ is an elliptic curve given by the equation $y^2=f(x)$, for $f(x)\in K[x]$ monic of degree $3$. In order to compute the $2$-Selmer group $S^{2}(E/K)$, we make the cohomological setup more explicit. Consider the \'etale $K$-algebra $M_K=K[t]/f(t)$. The norm map $M_K\to K$ induces a map $M_K^*/M_K^{*2}\to K^{*}/K^{*2}$, of which we denote the kernel by $M_K'$.
\[
M_K':=\ker(M_K^*/M_K^{*2}\to K^{*}/K^{*2}),
\]
Then \cite[Thm. 1.1 \& 1.2]{schaefer1995} establishes an isomorphism $H^1(K,E[2])\cong M_K'$, under which the connecting homomorphism $E(K)\to H^1(K,E[2])$ translates to the map $E(K)\to M_K'$ determined by
\begin{equation}\label{eq:explicitkummer1}
(a,b)\mapsto a-t,\quad b\neq 0.
\end{equation}
This isomorphism $H^1(K,E[2])\cong M_K'$ also holds upon replacing $K$ with a completion $K_{\mathfrak{p}}$, and upon applying both isomorphisms, the restriction map $H^1(K,E[2])\to H^1(K_{\mathfrak{p}},E[2])$ corresponds to the natural map $M_K'\to M_{K_{\mathfrak{p}}}'$ induced by $K\to K_{\mathfrak{p}}$.\\
We will also rewrite this embedding according to how $M_K$ splits as a product of fields $K_i$, thus representing elements of $H^1(K,E[2])$ as tuples $(x_i)$, with $x_i\in K_i^{*}/K_i^{*2}$. Under this correspondence, elements in $H^1(K,E[2];S)$ correspond to those tuples $(x_i)$ for which $K_i(\sqrt{x_i})$ is unramified outside $S$ for all $i$.\\

Now suppose that $E(\overline{K})[2]=E(K)[2]$, that is $f(x)$ splits completely in $K[x]$, say as $f(x)=(x-e_1)(x-e_2)(x-e_3)$. Then $M_K\cong K\times K\times K$ via $t\mapsto (e_1,e_2,e_3)$, which induces an isomorphism $M_K'\cong \ker\left(\bigoplus_{i=1}^3K^{*}/K^{*2}\xrightarrow{N} K^{*}/K^{*2}\right)$. Here $N(x_1,x_2,x_3)=x_1x_2x_3$.
Also let $d\in K^{*}$ and consider the quadratic twist $E_{d}/K$ with equation $dy^2=(x-e_1)(x-e_2)(x-e_3)$. If $(x',y')=(dx,d^2y)$ we obtain the Weierstrass equation $(y')^2=(x'-de_1)(x'-de_3)(x'-de_3)$, of which the corresponding \'etale algebra is isomorphic to $M_K$, so we can translate the embedding of $E_{d}$ using the model $dy^2=(x-e_1)(x-e_2)(x-e_3)$. It is given as follows.
\begin{align}\label{eq:kummerexplicit}
\begin{split}
E_{d}(K)/2E_{d}(K)&\xrightarrow{\delta}\ker\left(\bigoplus_{i=1}^3K^{*}/K^{*2}\to K^{*}/K^{*2}\right),\\
(a,b)&\mapsto d(a-e_1,a-e_2,a-e_3),\quad\text{ if }b\neq 0\\
(e_1,0)&\mapsto (*,d(e_1-e_2),d(e_1-e_3)),\\
(e_2,0)&\mapsto (d(e_2-e_1),*,d(e_2-e_3)),\\
(e_3,0)&\mapsto (d(e_3-e_1),d(e_3-e_2),*).
\end{split}
\end{align}
The $*$-values are determined by other coordinates, for example the first coordinate of the image of $(e_1,0)$ is $(e_1-e_2)(e_1-e_3)$.
\begin{remark}\label{rem:squareremark}
    If $d\in K^*$ is a square, the explicit Kummer-embedding of $E_{d}$ from \eqref{eq:kummerexplicit} is identical to the one of $E$. In particular, the abstract isomorphism of Selmer groups of $E$ and $E_{d}$ induced by the obvious isomorphism $E\cong E_{d}$ becomes an actual equality in the explicit setup.
\end{remark}
We then have the following commutative diagram
\begin{equation}\label{eq:explicitkummer}
\begin{tikzcd}
E_{d}(K)/2E_{d}(K) \arrow[r, "\delta"] \arrow[d]                                    & \ker\left(\displaystyle\bigoplus_{i=1}^3K^*/K^{*2}\to K^*/K^{*2}\right) \arrow[d, "\iota_{\mathfrak{p}}"]                                                    \\
E_{d}(K_{\mathfrak{p}})/2E_{d}(K_{\mathfrak{p}}) \arrow[r, "\delta_{\mathfrak{p}}"] & \ker\left(\displaystyle\bigoplus_{i=1}^3K_{\mathfrak{p}}^*/K_{\mathfrak{p}}^{*2}\to K_{\mathfrak{p}}^*/K_{\mathfrak{p}}^{*2}\right),
\end{tikzcd}
\end{equation}
And we interpret the $2$-Selmer group $S^2(E/K)$ as a subgroup of the top right kernel in \eqref{eq:explicitkummer}.
For a finite set $S$ of primes of $K$, let $K(S)=\{x\in K^{*}/K^{*2}:v_{\mathfrak{p}}(x)\equiv 0\bmod 2\text{ for all finite }\mathfrak{p}\notin S\}$. Then \eqref{eq:selmerfinite} translates to
\[
S^{2}(E/K)=\left\{x\in\ker\left(\bigoplus_{i=1}^3K(S)\to K(S)\right):\iota_{\mathfrak{p}}(x)\in\mathrm{im}(\delta_{\mathfrak{p}})\text{ for all }\mathfrak{p}\in S\right\},
\]
where $S$ is the set consisting of finite primes of $K$ where $E$ has bad reduction, the finite primes over $2$ and the real primes of $K$, and $K(S)=\{x\in K^{*}/K^{*2}:v_{\mathfrak{p}}(x)\equiv 0\bmod 2\text{ for all finite }\mathfrak{p}\notin S\}$. For $x=(e_1,e_2,e_3)\in\ker\left(\bigoplus_{i=1}^3K^{*}/K^{*2}\to K^{*}/K^{*2}\right)$ we have $\iota_{\mathfrak{p}}(x)\in\mathrm{im}(\delta_{\mathfrak{p}})$ if and only if $v_{\mathfrak{p}}(e_i)\equiv 0\bmod 2$ for all $i$. Thus if we let $S$ be the set consisting of finite primes of $K$ where $E$ has bad reduction, the finite primes over $2$ and the real primes of $K$, we have
\[
S^{2}(E/K)=\left\{x\in\ker\left(\bigoplus_{i=1}^3K(S)\to K(S)\right):\iota_{\mathfrak{p}}(x)\in\mathrm{im}(\delta_{\mathfrak{p}})\text{ for all }\mathfrak{p}\in S\right\}.
\]
Note that local images at complex primes are trivial so we can omit those in $S$. The set $K(S)$ is finite: it fits in the short exact sequence
\[
0\to R_S^*/R_S^{*2}\to K(S)\to\mathrm{Cl}(R_S)[2]\to 0,
\]
where $R_S\subset K$ is the ring of $S$-integers. We can thus compute $S^{2}(E/K)$ provided that we have computed the local images $\im(\delta_{\mathfrak{p}})$ for $\mathfrak{p}\in S$, an $\mathbb{F}_2$-basis for $K(S)$, and for each basis element its image under the natural maps $K^{*}/K^{*2}\to K_{\mathfrak{p}}^*/K_{\mathfrak{p}}^{*2}$ for $\mathfrak{p}\in S$. To compute the local images we use the following Proposition.
\begin{proposition}\label{prop:dimlocalimage}
    Let $F$ be a finite extension of $\mathbb{Q}_p$ for some prime $p$ of $\mathbb{Q}$. Let $E$ be an elliptic curve over $F$. Then
    \[
    \mathrm{dim}_{\mathbb{F}_2}(E(F)/2E(F))=
    \begin{cases}
        \mathrm{dim}_{\mathbb{F}_2}E(F)[2]+[F:\mathbb{Q}_2] & \text{if } p=2,\\
        \mathrm{dim}_{\mathbb{F}_2}E(F)[2] & \text{if } 2<p<\infty,\\
        \mathrm{dim}_{\mathbb{F}_2}E(F)[2]-1 & \text{if } F=\mathbb{R},\\
        0 & \text{if } F=\mathbb{C}
    \end{cases}
    \]
\end{proposition}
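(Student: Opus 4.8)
The plan is to compute the dimension of the local Mordell–Weil group modulo $2$ from the exact sequence
\[
0\to E(F)[2]\to E(F)\xrightarrow{2}E(F)\to E(F)/2E(F)\to 0,
\]
by exploiting the fact that $E(F)$ is an almost-everywhere-profinite group whose structure as a topological abelian group is well understood. The key input is that over a $p$-adic field $F$ (so $[F:\mathbb{Q}_p]<\infty$), the group $E(F)$ contains a finite-index subgroup isomorphic to the additive group of the formal group, which by the theory of formal groups over local fields (see \cite[Ch.~IV, VII]{silverman2000}) is isomorphic as a topological group to $\mathcal{O}_F\cong\mathbb{Z}_p^{[F:\mathbb{Q}_p]}$. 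Concretely, $E(F)$ sits in a short exact sequence
\[
0\to E_1(F)\to E(F)\to \widetilde{E}_{\mathrm{ns}}(\mathbb{F}_q)\to 0
\]
with $E_1(F)\cong\widehat{E}(\mathfrak{m}_F)\cong\mathbb{Z}_p^{[F:\mathbb{Q}_p]}\times(\text{finite } p\text{-group})$, and $\widetilde{E}_{\mathrm{ns}}(\mathbb{F}_q)$ a finite group.

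First I would record the general principle: for any finitely generated $\mathbb{Z}_p$-module, or more precisely any group of the form $\mathbb{Z}_p^r\times T$ with $T$ finite, the map "multiplication by $2$" is an isomorphism on the $\mathbb{Z}_p^r$ part when $p\neq 2$ (since $2$ is a unit in $\mathbb{Z}_p$), so the cokernel $E(F)/2E(F)$ is isomorphic to $T/2T$ and hence, by the standard identity relating kernel and cokernel of multiplication-by-$n$ on a finite abelian group, has the same $\mathbb{F}_2$-dimension as $E(F)[2]$. When $p=2$ the $\mathbb{Z}_2^r$ part contributes a full $r=[F:\mathbb{Q}_2]$ to the dimension of the cokernel, since $\mathbb{Z}_2/2\mathbb{Z}_2\cong\mathbb{F}_2$ in each of the $[F:\mathbb{Q}_2]$ coordinates, on top of the torsion contribution which again matches $E(F)[2]$. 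This accounts for the first two cases.

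For the archimedean cases I would argue directly from the topology of $E(F)$ as a real or complex Lie group. Over $\mathbb{R}$ the group $E(\mathbb{R})$ is isomorphic as a Lie group to either $\mathbb{R}/\mathbb{Z}$ or $\mathbb{R}/\mathbb{Z}\times\mathbb{Z}/2\mathbb{Z}$, according to whether the discriminant is negative or positive, equivalently whether $E(\mathbb{R})[2]$ has dimension $1$ or $2$; in both cases multiplication by $2$ is surjective on the connected component $\mathbb{R}/\mathbb{Z}$, so $E(\mathbb{R})/2E(\mathbb{R})$ is trivial when there is one component and $\mathbb{Z}/2\mathbb{Z}$ when there are two. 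In either case $\dim_{\mathbb{F}_2}E(\mathbb{R})/2E(\mathbb{R})=\dim_{\mathbb{F}_2}E(\mathbb{R})[2]-1$. Over $\mathbb{C}$ the group $E(\mathbb{C})\cong(\mathbb{R}/\mathbb{Z})^2$ is divisible, so the cokernel vanishes, giving $0$.

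The main obstacle is bookkeeping the torsion contribution cleanly in the $p$-adic cases: one must verify that the finite group $T=E(F)[p^{\infty}]\times\widetilde{E}_{\mathrm{ns}}(\mathbb{F}_q)$ (more precisely the finite torsion quotient of $E(F)$) satisfies $\dim_{\mathbb{F}_2}T/2T=\dim_{\mathbb{F}_2}T[2]=\dim_{\mathbb{F}_2}E(F)[2]$. This follows because for any finite abelian group the kernel and cokernel of multiplication by $2$ are isomorphic (both being $\prod_i\mathbb{Z}/2$ over the cyclic factors of even order), and because the $\mathbb{Z}_p^r$-part carries no $2$-torsion when $p\neq 2$ and the mod-$2$ reduction of the free part is handled separately when $p=2$. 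Once this isomorphism is in place, assembling the four cases is immediate, and the cleanest exposition is simply to cite the structure theorem for $E(F)$ over local fields and then run the multiplication-by-$2$ computation factor by factor.
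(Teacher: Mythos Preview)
Your argument is correct and is the standard way to prove this result. The paper itself does not give a proof but simply refers to \cite[Lem.~4.4 \& Lem.~4.8]{stoll2001}, where the same structure-theoretic computation (finite-index $\mathbb{Z}_p^{[F:\mathbb{Q}_p]}$ inside $E(F)$, then snake-lemma bookkeeping for multiplication by $2$, and the Lie-group description over $\mathbb{R}$ and $\mathbb{C}$) is carried out.

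One small point of exposition: you do not actually need the splitting $E(F)\cong\mathbb{Z}_p^{d}\times T$; it is enough to have an open subgroup $U\cong\mathbb{Z}_p^{d}$ and apply the snake lemma to $0\to U\to E(F)\to E(F)/U\to 0$ under multiplication by $2$, using that $U[2]=0$, that $U/2U$ has dimension $0$ or $d$ according as $p\neq 2$ or $p=2$, and that kernel and cokernel of $\cdot 2$ on the finite quotient have equal order. This sidesteps the (true but slightly more delicate) claim that the extension splits, and also makes your parenthetical ``more precisely the finite torsion quotient'' unnecessary.
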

\begin{proof}
    See \cite[Lem. 4.4 \& Lem. 4.8]{stoll2001}.
\end{proof}

\section{$X_0(15)$ and 2-Selmer group computations}\label{sec:2selmergroupsection}
In this section we consider the rank of $X_0(15)$ over imaginary quadratic fields. After discussing the parity of the rank in relation to global root numbers, we discuss the torsion subgroup and some local images of an arbitrary quadratic twist $X_0(15)_d$. These local images are then used for various $2$-Selmer group computations, resulting in a proof of Theorem \ref{thm:mainrank}.

\begin{conjecture}[Parity conjecture]
    For an elliptic curve $E$ defined over a number field $K$ we have
    \[
    w(E/K)=(-1)^{\rank(E/K)}.
    \]
\end{conjecture}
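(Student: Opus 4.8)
The statement in question is an open conjecture rather than a theorem, and no unconditional proof is known in the generality asserted; the honest task is to explain how it reduces to the Birch and Swinnerton-Dyer conjecture and what can be salvaged unconditionally. The natural approach is deductive. The global root number $w(E/K)=\prod_v w_v(E/K)$ is \emph{defined} as a product of local root numbers over the places $v$ of $K$, each computable from the local Weil--Deligne representation attached to $E$, so it carries no analytic input. Granting that $L(E/K,s)$ has analytic continuation and satisfies its conjectural functional equation with centre at $s=1$, the sign occurring there is exactly $w(E/K)$; combining this with the rank prediction of Birch and Swinnerton-Dyer gives
\[
w(E/K)=(-1)^{\ord_{s=1}L(E/K,s)}\qquad\text{and}\qquad \ord_{s=1}L(E/K,s)=\rank(E/K),
\]
whence the desired equality $w(E/K)=(-1)^{\rank(E/K)}$.

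Carried out honestly, the argument requires two conjectural inputs. First I would invoke the automorphy of $E/K$ to secure analytic continuation and the functional equation; over $\mathbb{Q}$ this is the modularity theorem, and over imaginary quadratic fields it is precisely the kind of statement produced by Caraiani and Newton in the work cited in the introduction. Second I would need the equality of analytic and algebraic rank furnished by Birch and Swinnerton-Dyer. The main obstacle is unavoidably this second ingredient: since that conjecture remains open, the parity conjecture in full generality is beyond current methods, and a direct proof can only be conditional.

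What one \emph{can} prove unconditionally is the $p$-parity conjecture, in which $\rank(E/K)$ is replaced by the $\mathbb{Z}_p$-corank of the $p^\infty$-Selmer group. From the exact sequence
\[
0\to E(K)\otimes\mathbb{Q}_p/\mathbb{Z}_p\to \mathrm{Sel}_{p^\infty}(E/K)\to\Sh(E/K)[p^\infty]\to 0
\]
this corank equals $\rank(E/K)$ plus the corank of the divisible part of $\Sh(E/K)[p^\infty]$, which is even because the Cassels--Tate pairing induces a non-degenerate alternating form on the corresponding Tate module; hence the $p$-parity identity already yields $(-1)^{\rank(E/K)}=w(E/K)$. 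I would establish $p$-parity through the Iwasawa-theoretic machinery of Nekov\'a\v{r}'s Selmer complexes together with the regulator-constant techniques of T.\ and V.\ Dokchitser, which deliver it in wide generality and, over $\mathbb{Q}$, give the parity conjecture unconditionally. For the present paper none of this is strictly required: the conjecture enters only heuristically, predicting through the congruence \eqref{eq:rootnumbercong}---itself an unconditional local root-number computation---which discriminants ought to have even rank, whereas the actual rank bounds are obtained by the explicit $2$-descents of Section \ref{sec:2selmergroupsection}.
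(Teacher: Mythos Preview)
You are right that the paper offers no proof: the statement is labelled a \emph{Conjecture}, and the paper merely remarks that it is weaker than Birch--Swinnerton-Dyer and refers to \cite{tdokchitser2013} for its relation to other conjectures such as finiteness of $\Sh$. The conjecture is invoked only heuristically (via Corollary~\ref{conj:conjecturalparity}) to single out the congruence classes of $d$ worth investigating; every actual rank statement in the paper is obtained by the explicit descents of Section~\ref{sec:2selmergroupsection}, exactly as you say in your final paragraph.

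Your surrounding discussion is more detailed than anything the paper provides and is largely sound, but one step deserves correction. You assert that $p$-parity already yields the full parity conjecture because the Cassels--Tate pairing forces the $\mathbb{Z}_p$-corank of the divisible part of $\Sh(E/K)[p^\infty]$ to be even. This does not follow: the Cassels--Tate pairing has kernel precisely $\Sh_{\mathrm{div}}$ on each side, so it induces a non-degenerate alternating form on the finite quotient $\Sh[p^\infty]/\Sh[p^\infty]_{\mathrm{div}}$ but gives no information about the corank of $\Sh[p^\infty]_{\mathrm{div}}$ itself. Consequently, even over $\mathbb{Q}$, where $p$-parity is indeed a theorem for every $p$, the full parity conjecture is still only known modulo finiteness of $\Sh$---which is exactly the kind of auxiliary hypothesis the paper alludes to in its one-line gloss. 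None of this affects the paper's results, since parity is never used as an input to any proof.
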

Here $w(E/K)\in\{\pm 1\}$ is the global root number of $E/K$, which is the sign of the conjectural functional equation of the $L$-function of $E/K$. The parity conjecture is weaker than the Birch and Swinnerton-Dyer conjecture. We refer to \cite{tdokchitser2013} for an overview on how the parity conjecture relates to various other conjectures, like finiteness of $\Sh$.

Let us fix the notation $E=X_0(15)$ for the remainder of this section. A model for $E$ is given by $v^2+uv+v=u^3+u^2-10u-10$, see \cite[Elliptic curve 15.a5]{lmfdb}. The coordinate change $(x,y)=(4u,8v-4u-4)$ brings the equation in the more desirable form for the purpose of $2$-descent, yielding the equation
\[
E: y^2=(x+13)(x+4)(x-12).
\]
\begin{proposition}\label{prop:globalrootnumber}
    Let $d$ be positive square-free integer. Then
    \[
        w(E/\mathbb{Q}(\sqrt{-d}))=1\quad\Leftrightarrow\quad d\equiv 0,1,2,3,4,5,8,12\bmod 15.
    \]
\end{proposition}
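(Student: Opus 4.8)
The plan is to compute the global root number $w(E/\mathbb{Q}(\sqrt{-d}))$ by factoring it as a product of local root numbers, and to exploit the fact that base-changing to a quadratic field relates the root number over $\mathbb{Q}(\sqrt{-d})$ to root numbers of $E$ and its quadratic twist $E_{-d}$ over $\mathbb{Q}$. The key identity I would use is
\[
w(E/\mathbb{Q}(\sqrt{-d})) = w(E/\mathbb{Q})\cdot w(E_{-d}/\mathbb{Q}),
\]
which follows from the inductivity of root numbers in degree-$2$ inductions: the $L$-function of $E$ over $\mathbb{Q}(\sqrt{-d})$ factors as the product of the $L$-function of $E/\mathbb{Q}$ and that of $E/\mathbb{Q}$ twisted by the quadratic character $\chi_{-d}$ cutting out $\mathbb{Q}(\sqrt{-d})$, and the latter is exactly $L(E_{-d}/\mathbb{Q},s)$. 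Since $X_0(15)$ has rank $0$ and trivial parity obstruction over $\mathbb{Q}$, one checks that $w(E/\mathbb{Q})=+1$, so the problem reduces to determining $w(E_{-d}/\mathbb{Q})$.

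Next I would decompose $w(E_{-d}/\mathbb{Q}) = \prod_v w_v(E_{-d}/\mathbb{Q}_v)$ over all places $v$ and analyze each local factor. The conductor of $E=X_0(15)$ is $15=3\cdot 5$, so the bad primes for $E$ are $3$ and $5$; after twisting by $-d$, the relevant primes are $2,3,5$ together with the primes dividing $d$ and the archimedean place. For the archimedean place, $w_\infty(E_{-d}/\mathbb{Q})=-1$ always (standard for elliptic curves over $\mathbb{R}$). For primes $\ell \nmid 30d$ where $E_{-d}$ has good reduction the local factor is $+1$, so they contribute nothing. The main work is therefore at $v\in\{2,3,5\}$ and at the primes dividing $d$. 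At the primes $\ell\mid d$ with $\ell\nmid 30$, where $E$ has good reduction but the twist introduces additive reduction of a controlled type, the local root number is determined by the Jacobi/quadratic-residue symbol $\left(\tfrac{-1}{\ell}\right)$ or equivalently by $\ell \bmod 4$; collecting these over $\ell\mid d$ assembles into a congruence condition on $d$ modulo small integers. At $v=3$ and $v=5$ the curve $E$ already has (multiplicative) bad reduction, so I would use the known local root number of $X_0(15)$ at these primes together with the effect of twisting by $-d$, which depends on the class of $-d$ in $\mathbb{Q}_3^*/\mathbb{Q}_3^{*2}$ and $\mathbb{Q}_5^*/\mathbb{Q}_5^{*2}$ — i.e. on $d\bmod 3$ and the quadratic-residue behaviour of $-d$ mod $5$.

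Concretely, I would tabulate the local root number $w_\ell(E_{-d}/\mathbb{Q}_\ell)$ as a function of the relevant residue class of $d$ at each of $\ell=2,3,5$ and at each odd $\ell\mid d$, using the standard formulas (e.g. Rohrlich's tables, or Dokchitser–Dokchitser) for local root numbers of twisted elliptic curves in terms of Tamagawa-type data and quadratic symbols. Multiplying all local contributions and simplifying via quadratic reciprocity should collapse the archimedean $-1$, the contributions at $2,3,5$, and the product $\prod_{\ell\mid d}\left(\tfrac{-1}{\ell}\right)$ into a single condition on $d\bmod 15$ (the primes $3,5$ producing the mod-$15$ dependence, while the $2$-adic and sign contributions cancel into the overall parity). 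Matching this against the claimed list $d\equiv 0,1,2,3,4,5,8,12\bmod 15$ then finishes the proof; I would verify the eight residues by direct substitution, checking that these are exactly the classes for which the product equals $+1$.

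The main obstacle I expect is the careful bookkeeping at the bad primes $3$ and $5$ of $E$ itself: there the local root number of the twist is not simply a quadratic symbol but depends on the interaction between the existing multiplicative reduction and the ramification introduced by $-d$, so one must correctly track how the reduction type degenerates (split vs. non-split multiplicative, or additive) as $d$ varies mod $3$ and mod $5$. A secondary subtlety is ensuring the mod-$8$ contributions at $v=2$ genuinely cancel and do not introduce a residual dependence on $d\bmod 8$ — this is plausible because the final answer depends only on $d\bmod 15$, but it must be confirmed by the explicit $2$-adic computation rather than assumed.
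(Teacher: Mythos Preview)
Your approach via the factorisation $w(E/\mathbb{Q}(\sqrt{-d})) = w(E/\mathbb{Q})\cdot w(E_{-d}/\mathbb{Q})$ followed by a place-by-place analysis of the twist is sound in principle and would eventually give the result, but the paper takes a much shorter and cleaner route: it computes $w(E/K)$ directly over $K=\mathbb{Q}(\sqrt{-d})$, exploiting that $E=X_0(15)$ is semistable. Citing a theorem of Dokchitser, one has $w(E/K)=(-1)^{1+s}$ where $s$ is the number of finite primes of $K$ at which $E$ has split multiplicative reduction; since $E/\mathbb{Q}$ has non-split multiplicative reduction at $3$ and split multiplicative reduction at $5$, determining $s$ amounts only to the splitting behaviour of $3$ and $5$ in $K$, i.e.\ to the symbols $\left(\tfrac{-d}{3}\right)$ and $\left(\tfrac{-d}{5}\right)$, and the dependence on $d\bmod 15$ drops out immediately. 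The crucial gain is that over $K$ the curve never acquires additive reduction, so the two obstacles you correctly flag---the primes $\ell\mid d$ where $E_{-d}$ becomes additive, and the delicate $2$-adic factor (which is genuinely nontrivial: for instance $\mathbb{Q}_2(\sqrt{-1})/\mathbb{Q}_2$ is ramified, so even $E_{-1}$ has additive reduction at $2$)---simply do not appear and need not be shown to cancel via reciprocity. Your method is the general-purpose machine that works without any semistability hypothesis, but here it forgoes precisely the shortcut that semistability affords.
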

\begin{proof}
    Let $K=\mathbb{Q}(\sqrt{-d})$. As $E/\mathbb{Q}$ is semistable, \cite[Thm. 3]{vdokchitser2005} implies that
    \[
        w(E/K)=(-1)^{1+s},
    \]
    where $s$ is the number of finite primes of $K$ at which $E$ has split multiplicative reduction. Over the rationals, $E$ has non-split multiplicative reduction at $3$, split multiplicative reduction at $5$, and good reduction elsewhere. We see that a prime of $K$ over $3$ has split multiplicative reduction precisely when the residue field degree is $2$, i.e. when $3$ is inert in $K$, in which case there is one such prime. As any prime of $K$ over $5$ has split multiplicative reduction, we have
    \begin{align*}
        \#&\text{split multiplicative primes of }K\text{ over } 3 =
        \begin{cases}
            1, & \text{ if } \left(\tfrac{-d}{3}\right)=-1 \\
            0, & \text{otherwise}
        \end{cases}\\
        \#&\text{split multiplicative primes of }K\text{ over } 5 =
        \begin{cases}
            2, & \text{ if } \left(\tfrac{-d}{5}\right)=1 \\
            1, & \text{otherwise}
        \end{cases}
    \end{align*}
The result follows by going over the cases for $d\bmod 15$.
\end{proof}
The parity conjecture together with Proposition \ref{prop:globalrootnumber} thus immediately yields the following.
\begin{corollary}\label{conj:conjecturalparity}
    Let $d$ be a positive square-free integer and assume the parity conjecture. Then
    \[
    \rank(E/\mathbb{Q}(\sqrt{-d}))\equiv 0\bmod 2\quad\Leftrightarrow\quad d\equiv 0,1,2,3,4,5,8,12\bmod 15.
    \]
\end{corollary}
In particular, assuming the parity conjecture, we see that $E$ has non-zero rank over $\mathbb{Q}(\sqrt{-d})$ when $d\equiv 6,7,9,10,11,13,14\bmod 15$, so \cite[Thm. 1.1]{caraianinewton2023modularity} cannot be used to prove modularity of elliptic curves over $\mathbb{Q}(\sqrt{-d})$ for those $d$. For this reason we restrict to the congruence conditions of Corollary \ref{conj:conjecturalparity} whenever we attempt to prove rank $0$ for a specific $d$. \\

\noindent Let $d\in\mathbb{Q}^{*}$ be a non-square. Then we have the well-known relation
\[
\rank(E/\mathbb{Q})+\rank(E_d/\mathbb{Q})=\rank(E/\mathbb{Q}(\sqrt{d})),
\]
corresponding to a decomposition of $E(\mathbb{Q}(\sqrt{d}))\otimes \mathbb{Q}$ into eigenspaces for the action of $\gal(\mathbb{Q}(\sqrt{d})/\mathbb{Q})$, which we will use freely throughout the rest of the paper. In section \ref{sec:shasection} we will also encounter a proof, see Proposition \ref{prop:rankidentitytwist}. In Proposition \ref{prop:basicrankandshas} we will see that $\rank(E/\mathbb{Q})=0$, so $\rank(E_d/\mathbb{Q})=\rank(E/\mathbb{Q}(\sqrt{d}))$. It follows that we can obtain upper bounds on $\rank(E/\mathbb{Q}(\sqrt{d}))$ by doing a $2$-Selmer group computation of $E_{d}/\mathbb{Q}$.
We start by computing the torsion subgroup and the local images over $2,3,5$ and $\infty$ for any such twist. Then we focus on the main object of interest, the twist $E_{-p}$ for a prime $p$.\\

For a number field $K$ and $d\in K^{*}$, set $e_1=-13$, $e_2=-4$ and $e_3=12$ as in the explicit $2$-descent case of section \ref{sec:2descentsection} corresponding to rational $2$-torsion. Then the Kummer-embedding for $E_d$ as in \eqref{eq:explicitkummer} is given as follows.
\begin{align}\label{eq:kummerx015}
\begin{split}
E_{d}(K)/2E_{d}(K)&\xrightarrow{\delta}\ker\left(\bigoplus_{i=1}^3K^{*}/K^{*2}\to K^{*}/K^{*2}\right),\\
(a,b)&\mapsto d(a+13,a+4,a-12),\quad\text{ if }b\neq 0\\
(-13,0)&\mapsto (1,-d,-d),\\
(-4,0)&\mapsto (d,-1,-d),\\
(12,0)&\mapsto (d,d,1).
\end{split}
\end{align}

\begin{proposition}\label{prop:torsiontwists}
    Let $d\in\mathbb{Z}$ be square-free. Then
    \[
    E_d(\mathbb{Q})_{\mathrm{tors}}\cong
    \begin{dcases*}
        \mathbb{Z}/4\mathbb{Z}\times\mathbb{Z}/2\mathbb{Z} & if $d\in\{1,-1\}$ \\
        \mathbb{Z}/2\mathbb{Z}\times\mathbb{Z}/2\mathbb{Z} & otherwise
    \end{dcases*}
    \]
The points of order $4$ are $(-8,\pm 20)$ and $(32,\pm 180)$ when $d=1$, and $(2,\pm 30)$ and $(-28,120)$ when $d=-1$.
\end{proposition}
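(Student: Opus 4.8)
The plan is to determine the torsion by first fixing the full rational $2$-torsion, then invoking Mazur's theorem to restrict the possibilities, and finally pinning down the exact structure through a $2$-divisibility computation on the $2$-torsion together with a separate exclusion of $3$-torsion.

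First, for every square-free $d$ the three points $(-13,0),(-4,0),(12,0)$ remain rational on the model $dy^2=(x+13)(x+4)(x-12)$, so $E_d(\mathbb{Q})[2]=(\mathbb{Z}/2\mathbb{Z})^2$ and in particular $(\mathbb{Z}/2\mathbb{Z})^2\subseteq E_d(\mathbb{Q})_{\mathrm{tors}}$. By Mazur's classification the torsion subgroup must then be of the form $\mathbb{Z}/2\mathbb{Z}\times\mathbb{Z}/2n\mathbb{Z}$ with $n\in\{1,2,3,4\}$, so it remains only to determine the $2$-power part and to exclude a factor of $3$.

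For the $2$-power part I would read off directly from the explicit Kummer embedding \eqref{eq:kummerx015} when a nontrivial $2$-torsion point lies in $2E_d(\mathbb{Q})$: by exactness of \eqref{eq:basiclongexactseq} a point $T$ lies in $2E_d(\mathbb{Q})$ precisely when $\delta(T)$ is trivial in $\bigoplus_i\mathbb{Q}^*/\mathbb{Q}^{*2}$. Since $\delta(12,0)=(d,d,1)$, $\delta(-13,0)=(1,-d,-d)$, $\delta(-4,0)=(d,-1,-d)$, and $d$ is square-free, the first is trivial iff $d=1$, the second iff $d=-1$, and the third never. Hence $E_d(\mathbb{Q})$ contains a point of order $4$ if and only if $d\in\{1,-1\}$; for all other $d$ there is none, and as $E_d(\mathbb{Q})[2]=(\mathbb{Z}/2\mathbb{Z})^2$ the $2$-power torsion is then exactly $(\mathbb{Z}/2\mathbb{Z})^2$. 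For $d=\pm1$ I would exhibit the points listed in the statement and check that each doubles to the relevant $2$-torsion point, giving $\mathbb{Z}/4\mathbb{Z}\times\mathbb{Z}/2\mathbb{Z}\subseteq E_{\pm1}(\mathbb{Q})$; a point of order $8$ is excluded by reducing each of these two specific curves modulo a prime $\ell$ of good reduction and noting $8\nmid\#E_{\pm1}(\mathbb{F}_\ell)$ (equivalently, by citing that $X_0(15)$ has torsion $\mathbb{Z}/4\mathbb{Z}\times\mathbb{Z}/2\mathbb{Z}$).

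Finally, to rule out a factor of $3$ uniformly in $d$, I would use that $E_d\cong E$ over $\mathbb{Q}(\sqrt{d})$ twists the Galois action by $-1$: a point of order $3$ in $E_d(\mathbb{Q})$ corresponds to a $3$-torsion point $Q$ of $E$ with $\sigma Q=-Q$ for the nontrivial $\sigma\in\gal(\mathbb{Q}(\sqrt{d})/\mathbb{Q})$, which forces $x(Q)\in\mathbb{Q}$. Thus $3$-torsion can appear for some $d$ only if the $3$-division polynomial of $E$ has a rational root, i.e. only if $E$ admits a rational cyclic subgroup of order $3$; I would verify this fails (the quartic $\psi_3$ has no rational root, equivalently $X_0(15)$ has no rational $3$-isogeny), which excludes $n=3$ for every $d$ and completes the determination. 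The main obstacle is exactly this last step: making the passage to a rational root of $\psi_3$ rigorous and checking its absence, since this is the point at which a naive argument would degenerate into an unwieldy case analysis over all $d$.
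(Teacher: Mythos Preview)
Your argument is correct, modulo one small slip: to exclude a rational point of order $8$ on $E_{\pm 1}$ via reduction you need $16\nmid\#E_{\pm1}(\mathbb{F}_\ell)$ rather than $8\nmid\#E_{\pm1}(\mathbb{F}_\ell)$, since the subgroup $\mathbb{Z}/4\mathbb{Z}\times\mathbb{Z}/2\mathbb{Z}$ you have already exhibited forces $8\mid\#E_{\pm1}(\mathbb{F}_\ell)$ at every prime of good reduction.

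Your route differs from the paper's in how the torsion order is bounded. The paper does not invoke Mazur and does not touch the $3$-division polynomial; instead it reduces modulo $7$ and $11$ over the ramified local extensions $\mathbb{Q}_\ell(\sqrt{\ell})$, where $\ell$ becomes a square so that every twist $E_d$ acquires good reduction simultaneously and only two reductions over $\mathbb{F}_\ell$ need to be counted. Both counts at $\ell=7$ give $8$ points, capping $\#E_d(\mathbb{Q})_{\mathrm{tors}}$ at $8$ and killing all $p$-torsion for $p\neq 2,7$ uniformly in $d$; the count at $\ell=11$ then kills $7$-torsion. Your approach replaces this uniform reduction trick with the much heavier Mazur classification together with the single check that $\psi_3$ has no rational root---which the paper in fact explicitly acknowledges as a valid alternative (``crack a nut with a sledgehammer'') immediately after its own proof. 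The Kummer-embedding step that detects exactly when a nontrivial $2$-torsion point lies in $2E_d(\mathbb{Q})$ is the same in both arguments.
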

\begin{proof}
    The twist $E_{d}$ has good reduction over $\mathbb{Q}_7(\sqrt{7})$, so we have injections
    \[
    E_{d}(\mathbb{Q})[m]\xhookrightarrow{}E_{d}(\mathbb{Q}(\sqrt{7}))[m]\xhookrightarrow{}\tilde{E_{d}}(\mathbb{F}_7)
    \]
    for any positive integer $m$ coprime to $7$. Here the reduction $\tilde{E_{d}}$ is with respect to $\mathbb{Q}_7(\sqrt{7})$. There are two options for this reduction, depending on the class of $d$ in $F^{*}/F^{*2}$ for $F=\mathbb{Q}_7(\sqrt{7})$. In both cases we have $\#\tilde{E_{d}}(\mathbb{F}_7)=8$, which implies that $\#E_{d}(\mathbb{Q})[2^{\infty}]\leq 8$ and $E_{d}(\mathbb{Q})[p]=0$ for all primes $p\neq 2,7$. Similarly, $E_{d}$ has good reduction over $\mathbb{Q}_{11}(\sqrt{11})$ and both possibilities for the reduction have exactly $16$ rational points, so we can also conclude that $E_{d}(\mathbb{Q})[7]=0$. It follows that $\#E_{d}(\mathbb{Q})_{\mathrm{tors}}\leq 8$.\\
    As $E_{d}(\mathbb{Q})[2]\cong\mathbb{Z}/2\mathbb{Z}\times\mathbb{Z}/2\mathbb{Z}$ the torsion subgroup $E(\mathbb{Q})_{\mathrm{tors}}$ is isomorphic to either $\mathbb{Z}/4\mathbb{Z}\times\mathbb{Z}/2\mathbb{Z}$ or $\mathbb{Z}/2\mathbb{Z}\times\mathbb{Z}/2\mathbb{Z}$. The latter case occurs precisely when $E_{d}(\mathbb{Q})[2]\to E_{d}(\mathbb{Q})/2E_{d}(\mathbb{Q})$ is injective, which because of \eqref{eq:kummerx015} is equivalent to $(1,-d,-d)$ and $(d,d,1)$ being linearly independent in $\bigoplus_{i=1}^3\mathbb{Q}^*/\mathbb{Q}^{*2}$, which is the case precisely when $d\neq \pm 1$.\\
    Finally, when $d=1$ we see that $(-8,\pm 20)$ and $(32,\pm 180)$ are indeed points on the curve, which have order $4$ as their doubles equal $(12,0)$. Similarly the doubles of the rational points $(2,\pm 30)$ and $(-28,120)$ on $E_{-1}$ equal $(-13,0)$.
\end{proof}
If one wishes to crack a nut with a sledgehammer, one can also appeal to Mazur's theorem on torsion subgroups over $\mathbb{Q}$ as an alternative to the reduction arguments in the proof of Proposition \ref{prop:torsiontwists}. Then \eqref{eq:kummerx015} is also used to exclude the $\mathbb{Z}/8\mathbb{Z}\times\mathbb{Z}/2\mathbb{Z}$ case.\\

\noindent Let us now prepare for various $2$-Selmer group computations by providing all possible images of the map
\[
E_{d}(\mathbb{Q}_p)/2E_{d}(\mathbb{Q}_p)\xrightarrow{\delta_p}\ker\left(\bigoplus_{i=1}^3\mathbb{Q}_p^{*}/\mathbb{Q}_p^{*2}\to \mathbb{Q}_p^{*}/\mathbb{Q}_p^{*2}\right)
\]
for $p=2,3,5,\infty$. From \eqref{eq:kummerx015} we see that the local image $\im(\delta_p)$ depends only on the class of $d$ in $\mathbb{Q}_p^{*}/\mathbb{Q}_p^{*2}$. We have $\mathbb{Q}_2^*/\mathbb{Q}_2^{*2}=\langle 2,-1,3\rangle$, $\mathbb{Q}_3^*/\mathbb{Q}_3^{*2}=\langle 3,-1\rangle$, $\mathbb{Q}_5^*/\mathbb{Q}_5^{*2}=\langle 5,2\rangle$ and $\mathbb{R}^*/\mathbb{R}^{*2}=\langle -1\rangle$. The local images over $2$ then have the following $\mathbb{F}_2$-bases.
\begin{equation}\label{eq:localimages2}
\begin{gathered}
    \begin{array}{|c|c|c|c|c|}
    \hline
    d\in\mathbb{Q}_2^*/\mathbb{Q}_2^{*2} & 1 & -1 & 3 & -3 \\
    \hline
    \multirow{3}{*}{$\im(\delta_2)$}  & (1,-1,-1) & (-1,-1,1) & (1,-3,-3) & (1,3,3)\\
     & (1,3,3) & (1,-3,-3) & (3,3,1) & (-3,-3,1)\\
     & (-3,1,-3) & (1,2,2) & (1,6,6) & (1,-1,-1)\\
     \hhline{|=|=|=|=|=|}
    d\in\mathbb{Q}_2^*/\mathbb{Q}_2^{*2} & 2 & -2 & 6 & -6 \\
    \hline
    \multirow{3}{*}{$\im(\delta_2)$} & (1,-2,-2) & (1,2,2) & (1,-3,-3) & (1,-1,-1) \\
     & (2,2,1) & (-2,-2,1) & (1,-2,-2) & (1,6,6) \\
     & (1,-1,-1) & (1,3,3) & (6,6,1) & (-6,-6,1) \\
    \hline
    \end{array}
\end{gathered}
\end{equation}
The other images are as follows.
\begin{equation}\label{eq:localimagesrest}
\begin{gathered}
    \begin{array}{|c|c|c|c|c|}
    \hline
    d\in\mathbb{Q}_3^*/\mathbb{Q}_3^{*2} & 1 & -1 & 3 & -3 \\
    \hline
    \multirow{2}{*}{$\im(\delta_3)$} &(1,-1,-1) & (-1,-1,1) & (1,-3,-3) & (1,3,3) \\
     & (-1,-1,1) & (-3,-3,1) & (3,3,1) & (-3,-3,1) \\
    \hline
    \end{array}
\\
    \begin{array}{|c|c|c|c|c|}
    \hline
    d\in\mathbb{Q}_5^*/\mathbb{Q}_5^{*2} & 1 & 2 & 5 & 10 \\
    \hline
    \multirow{2}{*}{$\im(\delta_5)$} & (5,1,5) & (1,2,2) & (1,5,5) & (1,10,10)\\
     & (2,1,2) & (2,2,1) & (5,5,1) & (10,10,1) \\
    \hline
    \end{array}
\\
    \begin{array}{|c|c|c|c|c|}
    \hline
    d\in\mathbb{R}^*/\mathbb{R}^{*2} & 1 & -1 \\
    \hline
    \text{}\im(\delta_{\infty}) & (1,-1,-1) & (-1,-1,1)\\
    \hline
    \end{array}
\end{gathered}
\end{equation}
    For the justification of these local images, observe that Proposition \ref{prop:dimlocalimage} gives us the $\mathbb{F}_2$-dimensions, and one sees quickly that the given elements are linearly independent in all cases, so it remains to show that they are actually in the respective local images. The $2$-dimensional image of $E_{d}(\mathbb{Q})_{\mathrm{tors}}$ yields points in the local images as well, which depending on $p$ and $d$ have to be supplemented by additional local points. It turns out these can be found by taking for $x$ an integer $-6\leq x\leq 9$. For example when $p=2$ and $d=1$, we have the elements $(1,-1,-1)$ and $(-3,1,-3)$ from the global torsion (the global $4$-torsion point $(32,180)$ has image $(5,1,5)$), and for $x=8$ we have in $\mathbb{Q}_2^*/\mathbb{Q}_2^{*2}$
    \[
    (x+13)(x+4)(x-12)=21\cdot 12\cdot -4=1,
    \]
    so we obtain a point $(8,?)\in E(\mathbb{Q}_2)$, which has image $(-3,3,-1)$. The other cases are left to the reader.
\begin{proposition}\label{prop:basicrankandshas}
    For $d\in\{1,-1,-2,-3,-5\}$ we have
    \[
    \rank(E_{d}/\mathbb{Q})=0,\quad\text{ and }\quad\Sh(E_{d}/\mathbb{Q})[2]=0.
    \]
\end{proposition}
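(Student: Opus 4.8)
The plan is to reduce everything to a single $2$-Selmer group dimension count. By Proposition \ref{prop:torsiontwists} every twist in question has $E_d(\mathbb{Q})[2]\cong(\mathbb{Z}/2\mathbb{Z})^2$, so $\dim_{\mathbb{F}_2}E_d(\mathbb{Q})[2]=2$ for all $d\in\{1,-1,-2,-3,-5\}$. Feeding this into the identity \eqref{eq:selmerranksha} gives
\[
\rank(E_d/\mathbb{Q})+\dim_{\mathbb{F}_2}\Sh(E_d/\mathbb{Q})[2]=\dim_{\mathbb{F}_2}S^2(E_d/\mathbb{Q})-2.
\]
Since $E_d(\mathbb{Q})/2E_d(\mathbb{Q})\hookrightarrow S^2(E_d/\mathbb{Q})$ by \eqref{eq:selexact} and the left-hand group already has dimension $\rank(E_d/\mathbb{Q})+2\geq 2$, we automatically have $\dim_{\mathbb{F}_2}S^2(E_d/\mathbb{Q})\geq 2$. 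Hence it suffices to prove the reverse inequality $\dim_{\mathbb{F}_2}S^2(E_d/\mathbb{Q})\leq 2$: once $\dim_{\mathbb{F}_2}S^2(E_d/\mathbb{Q})=2$ is established, the displayed identity forces both $\rank(E_d/\mathbb{Q})=0$ and $\Sh(E_d/\mathbb{Q})[2]=0$ at once, as the two summands on the left are non-negative.

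To bound the Selmer group I would use the explicit description from the end of Section \ref{sec:2descentsection}. The curve $X_0(15)$ has bad reduction only at $3$ and $5$, and twisting by $d\in\{-1,-2,-3,-5\}$ together with the prime over $2$ that always belongs to the descent set yields $S=\{2,3,5,\infty\}$ in every case (for $d=1$ the prime $2$ is included since it lies over $2$, even though $E$ has good reduction there). Because $\mathbb{Q}$ has class number $1$, the group $\mathbb{Q}(S)$ is freely generated modulo squares by $-1,2,3,5$, so $\dim_{\mathbb{F}_2}\mathbb{Q}(S)=4$ and the norm-one subgroup $\ker\!\big(\bigoplus_{i=1}^3\mathbb{Q}(S)\xrightarrow{N}\mathbb{Q}(S)\big)$ has dimension $3\cdot 4-4=8$. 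The Selmer group is cut out of this $8$-dimensional space by the local conditions $\iota_p(x)\in\im(\delta_p)$ for $p\in\{2,3,5,\infty\}$.

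These local images are precisely the ones tabulated in \eqref{eq:localimages2} and \eqref{eq:localimagesrest}: for each $d$ and each $p$ one first records the class of $d$ in $\mathbb{Q}_p^*/\mathbb{Q}_p^{*2}$ and then reads off the corresponding $\im(\delta_p)$. Concretely this means computing, for each of the five values of $d$, classes such as $-5\equiv 3$ in $\mathbb{Q}_2^*/\mathbb{Q}_2^{*2}$, $-2\equiv 1$ in $\mathbb{Q}_3^*/\mathbb{Q}_3^{*2}$, and so on, then expressing the restriction maps $\mathbb{Q}(S)\to\mathbb{Q}_p^*/\mathbb{Q}_p^{*2}$ on the basis $-1,2,3,5$. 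The Selmer condition thereby becomes a finite system of $\mathbb{F}_2$-linear equations on the $8$-dimensional kernel. A dimension check organises the work: using Proposition \ref{prop:dimlocalimage} for the values of $\dim\im(\delta_p)$, the local images have codimensions $6-3=3$, $4-2=2$, $4-2=2$ and $2-1=1$ inside the respective local norm-one groups, for a total imposed codimension of $8$. Were these conditions fully independent the Selmer dimension would drop to $0$; the content of the proposition is that for these specific twists the global obstruction keeps the kernel from collapsing, and the computation confirms it is exactly $2$-dimensional.

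The main obstacle is purely the bookkeeping. One must carry out the $\mathbb{F}_2$-linear algebra for all five twists, tracking the image of each generator of $\mathbb{Q}(S)$ in each $\mathbb{Q}_p^*/\mathbb{Q}_p^{*2}$ and intersecting with the tabulated local images. The computation is routine but error-prone, and the crucial point in each case is to verify that the resulting kernel has dimension exactly $2$ and no larger, so that it matches the lower bound coming from $E_d(\mathbb{Q})[2]$. This pins down $\dim_{\mathbb{F}_2}S^2(E_d/\mathbb{Q})=2$ and, via the reduction in the first paragraph, completes the proof.
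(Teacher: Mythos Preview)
Your proposal is correct and follows essentially the same approach as the paper: both reduce the statement to showing $\dim_{\mathbb{F}_2}S^2(E_d/\mathbb{Q})=2$ via \eqref{eq:selmerranksha}, take $S=\{2,3,5,\infty\}$ with $\mathbb{Q}(S)=\langle -1,2,3,5\rangle$, and cut down the $8$-dimensional norm-one group by the tabulated local images \eqref{eq:localimages2}--\eqref{eq:localimagesrest}. The paper carries out the linear algebra explicitly for $d=1$ and $d=-1$ and leaves the remaining three cases to the reader, whereas you describe the setup and codimension heuristic without executing any case; the actual verification is indeed just the bookkeeping you flag.
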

\begin{proof}
    From \eqref{eq:selmerranksha} we have
    \[
    \rank(E_{d}/\mathbb{Q})+\dim_{\mathbb{F}_2}\Sh(E_{d}/\mathbb{Q})[2]= \mathrm{dim}_{\mathbb{F}_2}S^2(E_{d}/\mathbb{Q})-2,
    \]
    so the result follows by proving that $\mathrm{dim}_{\mathbb{F}_2}S^2(E_{d}/\mathbb{Q})=2$. The twist $E_{d}$ has good reduction at primes $>5$, and bad (in fact multiplicative) reduction at $3$ and $5$, so we take $S=\{2,3,5,\infty\}$. Then $\mathbb{Q}(S)=\langle -1,2,3,5\rangle\subset\mathbb{Q}^*/\mathbb{Q}^{*2}$. Recall that $\delta$ and the local versions $\delta_p$ for $p\in S$ fit in the commutative diagram
    \[
\begin{tikzcd}
E_{d}(\mathbb{Q})/2E_{d}(\mathbb{Q}) \arrow[r, "\delta"] \arrow[d]                                    & \ker\left(\displaystyle\bigoplus_{i=1}^3\mathbb{Q}^*/\mathbb{Q}^{*2}\to \mathbb{Q}^*/\mathbb{Q}^{*2}\right) \arrow[d, "\iota_p"]                                                    \\
E_{d}(\mathbb{Q}_p)/2E_{d}(\mathbb{Q}_p) \arrow[r, "\delta_p"] & \ker\left(\displaystyle\bigoplus_{i=1}^3\mathbb{Q}_p^*/\mathbb{Q}_p^{*2}\to \mathbb{Q}_p^*/\mathbb{Q}_p^{*2}\right),
\end{tikzcd}
    \]
    and that
    \[
        S^2(E_{d}/\mathbb{Q})=\left\{x\in\ker\left(\bigoplus_{i=1}^3\mathbb{Q}(S)\to\mathbb{Q}^*/\mathbb{Q}^{*2}\right):\iota_p(x)\in\im(\delta_p),\,\forall p\in S\right\}.
    \]
    Consider the case $d=1$ and let $x=(x_1,x_2,x_3)\in S^2(E/\mathbb{Q})$. Then from \eqref{eq:localimages2} we see that $\iota_2(x)\in\im(\delta_2)$ implies that $x_i\in\langle -1,3,5\rangle$ for all $i$. Similarly, considering the $3$-adic image $\im(\delta_3)$ implies $x_i\in\langle -1,2,5\rangle$ for all $i$, hence for each $i$ we have $x_i\in\langle -1,3,5\rangle\cap\langle -1,2,5\rangle=\langle -1,5\rangle$. As $5=-3$ in $\mathbb{Q}_2^{*}/\mathbb{Q}_2^{*2}$, the $2$-adic image implies $x_1\in\langle 5\rangle$, and from the $5$-adic image we deduce $x_2\in\langle -1\rangle$. This leaves $4$ options in total for $x$, proving that indeed $\mathrm{dim}_{\mathbb{F}_2}S^2(E/\mathbb{Q})=2$.\\
    Now suppose that $d=-1$. Then in $S^2(E_{-1}/\mathbb{Q})$ we have the $2$-dimensional subspace generated by $(-1,-1,1)$ and $(15,6,10)$, coming from $E_{-1}(\mathbb{Q}_{\mathrm{tors}})$. This subspace injects onto the subspace of the $2$-adic image spanned by $(-1,-1,1)$ and $(1,-6,-6)$, hence we have $S^2(E_{-1}/\mathbb{Q})=H\oplus \langle (-1,-1,1),(15,6,10)\rangle$ for
    \[
    H=\{x\in S^2(E_{-1}/\mathbb{Q}):\iota_2(x)\in\langle (1,2,2)\rangle\}.
    \]
    Let $x=(x_1,x_2,x_3)\in H$. Then $x_3\in\langle 10\rangle$ by the $3$-adic and real image, which combined with the $2$-adic image forces $x_3=1$. Then $x$ has trivial $2$-adic image, which combined with the $5$-adic image implies $x_2=1$, hence $H=0$ and $S^2(E_{-1}/\mathbb{Q})$ is $2$-dimensional as desired.\\
    The remaining cases are similar and are left to the reader.
\end{proof}
We now focus on when the rank of $E$ over $\mathbb{Q}(\sqrt{-p})$ can be $0$, where $p$ is some prime. In accordance with Corollary \ref{conj:conjecturalparity} we restrict attention to the case $p\equiv 1,2,3,4,5,8\bmod 15$ (no prime can be $\equiv 0,12\bmod 15$), for otherwise we expect an odd, hence non-zero rank. From the $d=1$ case of Proposition \ref{prop:basicrankandshas} we have
\begin{equation}\label{eq:rankrel-p}
\rank(E/\mathbb{Q}(\sqrt{-p}))=\rank(E_{-p}/\mathbb{Q}).
\end{equation}
Proposition \ref{prop:basicrankandshas} thus implies that $\rank(E/\mathbb{Q}(\sqrt{-p}))=0$ for $p\leq 5$, so assume $p>5$. This leaves the possible congruences $p\equiv 1,2,4,8\bmod 15$.

\begin{proposition}\label{prop:2selrat}
    Let $p\equiv 1,2,4,8\bmod 15$ be a prime $>5$. Then the image of $E_{-p}(\mathbb{Q})[2]$ gives two linearly independent elements $(-p,-p,1)$ and $(1,p,p)$ in $S^{2}(E_{-p}/\mathbb{Q})$. The dimension and additional generators for $S^{2}(E_{-p}/\mathbb{Q})$ depend on $p\bmod 8$ and $p\bmod 15$ as follows.
    \[
    \begin{array}{c|c|c|c}
      p \bmod 8 & p\bmod 15  & \mathrm{dim}_{\mathbb{F}_2}S^{2}(E_{-p}/\mathbb{Q}) & \text{additional generators}\\
      \hline
      1 & 1,4 & 4 & (15,6,10),(-1,-1,1)\\
      1 & 2,8 & 4 & (1,2,2),(-1,-1,1)\\
      \text{not } 1 & \text{any} & 2 & \text{none}
    \end{array}
    \]
\end{proposition}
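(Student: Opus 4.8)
The plan is to apply the explicit recipe for $S^2(E_{-p}/\mathbb{Q})$ from the end of Section \ref{sec:2descentsection}. Since $E$ has good reduction away from $3,5$ and twisting by $-p$ can only add bad reduction at $2$ and $p$, we may take $S=\{2,3,5,p,\infty\}$, so $\mathbb{Q}(S)=\langle -1,2,3,5,p\rangle$. The classes $(-p,-p,1)$ and $(1,p,p)$ in \eqref{eq:kummerx015} are images of the $2$-torsion points $(12,0)$ and $(-13,0)$, hence lie in the Selmer group; their product is $(-p,-1,p)$, the image of $(-4,0)$, which is nontrivial, so they are independent and $\dim_{\mathbb{F}_2}S^2(E_{-p}/\mathbb{Q})\geq 2$. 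The task is to pin down the remaining classes.

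The first step is the local image at the new prime $p$. As $p>5$ the full $2$-torsion of $E_{-p}$ is defined over $\mathbb{Q}_p$, so Proposition \ref{prop:dimlocalimage} gives $\dim_{\mathbb{F}_2}\im(\delta_p)=2$; the two torsion images above are already independent in it, whence $\im(\delta_p)=\langle (1,p,p),(-p,-p,1)\rangle$. Writing a class as $x_i=y_i p^{a_i}$ with $y_i\in\langle -1,2,3,5\rangle$ and $a_i\in\{0,1\}$, the condition $\iota_p(x)\in\im(\delta_p)$ unwinds to: $(a_1,a_2,a_3)$ of even weight (automatic from the kernel relation), $\bigl(\tfrac{y_3}{p}\bigr)=1$, and $\bigl(\tfrac{y_1}{p}\bigr)=\bigl(\tfrac{y_2}{p}\bigr)=\bigl(\tfrac{-1}{p}\bigr)^{a_1}$. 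By quadratic reciprocity each of $\bigl(\tfrac{-1}{p}\bigr),\bigl(\tfrac{2}{p}\bigr),\bigl(\tfrac{3}{p}\bigr),\bigl(\tfrac{5}{p}\bigr)$ depends only on $p\bmod 8$ and $p\bmod 15$, which is precisely why the outcome is governed by these two congruences.

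Next I would reduce the rank count to a single finite group. The two torsion classes realise the $a$-patterns $(0,1,1)$ and $(1,1,0)$, which span all even-weight vectors; since the identity is the only torsion class with trivial $p$-part, every Selmer class is congruent modulo torsion to one with $a=(0,0,0)$, and
\[
\dim_{\mathbb{F}_2}S^2(E_{-p}/\mathbb{Q})=2+\dim_{\mathbb{F}_2}T,
\]
where $T$ consists of the $y\in\langle -1,2,3,5\rangle^3$ with $\prod_i y_i\in\mathbb{Q}^{*2}$, with $\iota_\ell(y)\in\im(\delta_\ell)$ for $\ell=2,3,5,\infty$, and with $\bigl(\tfrac{y_i}{p}\bigr)=1$ for all $i$ (the $p$-adic condition in the case $a_1=0$). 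The images $\im(\delta_\ell)$ are read from \eqref{eq:localimages2}--\eqref{eq:localimagesrest} using the class of $-p$ in each $\mathbb{Q}_\ell$, again determined by $p\bmod 8$ and $p\bmod 15$.

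It remains to compute $T$ in the relevant congruence classes. When $p\equiv 1\bmod 8$ and $p\equiv 1,4\bmod 15$, all of $-1,2,3,5$ are squares modulo $p$, so the condition $\bigl(\tfrac{y_i}{p}\bigr)=1$ is vacuous and the classes of $-p$ at $2,3,5,\infty$ coincide with those of $-1$; thus $T=S^2(E_{-1}/\mathbb{Q})$ for $S=\{2,3,5,\infty\}$, which is $2$-dimensional with generators $(-1,-1,1),(15,6,10)$ by the proof of Proposition \ref{prop:basicrankandshas}. When $p\equiv 1\bmod 8$ and $p\equiv 2,8\bmod 15$, one has $\bigl(\tfrac{3}{p}\bigr)=\bigl(\tfrac{5}{p}\bigr)=-1$, forcing $y_i\in\langle -1,2,15\rangle$; the $3$-adic image then removes the $15$-component and a short check leaves $T=\langle (-1,-1,1),(1,2,2)\rangle$. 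Both give $\dim S^2=4$ with the asserted extra generators. The main obstacle is the branch $p\not\equiv 1\bmod 8$, where one must show $T=0$: here the intersection of the $2$-adic image with $\ker\bigl(\tfrac{\cdot}{p}\bigr)^3$ is the decisive constraint, and the delicate point is to carry out the case distinction over $p\bmod 8\in\{3,5,7\}$ and $p\bmod 15$ so that the vanishing of $T$ comes out uniformly.
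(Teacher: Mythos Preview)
Your approach is correct and is essentially the paper's: the complement $T$ you isolate is exactly the paper's $H=\{x\in S^{2}(E_{-p}/\mathbb{Q}):\iota_p(x)=1\}$, and both proofs then cut $H$ down using the tables \eqref{eq:localimages2}--\eqref{eq:localimagesrest}. Your identification $T=S^{2}(E_{-1}/\mathbb{Q})$ in the case $p\equiv 1\bmod 8$, $p\equiv 1,4\bmod 15$ is a clean shortcut the paper does not make explicit (it redoes that local computation), and your handling of the $p\equiv 2,8\bmod 15$ case matches the paper's Case~2. For the branch $p\not\equiv 1\bmod 8$ that you flag as the remaining obstacle, the paper does precisely the case split you anticipate: it uses the $2$-adic image (which now differs from the $d=-1$ column) to force $e_3=1$, then $e_2=1$, with one extra use of the $p$-adic constraint when $p\equiv 7\bmod 8$ to kill the candidate $e_2=-1$; no new idea is needed beyond what you have set up.
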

\begin{proof}
    We take $S=\{2,3,5,p,\infty\}$. Then $\mathbb{Q}(S)=\langle -1,2,3,5,p\rangle$. Besides the local images over $2,3,5$ and $\infty$ from \eqref{eq:localimages2} and \eqref{eq:localimagesrest} we also need the image at $p$. We have $\mathbb{Q}_p^{*}/\mathbb{Q}_p^{*2}=\langle r,p\rangle$ with $r\in\mathbb{Z}$ a non-square mod $p$, which we will take to be $-1$ when $p\equiv 3\bmod 4$. The image comes from the $2$-torsion and is given as follows depending on $p\bmod 4$.
    \[
    \im(\delta_p):
    \begin{array}{|c|c|}
    \hline
    p\equiv 1\bmod 4 & p\equiv 3\bmod 4\\
    \hline
    (p,p,1) & (-p,-p,1) \\
    (1,p,p) & (1,p,p) \\
    \hline
    \end{array}
    \]
    Let $H=\{x\in S^{2}(E/K):\iota_p(x)=1\}$. As $\iota_p$ maps the $\delta$-image of $E_{-p}(\mathbb{Q})[2]$ onto the $p$-adic image, we have
    \[
    S^{2}(E/K)=H \oplus \delta(E_{-p}(\mathbb{Q})[2]).
    \]
    \textbf{Case 1:} $p\equiv 1,4\bmod 15$.\\
    In this case, the images of the generators for $\mathbb{Q}(S)$ under the maps $\mathbb{Q}^{*}/\mathbb{Q}^{*2}\to\mathbb{Q}_q^{*}/\mathbb{Q}_q^{*2}$ for $q\in S$, which we will sometimes write as $\im_q(x)$ for $x\in\mathbb{Q}(S)$, are given as follows
    \[
    \begin{array}{c|ccccc}
    & 2 & 3 & 5 & p & \infty \\
    \hline
    -1 & -1 & -1 & 1 &  & -1 \\
    2 & 2 & -1 & 2 &  & 1 \\
    3 & 3 & 3 & 2 &  & 1 \\
    5 & -3 & -1 & 5 & 1 & 1 \\
    p &  & 1 & 1 & p & 1
    \end{array}
    \]
    Here the empty entries depend on $p\equiv 1\bmod 8$. Let $x=(e_1,e_2,e_3)\in H$. Then $e_i\in\langle -1,2,3,5\rangle$ for all $i$ as $\iota_p(x)=1$. Since $\im_3(e_3)$ and $\im_{\infty}(e_3)$ are both trivial, we get $e_3\in\langle 10\rangle$. As $\im_5(e_2)=1$ we have $e_2\in\langle -1,6\rangle$.\\
    If $p\not\equiv 1\bmod 8$, we see that $\im_2(e_3)=-6$ cannot occur, hence $e_3=1$, which when $p\equiv 3,5\bmod 8$ implies $e_2=1$ by considering the $2$-adic image. When $p\equiv 7\mod 8$ considering the $2$-adic image yields $e_2\in\langle -1\rangle$, but $e_2=-1$ cannot happen as it doesn't have trivial $p$-adic image. We conclude that $H=0$ when $p\not\equiv 1\bmod 8$.\\
    If $p\equiv 1\bmod 8$, then one checks that $(15,6,10)\in H$, so we obtain a complement for $\langle (15,6,10)\rangle$ in $H$ by setting $e_3=1$. Then the $2$-adic image of $x$ is contained in $\langle (-1,-1,1)\rangle$, so $e_2\in\langle -1\rangle$. This leaves two options, and as indeed $(-1,-1,1)\in H$, it follows that $H$ is $2$-dimensional and generated by $(15,6,10)$ and $(-1,-1,1)$.\\

    \textbf{Case 2:} $p\equiv 2,8\bmod 15$.\\
    In this case we take $\mathbb{Q}_p^*/\mathbb{Q}_p^{*2}=\langle 3,p\rangle$, and the table of local images of the generators for $\mathbb{Q}(S)$ is then as follows.
    \[
    \begin{array}{c|ccccc}
    & 2 & 3 & 5 & p & \infty \\
    \hline
    -1 & -1 & -1 & 1 &  & -1 \\
    2 & 2 & -1 & 2 &  & 1 \\
    3 & 3 & 3 & 2 &  & 1 \\
    5 & -3 & -1 & 5 & 3 & 1 \\
    p & & -1 & 2 & p & 1
    \end{array}
    \]
    Let $x=(e_1,e_2,e_3)\in H$. Then the $q$-adic images for $q=3,5,p$ imply $e_i\in\langle -1,2\rangle$ for all $i$. By the real image also $e_3\in\langle 2\rangle$. When $p\equiv 3,5\bmod 8$, the $2$-adic image implies $e_3=1$, and then also $e_2=1$ so $H=0$. When $p\equiv 7\bmod 8$ we get $e_3=1$ and $e_2\in\langle -1\rangle$ from the $2$-adic image. But $-1$ has non-trivial $p$-adic image, hence $e_2=1$. It follows that $H=0$ when $p\not\equiv 1\bmod 8$.\\
    When $p\equiv 1\bmod 8$ we see that $(1,2,2)\in H$ so we have a complement for $\langle (1,2,2)\rangle$ in $H$ by setting $e_3=1$. Then $e_2\in\langle -1\rangle$ by the $2$-adic image, and since $(-1,-1,1)\in H$, it follows that $H$ is $2$-dimensional and generated by $(1,2,2)$ and $(-1,-1,1)$.
\end{proof}
It follows that the rank of $E$ over $\mathbb{Q}(\sqrt{-p})$ is at most $2$ when $p\equiv 1\bmod 8$, and equal to $0$ when $p\not\equiv 1\bmod 8$. In particular, combined with Proposition \ref{prop:basicrankandshas} this settles the case $(i)$ of Theorem \ref{thm:mainrank}.\\
We will now focus on the goal of improving the rank bound when $p\equiv 1\bmod 8$. To this end, observe that for a square-free integer $d\neq -p$, the twists $E_{-p}$ and $E_{d}$ are quadratic twists of each other by $-dp$, hence
\begin{equation}\label{eq:rankformulatwist}
\rank(E_{-p}/\mathbb{Q})+\rank(E_{d}/\mathbb{Q})=\rank\left(E_{d}/\mathbb{Q}(\sqrt{-dp})\right).
\end{equation}
By choosing $d$ appropriately, a $2$-Selmer group computation of $E_{d}$ over $\mathbb{Q}(\sqrt{-dp})$ can improve the rank bound from Proposition \ref{prop:2selrat} for some primes $p\equiv 1\bmod 8$. In fact, choosing $d=-1$ will settle case $(ii)$ of Theorem \ref{thm:mainrank}, while $d=-17$ settles case $(iii)$. In section \ref{sec:shasection} these choices for $d$ are clarified.

The remaining $2$-Selmer group computations will make use of R\'edei symbols as in \cite{stevenhagen2021redei}. Let us for convenience extend the definition of minimal ramification as follows. In the notation of \cite[Def. 7.6]{stevenhagen2021redei} we say simply that $L$ is minimally ramified if $K\subset L$ is minimally ramified over $E$, which causes no confusion as $K$ and $E$ are determined by $L$. Additionally, we say that both $\mathbb{Q}(\sqrt{a},\sqrt{\beta})$ and $\mathbb{Q}(\sqrt{b},\sqrt{\beta})$ are minimally ramified if their normal closure over $\mathbb{Q}$, which is $L$, is minimally ramified. 
\begin{proposition}\label{prop:2seloverp}
    Let $p\equiv 1,49\bmod 120$ be a prime. Then
    \[
    \mathrm{dim}_{\mathbb{F}_2}S^{2}(E_{-1}/\mathbb{Q}(\sqrt{p}))=
    \begin{cases*}
        4 & if\, $[-1,10,p]=1$ \\
        2 & if\, $[-1,10,p]=-1$
    \end{cases*}
    \]
\end{proposition}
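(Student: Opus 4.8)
The plan is to run the explicit $2$-descent of Section \ref{sec:2descentsection} directly over $K=\mathbb{Q}(\sqrt{p})$, using the Kummer map \eqref{eq:kummerx015} with $d=-1$. First I would record the decomposition of the relevant rational primes in $K$. Since $p\equiv 1\bmod 8$ the prime $2$ splits; since $p\equiv 1,4\bmod 15$ forces $p\equiv 1\bmod 3$, the prime $3$ splits; and since $p\equiv 1,4\bmod 5$ is a quadratic residue, $5$ splits; the prime $p$ ramifies and $K$ has two real places. Accordingly I would take $S$ to consist of the two places above each of $2,3,5$ together with the two infinite places (six finite and two infinite places). A useful observation for organising the computation is that the image of $E_{-1}(K)[2]$ under $\delta$ is only the single class $(-1,-1,1)$, because the rational $4$-torsion point $(2,\pm 30)$ makes $(-13,0)=2(2,30)$ a double; however $\delta$ applied to all of $E_{-1}(K)_{\mathrm{tors}}$ produces the $2$-dimensional space $\langle(-1,-1,1),(15,6,10)\rangle$, which is the part of $S^2(E_{-1}/K)$ that is always present.

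Next I would pin down $K(S)=\{x\in K^*/K^{*2}: v_{\mathfrak q}(x)\equiv 0\bmod 2 \text{ for } \mathfrak q\notin S\}$. Because $p\equiv 1\bmod 4$ is prime, genus theory gives that the class number of $K$ is odd, so $\mathrm{Cl}(K)[2]=0$ and hence $\mathrm{Cl}(R_S)[2]=0$; the exact sequence $0\to R_S^*/R_S^{*2}\to K(S)\to\mathrm{Cl}(R_S)[2]\to 0$ then yields $K(S)=R_S^*/R_S^{*2}$. The Dirichlet $S$-unit theorem gives $\dim_{\mathbb{F}_2}K(S)=1+7=8$, and I would fix an explicit basis $\{-1,\varepsilon,2,3,5,\alpha_2,\alpha_3,\alpha_5\}$, where $\varepsilon$ is the fundamental unit (of norm $-1$) and $\alpha_\ell$ is an $S$-unit of odd valuation at exactly one of the two primes above $\ell$; such $\alpha_\ell$ exist precisely because $\mathrm{Cl}(R_S)$ has odd order. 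The ambient group $\ker(\bigoplus_{i=1}^3 K(S)\to K(S))$ then has dimension $16$.

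The local conditions are imposed place by place. Since $2,3,5$ all split, each completion $K_{\mathfrak q}$ with $\mathfrak q\mid 2,3,5$ is $\mathbb{Q}_2,\mathbb{Q}_3$ or $\mathbb{Q}_5$, so I can reuse the tabulated images $\im(\delta_\ell)$ for $E_{-1}$ from \eqref{eq:localimages2} and \eqref{eq:localimagesrest} (noting that $-1$ is already a square in $\mathbb{Q}_5$, so the $5$-adic condition is the untwisted one), with one condition per embedding and hence two per split prime; the two real places each contribute the image $\langle(-1,-1,1)\rangle$. Cutting the $16$-dimensional ambient space by these conditions is a finite linear-algebra problem in which the conditions above $3$ eliminate the ``rational'' excess, while the residual dependence is concentrated at the primes above $2$ and $5$.

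The hard part, and the source of the dichotomy in the statement, will be to show that the surviving space has dimension $2$ or $4$ according to a single $\pm 1$ invariant attached to the local square classes at $2$ and $5$ of the new generators $\varepsilon,\alpha_2,\alpha_5$. I expect this invariant to be a product of Hilbert/Legendre symbols that, after applying quadratic reciprocity and identifying the local behaviour of the fundamental unit, collapses exactly to the R\'edei symbol $[-1,10,p]$; the appearance of $10=2\cdot 5$ and the twisting factor $-1$ mirrors precisely which local conditions remain non-redundant, and matches the governing $C_4/D_4$-extension $\mathbb{Q}(\sqrt{3+\sqrt{-1}})$ (with $N_{\mathbb{Q}(\sqrt{-1})/\mathbb{Q}}(3+\sqrt{-1})=10$). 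Establishing this identification rigorously is the crux; once it is done, the cases $[-1,10,p]=1$ and $[-1,10,p]=-1$ give a $2$-dimensional extra contribution and none, yielding $\dim_{\mathbb{F}_2}S^2(E_{-1}/K)=4$ and $2$ respectively.
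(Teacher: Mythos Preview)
Your overall plan coincides with the paper's: a $2$-descent for $E_{-1}$ over $K=\mathbb{Q}(\sqrt{p})$ with $S$ consisting of the two primes over each of $2,3,5$ and the two real places, the same local images borrowed from the $d=-1$ rows of \eqref{eq:localimages2}--\eqref{eq:localimagesrest}, the observation that $\mathrm{Cl}(K)[2]=0$ forces $K(S)=R_S^*/R_S^{*2}$ of $\mathbb{F}_2$-dimension $8$, and the $2$-dimensional torsion contribution $\langle(-1,-1,1),(15,6,10)\rangle$ that is always present.

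Where you diverge is in the basis of $K(S)$, and this is exactly the step you flag as ``the crux''. The paper does \emph{not} use $\{-1,\varepsilon,2,3,5,\alpha_2,\alpha_3,\alpha_5\}$. Instead, for each $d\in\{-1,2,3,5\}$ it takes $x_d\in K^*$ with $N_{K/\mathbb{Q}}(x_d)\in d\,\mathbb{Q}^{*2}$ such that $K(\sqrt{x_d})/K$ is \emph{minimally ramified} in the sense of \cite{stevenhagen2021redei}, together with its Galois conjugate $y_d$, and uses $\{x_d,y_d:d=-1,2,3,5\}$ as the basis. This choice is the whole point: the R\'edei symbol $[p,-1,d]$ is \emph{defined} through the splitting in exactly such a minimally ramified extension, so once the linear algebra reduces the Selmer dimension to the question of whether $\im_{\mathfrak p_2}(x_{-1})$ and $\im_{\mathfrak p_5}(x_{-1})$ are trivial, the answer is literally $[p,-1,2]$ and $[p,-1,5]$, whence $[p,-1,2][p,-1,5]=[p,-1,10]=[-1,10,p]$ by trilinearity and reciprocity. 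Along the way the paper also uses the twisting subgroups $T_{p,d}$ to normalise the $x_d$, and Hilbert product formulas $\prod_{\mathfrak p}(x_a,y_b)_{\mathfrak p}=1$ to cut down the undetermined entries in the local-image table before the case split.

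With your basis the identification is not automatic: the fundamental unit $\varepsilon$ need not produce a minimally ramified $K(\sqrt{\varepsilon})$, so its square classes at $\mathfrak p_2,\mathfrak p_5$ are not a priori R\'edei symbols, and your $\alpha_\ell$ carry valuation data that muddies the local tables. To make the final identification you would have to twist $\varepsilon$ and the $\alpha_\ell$ into minimally ramified representatives anyway, at which point you have reproduced the paper's basis. So the plan is sound, but the part you defer is precisely where the paper's choice of generators does the real work.
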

\begin{proof}
Let $K=\mathbb{Q}(\sqrt{p})$. Let $S$ consist of the primes of $K$ lying over $30$ together with the real primes. Then $|S|=8$, and the completions at the primes in $S$ are given by $\mathbb{Q}_2,\mathbb{Q}_3,\mathbb{Q}_5$ and $\mathbb{R}$, hence we can take the local images from the $d=-1$ cases in \eqref{eq:localimages2} and \eqref{eq:localimagesrest}. They are given as follows.
    \[
    \begin{array}{|c|}
        \hline
        \im(\delta_2) \\
        \hline
        (-1,-1,1)  \\
        (1,-6,-6) \\
        (1,2,2)  \\
        \hline
    \end{array}
    \,\,
    \begin{array}{|c|}
        \hline
        \im(\delta_3) \\
        \hline
        (-1,-1,1) \\
        (3,3,1) \\
        \hline
    \end{array}
    \,\,
    \begin{array}{|c|}
        \hline
        \im(\delta_5) \\
        \hline
        (2,1,2) \\
        (5,1,5) \\
        \hline
    \end{array}
    \,\,
    \begin{array}{|c|}
        \hline
        \im(\delta_{\infty}) \\
        \hline
        (-1,-1,1) \\
        \hline
    \end{array}
    \]\\
    We have the short exact sequence
    \[
    0\to R_S^*/R_S^{*2}\to K(S)\to \mathrm{Cl}(R_S)[2]\to 0,
    \]
    where $R_S=\{x\in K:v_{\mathfrak{p}}(x)\geq 0\text{ for all finite }\mathfrak{p}\in S\}$ is the ring of $S$-integers. Since $\mathrm{Cl}(R_S)$ is a quotient of $\mathrm{Cl}_K$ and $\mathrm{Cl}_K[2]=0$, we have $\mathrm{Cl}(R_S)[2]=0$. The finitely generated abelian group $R_S^{*}$ has rank $7$ and cyclic torsion subgroup of even order, so it follows that $R_S^*/R_S^{*2}$, and hence also $K(S)$, has $\mathbb{F}_2$-dimension $8$.\\
    To find a convenient basis for $K(S)$, observe that for $d\in\{-1,2,3,5\}$ we have $(p,d)_q=1$ for all primes $q$, hence there exists $x_d\in K$ with $N(x_d)\in d\mathbb{Q}^{*2}$ such that $K(\sqrt{x_d})/K$ is minimally ramified. Also denote by $y_d$ the conjugate of $x_d$. Let
    \[
    H:=\langle x_{-1},y_{-1},x_2,y_2,x_3,y_3,x_5,y_5\rangle\subset K^{*}/K^{*2}.
    \]
    The minimal ramification of the extensions $K(\sqrt{x_d})/K$ implies $H\subset K(S)$. In fact equality holds because the norm-induced map $H\to\mathbb{Q}^{*}/\mathbb{Q}^{*2}$ has kernel and image of dimension $4$, so $H$ and $K(S)$ have the same $\mathbb{F}_2$-dimension.
    
    For each $d\in\{-1,2,3,5\}$, we may replace both $x_d$ and $y_d$ with $tx_d$ and $ty_d$, respectively, for some $t$ in the twisting subgroup $T_{p,d}\subset\mathbb{Q}^{*}/\mathbb{Q}^{*2}$ as in \cite[Eq. (43) and Lemma 7.7]{stevenhagen2021redei}. This doesn't change the fact that they form a basis for $K(S)$. These subgroups are given as follows
    \[
    \begin{array}{c|c}
        d & T_{p,d} \\
        \hline
        -1 & \langle -1,p\rangle \\
        2 & \langle 2,p\rangle \\
        3 & \langle -1,-3,p \rangle \\
        5 & \langle 5,p \rangle
    \end{array}
    \]
    We now make various choices, using up part of the elbow room provided by the twisting subgroups. \\
    Let $\mathfrak{p}_{\infty}:K\to\mathbb{R}$ be the infinite prime such that $\mathfrak{p}_{\infty}(x_{-1})>0$ and let $\mathfrak{q}_{\infty}$ be the other infinite prime. Let $\mathfrak{p}_2$ be the prime of $K$ over $2$ that is unramified in $K(\sqrt{x_{-1}})$ and let $\mathfrak{q}_2$ be the conjugate of $\mathfrak{p}_2$. Twist $x_2$ by $2$ if necessary such that $\mathfrak{p}_2$ is unramified in $K(\sqrt{x_2})$. For $r\in\{3,5\}$, let $\mathfrak{p}_r$ be the prime of $K$ over $r$ that splits in $K(\sqrt{x_2})$ and let $\mathfrak{q}_r$ be the other prime over $r$. Twist $x_r$ by $r^{*}$ if necessary such that $\mathfrak{p}_r$ is unramified in $K(\sqrt{x_r})$. Finally, twist $x_3$ by $-1$ if necessary such that $\mathfrak{p}_2$ is unramified in $K(\sqrt{x_3})$. For $d\in\{3,5\}$, we get from $\prod_{\mathfrak{p}}(x_2,y_d)_{\mathfrak{p}}=1$ that $\im_{\mathfrak{p}_2}(x_d)=1$. We now have the following table, where the entries in a column corresponding to a prime $\mathfrak{p}\in S$ are the images of the generators of $K(S)$ under the map $K(S)\to K_{\mathfrak{p}}^{*}/K_{\mathfrak{p}}^{*2}$. We will often write $\im_{\mathfrak{p}}(x)$ for the image of $x\in K(S)$ under this map.
    \begin{equation}\label{2sel17table}
    \begin{array}{c|cccccccc}
    & \mathfrak{p}_2 & \mathfrak{q}_2 & \mathfrak{p}_3 & \mathfrak{q}_3 & \mathfrak{p}_5 & \mathfrak{q}_5 & \mathfrak{p}_{\infty} & \mathfrak{q}_{\infty} \\
    \hline
    x_{-1} &\cellcolor[gray]{0.8} & &\cellcolor[gray]{0.8} & & \cellcolor[gray]{0.8} & & 1 & -1 \\
    y_{-1} & & & & & & & -1 & 1 \\
    x_2 & \cellcolor[gray]{0.8}& & 1 & -1 & 1 & 2 &\cellcolor[gray]{0.8} & \\
    y_2 & & & -1 & 1 & 2 & 1 & & \\
    x_3 & 1 & 3 &\cellcolor[gray]{0.8} & &\cellcolor[gray]{0.8} & &\cellcolor[gray]{0.8} & \\
    y_3 & 3 & 1 & & & & & & \\
    x_5 & 1 & -3 &\cellcolor[gray]{0.8} & &\cellcolor[gray]{0.8} & &\cellcolor[gray]{0.8} & \\
    y_5 & -3 & 1 & & & & & & \\
    \end{array}
    \end{equation}
    For conjugate primes $\mathfrak{p}$ and $\mathfrak{q}$ of $K$ we have $\im_{\mathfrak{p}}(x_d)=\im_{\mathfrak{q}}(y_d)$, and also $\im_{\mathfrak{p}}(x_d)=d\cdot \im_{\mathfrak{p}}(y_d)$. It follows that any $2\times 2$ block of the table corresponding to two conjugate primes and $x_d$ and $y_d$ for some $d\in\{-1,2,3,5\}$ is determined by any one of the four entries. Each gray-colored entry has only two options because $\mathfrak{p}_i$ is unramified in $K(\sqrt{x_d})$ for $i\in\{2,3,5,\infty\}$ and $d\in\{-1,2,3,5\}$, hence there are at most $2^{11}$ possibilities for the table. This upper bound can be reduced to $2^7$ due to various reciprocity relations\footnote{Tests with Magma indicate that all of these $2^7$ options actually occur, meaning that we have found all possible relations. Proving this is not necessary to complete the computation however.}:
    \begin{enumerate}[(a)]
        \item $\im_{\mathfrak{p}_2}(x_{-1})=1$ $\Leftrightarrow$ $\im_{\mathfrak{p}_{\infty}}(x_2)=1$ as $\prod_{\mathfrak{p}}(x_{-1},y_2)_{\mathfrak{p}}=1$.
        \item $\im_{\mathfrak{p}_3}(x_{-1})=1$ $\Leftrightarrow$ $\im_{\mathfrak{p}_{\infty}}(x_3)=1$ as $\prod_{\mathfrak{p}}(x_{-1},y_3)_{\mathfrak{p}}=1$.
        \item $\im_{\mathfrak{p}_5}(x_{-1})=1$ $\Leftrightarrow$ $\im_{\mathfrak{p}_{\infty}}(x_5)=1$ as $\prod_{\mathfrak{p}}(x_{-1},y_5)_{\mathfrak{p}}=1$.
        \item $\im_{\mathfrak{p}_5}(x_3)=1$ $\Leftrightarrow$ $\im_{\mathfrak{p}_3}(x_5)=1$ as $\prod_{\mathfrak{p}}(x_3,y_5)_{\mathfrak{p}}=1$.
   \end{enumerate}
   We now take four cases, depending on $\im_{\mathfrak{p}_2}(x_{-1})$ and $\im_{\mathfrak{p}_5}(x_{-1})$.\\
   
   \textbf{Case 1}: $\im_{\mathfrak{p}_2}(x_{-1})=1$ and $\im_{\mathfrak{p}_5}(x_{-1})=1$.\\
   In this case the table of local images take the following form.
   \begin{equation}
    \begin{array}{c|cccccccc}
    & \mathfrak{p}_2 & \mathfrak{q}_2 & \mathfrak{p}_3 & \mathfrak{q}_3 & \mathfrak{p}_5 & \mathfrak{q}_5 & \mathfrak{p}_{\infty} & \mathfrak{q}_{\infty} \\
    \hline
    x_{-1} & 1 & -1 & & & 1 & 1 & 1 & -1 \\
    y_{-1} & -1 & 1 & & & 1 & 1 & -1 & 1 \\
    x_2 & & & 1 & -1 & 1 & 2 & 1 & 1 \\
    y_2 & & & -1 & 1 & 2 & 1 & 1 & 1\\
    x_3 & 1 & 3 & & & & & & \\
    y_3 & 3 & 1 & & & & & & \\
    x_5 & 1 & -3 & & & & & 1 & 1\\
    y_5 & -3 & 1 & & & & & 1 & 1 \\
    \end{array}
    \end{equation}
    Let $e=(e_1,e_2,e_3)\in S^2(E_{-1}/K)$. As $\iota_{\mathfrak{p}_2}(e)$ and $\iota_{\mathfrak{q}_2}(e)$ are inside $\im(\delta_2)$, of which the first coordinate is contained in $\langle -1,3\rangle\subset\mathbb{Q}_2^*/\mathbb{Q}_2^{*2}$, we see $e_1\in\langle x_{-1},y_{-1},x_3,y_3,x_5,y_5\rangle$. Using that in fact the first coordinate of elements in $\im(\delta_2)$ are in $\langle -1\rangle$ further reduces this to $e_1\in\langle x_{-1},y_{-1},x_3x_5,y_3y_5\rangle$, leaving $2^4$ options. All of these options in fact occur: the conjugate elements $(x_{-1},x_{-1},1)$ and $(y_{-1},y_{-1},1)$ are in $S^2(E_{-1}/K)$. If $\mathrm{im}_{\mathfrak{p}_3}(x_5)=1$ we also find $(x_3x_5,x_2x_3,x_2x_5)$ in $S^2(E_{-1}/K)$ with its conjugate, while if $\mathrm{im}_{\mathfrak{p}_3}(x_5)=-1$ we have $(x_3x_5,y_2x_3,y_2x_5)$ and its conjugate in $S^2(E_{-1}/K)$. Regardless of $\mathrm{im}_{\mathfrak{p}_3}(x_5)$, we see that a complement $H$ inside $S^2(E_{-1}/K)$ to the subspace generated by these four elements is obtained by setting $e_1=1$. The $3$-adic and $5$-adic images then imply $e_2=e_3\in\langle x_{-1},y_{-1},x_2,y_2\rangle$, upon which the $5$-adic and real images imply that $e_2=e_3=1$. It follows that $H=0$ and $S^2(E_{-1}/K)$ is $4$-dimensional.\\

    \textbf{Case 2}: $\im_{\mathfrak{p}_2}(x_{-1})=-3$ and $\im_{\mathfrak{p}_5}(x_{-1})=1$.\\
    The table now takes the following form.
    \begin{equation}
    \begin{array}{c|cccccccc}
    & \mathfrak{p}_2 & \mathfrak{q}_2 & \mathfrak{p}_3 & \mathfrak{q}_3 & \mathfrak{p}_5 & \mathfrak{q}_5 & \mathfrak{p}_{\infty} & \mathfrak{q}_{\infty} \\
    \hline
    x_{-1} & -3 & 3 & & & 1 & 1 & 1 & -1 \\
    y_{-1} & 3 & -3 & & & 1 & 1 & -1 & 1 \\
    x_2 & & & 1 & -1 & 1 & 2 & -1 & -1 \\
    y_2 & & & -1 & 1 & 2 & 1 & -1 & -1\\
    x_3 & 1 & 3 & & & & & & \\
    y_3 & 3 & 1 & & & & & & \\
    x_5 & 1 & -3 & & & & & 1 & 1\\
    y_5 & -3 & 1 & & & & & 1 & 1 \\
    \end{array}
    \end{equation}
    Let $e=(e_1,e_2,e_3)\in S^2(E_{-1}/K)$. The $3$-adic image imply $e_3\in\langle x_{-1},y_{-1},x_2,y_2,x_5,y_5\rangle$, upon which the real image implies $e_3\in\langle x_{-1}y_{-1},x_2y_2,x_5,y_5\rangle$. As the $3$'rd coordinate of elements in $\im(\delta_2)$ is contained in $\langle 2,-3\rangle$, we further conclude that $e_3\in\langle x_2y_2,x_5,y_5\rangle$. Considering both the $\mathfrak{p}_3$ and $\mathfrak{q}_3$-adic image in either case of $\im_{\mathfrak{p}_3}(x_5)$ implies $e_3\in\langle x_2y_2x_5y_5\rangle=\langle 10\rangle$. In $S^2(E_{-1}/K)$ we find the element $(15,6,10)$ (which is the image of $(-28,120)\in E_{-1}(K)[4]$), hence $H=\{(e_1,e_2,e_3)\in S^2(E_{-1}/K): e_3=1\}$ is a complement to $\langle (15,6,10)\rangle$ inside $S^2(E_{-1}/K)$. For $e=(e_1,e_2,e_3)\in H$ we find $e_1=e_2\in\langle x_{-1},y_{-1},x_3,y_3\rangle$ by considering the $2$-adic and $5$-adic images. The $2$-adic image the further implies $e_1=e_2\in\langle x_{-1}y_{-1},x_{-1}x_3y_3\rangle$, upon which the $5$-adic image yields $e_1=e_2\in\langle x_{-1}y_{-1}\rangle=\langle -1\rangle$. Since indeed $(-1,-1,1)\in S^2(E_{-1}/K)$, we conclude that $H=\langle (-1,-1,1)\rangle$ and that $S^2(E_{-1}/K)$ has $\mathbb{F}_2$-dimension $2$.\\

    The other two cases are covered similarly and are left to the reader. The results are as follows: in $S^{2}(E_{-1}/K)$ we always have a $2$-dimensional subspace coming from $E_{-1}(K)[4]$, which is generated by $(-1,-1,1)$ and $(15,6,10)$. A basis for $S^{2}(E_{-1}/K)$ is obtained by adding the following additional generators.
    \pagebreak[2]
    \[
    \begin{array}{c|c|c|c|c}
      \im_{\mathfrak{p}_2}(x_{-1}) & \im_{\mathfrak{p}_5}(x_{-1})  & \im_{\mathfrak{p}_3}(x_5) & \mathrm{dim}_{\mathbb{F}_2}S^{2}(E_ {-1}/\mathbb{Q}(\sqrt{p})) & \text{additional generators}\\
      \hline
      1 & 1 & 1 & 4 & (x_{-1},x_{-1},1), (x_3x_5,x_2x_3,x_2x_5)\\
      1 & 1 & -1 & 4 & (x_{-1},x_{-1},1), (x_3x_5,y_2x_3,y_2x_5)\\
      1 & 2 & 1 & 2 & \textit{none}\\
      1 & 2 & -1 & 2 & \textit{none}\\
      -3 & 1 & 1 & 2 & \textit{none}\\
      -3 & 1 & -1 & 2 & \textit{none}\\
      -3 & 2 & 1 & 4 & (3x_{-1},3x_{-1},1), (x_3x_5,x_2x_3,x_2x_5)\\
      -3 & 2 & -1 & 4 & (3x_{-1},3x_{-1},1), (x_3x_5,y_2x_3,y_2x_5)\\
    \end{array}
    \]
    For $d\in\{2,5\}$ we have that $K(\sqrt{x_d})$ is minimally ramified, hence we have $\im_{\mathfrak{p}_d}(x_{-1})=1$ if and only if $[p,-1,d]=1$. Thus we see that $S^{2}(E_{-1}/K)$ has dimension $2$ if $[p,-1,2][p,-1,5]=-1$, and dimension $4$ if $[p,-1,2][p,-1,5]=1$. Using tri-linearity and reciprocity of R\'edei symbols, we have $[p,-1,2][p,-1,5]=[p,-1,10]=[-1,10,p]$, thus completing the proof.
\end{proof}
% \begin{remark}
%     One can show that the configuration of the table \label{2sel17table} depends on the values of seven R\'edei symbols. In the $\left\tfrac{p}{17}\right)=1$ case of Proposition (...) this was not the case.
% \end{remark}

\begin{proposition}\label{prop:2seloverpq}
    Let $p,q\equiv 17,113\bmod 120$ be distinct primes. Then
    \[
        \mathrm{dim}_{\mathbb{F}_2}S^{2}(E_{-q}/\mathbb{Q}(\sqrt{pq}))=
        \begin{cases*}
            6 & if\, $[-1,2,pq]=1$ \\
            4 & if\, $[-1,2,pq]=-1$
        \end{cases*}
    \]
\end{proposition}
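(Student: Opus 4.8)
The plan is to mirror the proof of Proposition \ref{prop:2seloverp}, now working over $K=\mathbb{Q}(\sqrt{pq})$ with the twist $E_{-q}$. First I would pin down the field and the set $S$. Since $p,q\equiv 1\bmod 8$ and $p,q\equiv 2,8\bmod 15$, one checks that $2,3,5$ all split in $K$ while $q$ (and $p$) ramifies; moreover $E_{-q}$ has good reduction at the primes over $p$, so these may be excluded from $S$. Thus $S$ consists of the two primes over each of $2,3,5$, the ramified prime $\mathfrak{q}$ over $q$, and the two real primes, giving $|S|=9$. The ring of $S$-integers $R_S$ then has unit rank $8$, and genus theory shows $\mathrm{Cl}(R_S)[2]=0$: the $2$-torsion of $\mathrm{Cl}_K$ is generated by the class of the ramified prime $\mathfrak{q}$, which is trivialised in the $S$-class group since $\mathfrak{q}\in S$. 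Hence $\dim_{\mathbb{F}_2}K(S)=9$.

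Next I would assemble the local images. At the split primes the completions are $\mathbb{Q}_2,\mathbb{Q}_3,\mathbb{Q}_5$, so I would reduce $-q$ modulo squares in each ($-q\equiv -1$ in $\mathbb{Q}_2^*/\mathbb{Q}_2^{*2}$, $-q\equiv 1$ in $\mathbb{Q}_3^*/\mathbb{Q}_3^{*2}$, $-q\equiv 2$ in $\mathbb{Q}_5^*/\mathbb{Q}_5^{*2}$) and read the images off \eqref{eq:localimages2} and \eqref{eq:localimagesrest}. At the ramified prime $\mathfrak{q}$, whose completion is a ramified quadratic extension of $\mathbb{Q}_q$, Proposition \ref{prop:dimlocalimage} gives (since $q>2$) that $\im(\delta_{\mathfrak{q}})$ is $2$-dimensional, hence equal to the image of the rational $2$-torsion under \eqref{eq:kummerx015} with $d=-q$.

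I would then choose a basis for $K(S)$ adapted to the norm map: elements $x_d,y_d$ with $N(x_d)\in d\mathbb{Q}^{*2}$ and $K(\sqrt{x_d})$ minimally ramified for $d\in\{-1,2,3,5\}$, together with the rational prime $q$ (which equals $p$ in $K^*/K^{*2}$, since $pq$ is a square) as the ninth generator. Using the twisting subgroups as in \cite[Lemma 7.7]{stevenhagen2021redei} I would normalise the images at the primes of $S$, build the table of local images, and apply the Hilbert-symbol product relations $\prod_{\mathfrak{p}}(x_d,y_{d'})_{\mathfrak{p}}=1$ to cut the number of free cases down to a manageable list. A case analysis, intersecting the norm-one subspace $\ker\!\left(\bigoplus_{i=1}^3 K(S)\to K^*/K^{*2}\right)$ with the local conditions at each prime and keeping track of the subgroup coming from $E_{-q}(K)_{\mathrm{tors}}$, then yields the dimension. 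Finally, minimal ramification lets me rewrite the deciding local images at the primes over $2$ as Rédei symbols, and tri-linearity together with reciprocity give $[pq,-1,2]=[-1,2,pq]$, which distinguishes the dimensions $6$ and $4$.

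The heaviest part, and the main obstacle, is the bookkeeping: with $\dim_{\mathbb{F}_2}K(S)=9$ the ambient norm-one space is $18$-dimensional, so the table of local images and the case analysis are appreciably larger than in Proposition \ref{prop:2seloverp}. In particular, the local computation at the ramified prime $\mathfrak{q}$ over $q$ (whose completion is not simply $\mathbb{Q}_q$) and the correct identification of which Hilbert-symbol relations survive—so as to reduce to the single Rédei symbol $[-1,2,pq]$ rather than a product of several—are where the real care is needed.
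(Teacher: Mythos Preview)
Your setup is sound up to the point where you propose the basis of $K(S)$, but there it breaks. You want elements $x_d\in K^*$ with $N_{K/\mathbb{Q}}(x_d)\in d\mathbb{Q}^{*2}$ for $d\in\{-1,2,3,5\}$. Such an $x_d$ exists if and only if $d$ is a norm from $K=\mathbb{Q}(\sqrt{pq})$, i.e.\ $(pq,d)_v=1$ for all places $v$. For $p\equiv 17,113\bmod 120$ one has $p\equiv 2\bmod 3$ and $p\equiv 2,3\bmod 5$, so $\left(\tfrac{3}{p}\right)=\left(\tfrac{5}{p}\right)=-1$; hence $(pq,3)_p=(pq,5)_p=-1$ and neither $3$ nor $5$ is a norm from $K$. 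The elements $x_3,y_3,x_5,y_5$ you plan to use simply do not exist, so your nine-element basis collapses. (One could repair this by using, say, $x_{15}$ together with local uniformisers at the split primes above $3$ and $5$, but that is a different construction and the bookkeeping you anticipate becomes genuinely heavy.)

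The paper avoids all of this with two observations you are missing. First, $E_{-q}$ has \emph{good} reduction at the ramified prime over $q$: over $K$ the twist $E_{-q}$ is isomorphic to $E_{-p}$ (since $pq\in K^{*2}$), and $E_{-p}$ has good reduction at $q$. So $\mathfrak{q}$ need not enter $S$ at all. Second, and more importantly, the local images at the primes over $3$ and $5$ (which you correctly identified as the $d=1$ and $d=2$ cases of \eqref{eq:localimagesrest}) are \emph{unramified}: their entries lie in $\langle -1\rangle$ and $\langle 2\rangle$ respectively. This forces every Selmer element to have coordinates already in $K(T)$, where $T$ consists only of the primes over $2$ and the real places. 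Now $\dim_{\mathbb{F}_2}K(T)=5$, with the tidy basis $\{x_{-1},y_{-1},x_2,y_2,q\}$ (only $d=-1,2$ are needed, and both \emph{are} norms). The resulting $5\times 4$ table has a single undetermined bit, namely $\im_{\mathfrak{p}_2}(x_{-1})$, which by minimal ramification is exactly $[pq,-1,2]=[-1,2,pq]$; the two cases give dimensions $6$ and $4$ immediately.

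So the obstacle you flagged as ``heaviest'' --- the $18$-dimensional ambient space and the local condition at $\mathfrak{q}$ --- is a symptom of missing these reductions rather than an intrinsic difficulty.
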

\begin{proof}
    Let $K=\mathbb{Q}(\sqrt{pq})$. The primes of $K$ where $E_{-q}$ has bad reduction are those over $2$, $3$ and $5$. Note that the good reduction at the prime over $q$ is due to the completion being a ramified quadratic extension of $\mathbb{Q}_q$. The completions at the primes of $K$ over $2,3,5$ and $\infty$ are $\mathbb{Q}_2,\mathbb{Q}_3,\mathbb{Q}_5$ and $\mathbb{R}$, respectively. The local images for these completions are as follows.
    \[
    \begin{array}{|c|}
        \hline
        \im(\delta_2) \\
        \hline
        (-1,-1,1)  \\
        (1,-6,-6) \\
        (1,2,2)  \\
        \hline
    \end{array}
    \,\,
    \begin{array}{|c|}
        \hline
        \im(\delta_3) \\
        \hline
        (1,-1,-1) \\
        (-1,-1,1) \\
        \hline
    \end{array}
    \,\,
    \begin{array}{|c|}
        \hline
        \im(\delta_5) \\
        \hline
        (2,2,1) \\
        (1,2,2) \\
        \hline
    \end{array}
    \,\,
    \begin{array}{|c|}
        \hline
        \im(\delta_{\infty}) \\
        \hline
        (-1,-1,1) \\
        \hline
    \end{array}
    \]
    From the $3$-adic and $5$-adic images, we see that in fact
    \[
    S^{2}(E_{-q}/K)\subset\ker\left(\bigoplus_{i=1}^3 K(T)\to K(T)\right),
    \]
    where $T$ consists just of the primes of $K$ over $2$ together with the real primes.\\
    For $d\in\{-1,2\}$ we have $(pq,d)_r=1$ for all primes $r$ so, there exists $x_d\in K$ with $N(x_d)\in d\mathbb{Q}^{*2}$ such that $K(\sqrt{x_d})/K$ is minimally ramified. Let $y_d$ be the conjugate of $x_d$ in $K$. Consider
    \[
    H:=\langle x_{-1},y_{-1},x_2,y_2,q\rangle\subset K^{*}/K^{*2}.
    \]
    Then $H$ has $\mathbb{F}_2$-dimension $5$ as the norm induced map $H\to\mathbb{Q}^{*}/\mathbb{Q}^{*2}$ has $2$-dimensional image and a $3$-dimensional kernel. The minimal ramification of the extensions $K(\sqrt{x_d})$ for $d=-1,2$, and the fact that $q$ is ramified in $K$, implies that $H\subset K(T)$. This inclusion is an equality, as the short exact sequence
    \[
    0\to R_T^*/R_T^{*2}\to K(T)\to \mathrm{Cl}(R_T)[2]\to 0,
    \]
    implies $\dim_{\mathbb{F}_2}K(T)\leq 5$ as $R_T^*/R_T^{*2}$ has dimension $4$ and $\mathrm{Cl}(R_T)[2]$ has at most dimension $1$ as as $\mathrm{Cl}_K[2]\cong\mathbb{Z}/2\mathbb{Z}$ and $\mathrm{Cl}(R_T)$ is a quotient of $\mathrm{Cl}_K$.\\
    The twisting subgroups are now as follows.
        \[
    \begin{array}{c|c}
        d & T_{pq,d} \\
        \hline
        -1 & \langle -1,p,q\rangle \\
        2 & \langle 2,p,q\rangle \\
    \end{array}
    \]
    Let $\mathfrak{p}_{\infty}:K\to\mathbb{R}$ be the infinite prime such that $\mathfrak{p}_{\infty}(x_{-1})>0$ and let $\mathfrak{q}_{\infty}$ be the other infinite prime. Let $\mathfrak{p}_2$ be the prime of $K$ over $2$ that is unramified in $K(\sqrt{x_{-1}})$ and let $\mathfrak{q}_2$ be the other prime over $2$. Twist $x_2$ by $2$ if necessary such that $\mathfrak{p}_2$ is unramified in $K(\sqrt{x_2})$. We then have the following table of local images.
    \[
    \begin{array}{c|cccc}
        & \mathfrak{p}_2 & \mathfrak{q}_2 & \mathfrak{p}_{\infty} & \mathfrak{q}_{\infty} \\
        \hline
        x_{-1} &\cellcolor[gray]{0.8} & & 1 & -1 \\
        y_{-1} & & & -1 & 1 \\
        x_2 &\cellcolor[gray]{0.8} & &\cellcolor[gray]{0.8} & \\
        y_2 & & & & \\
        q & 1 & 1 & 1 & 1 
    \end{array}
    \]
    As $K(\sqrt{x_{-1}})/K$ is minimally ramified, we find that $\im_{\mathfrak{p}_2}(x_{-1})=1$ precisely when $[pq,-1,2]=1$. Similarly, working with $K(\sqrt{x_2})$ we find that $\im_{\mathfrak{p}_{\infty}}(x_2)=1$ precisely when $[pq,2,-1]=1$. By R\'edei reciprocity we have $[pq,2,-1]=[pq,-1,2]$, giving a relation between the two conditions, and by the reciprocity again they both also equal $[-1,2,pq]$.\\

    \textbf{Case 1}: $[-1,2,pq]=1$.\\
    In this case the table takes the form
    \[
    \begin{array}{c|cccc}
        & \mathfrak{p}_2 & \mathfrak{q}_2 & \mathfrak{p}_{\infty} & \mathfrak{q}_{\infty} \\
        \hline
        x_{-1} & 1 & -1 & 1 & -1 \\
        y_{-1} & -1 & 1 & -1 & 1 \\
        x_2 & a & 2a & 1 & 1 \\
        y_2 & 2a & a & 1 & 1 \\
        q & 1 & 1 & 1 & 1 
    \end{array}
    \]
    with $a\in\langle -3\rangle\subset\mathbb{Q}_2^{*}/\mathbb{Q}_2^{*2}$. Let $(e_1,e_2,e_3)\in S^{2}(E_{-q}/K)$. Then the real images imply that $e_3\in\langle x_2,y_2,q\rangle$, while the first coordinate of the $2$-adic images imply $e_1\in\langle x_{-1},y_{-1},q\rangle$. As $e_2$ is determined by $e_1$ and $e_3$, this leaves $2^6$ options, all of which are in fact in $S^{2}(E_{-q}/K)$. Thus $S^{2}(E_{-q}/K)$ is $6$-dimensional.\\

    \textbf{Case 2}: $[-1,2,pq]=-1$.\\
    In this case the table takes the form
    \[
    \begin{array}{c|cccc}
        & \mathfrak{p}_2 & \mathfrak{q}_2 & \mathfrak{p}_{\infty} & \mathfrak{q}_{\infty} \\
        \hline
        x_{-1} & -3 & 3 & 1 & -1 \\
        y_{-1} & 3 & -3 & -1 & 1 \\
        x_2 & a & 2a & -1 & -1 \\
        y_2 & 2a & a & -1 & -1 \\
        q & 1 & 1 & 1 & 1 
    \end{array}
    \]
    with $a\in\langle -3\rangle\subset\mathbb{Q}_2^{*}/\mathbb{Q}_2^{*2}$. Let $(e_1,e_2,e_3)\in S^{2}(E_{-q}/K)$. Then the real images imply that $e_3\in\langle 2,q\rangle$, while the first coordinate of the $2$-adic images imply $e_1\in\langle -1,q\rangle$. This leaves $2^4$ options, all of which are in $S^{2}(E_{-q}/K)$. Thus $S^{2}(E_{-q}/K)$ is $4$-dimensional.
\end{proof}
As final ingredient for the proof of Theorem \ref{thm:mainrank}, let us state the following version of Chebotar\"ev's density theorem. We refer to \cite[Ch. I, \S 2.2]{serre1998}.
\begin{theorem}[Chebotar\"ev's density theorem]\label{thm:chebo}
Let $K/\mathbb{Q}$ be a finite Galois extension and let $C\subset\gal(K/\mathbb{Q})$ be a non-empty subset closed under conjugation. Then
\[
\lim_{X\to\infty}\frac{\#\{p\leq X : p \text{ is unramified in }K\text{ and }\mathrm{Frob}_p\in C\}}{\#\{p\leq X\}}=\frac{\# C}{[K:\mathbb{Q}]}.
\]
\end{theorem}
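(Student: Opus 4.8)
The plan is to reduce the statement to the abelian case by Deuring's trick, the abelian case itself resting on the analytic theory of $L$-functions; since the displayed limit is the natural rather than merely the Dirichlet density, a Tauberian step is also needed at the end. I would begin with an abelian extension $M/F$ of number fields, $G=\gal(M/F)$. Each character $\chi$ of $G$, composed with the Artin map, yields a Hecke character whose $L$-function satisfies $\log L(s,\chi)=\sum_{\mathfrak{p}}\chi(\mathrm{Frob}_{\mathfrak{p}})N\mathfrak{p}^{-s}+O(1)$ as $s\to 1^{+}$, the sum being over primes $\mathfrak{p}$ of $F$ unramified in $M$. The crucial analytic input is that $L(s,\chi)$ is holomorphic and non-vanishing at $s=1$ for $\chi\neq 1$, while $L(s,1)$ has a simple pole there. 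Character orthogonality --- $\tfrac{1}{|G|}\sum_{\chi}\overline{\chi(g)}\,\chi(\mathrm{Frob}_{\mathfrak{p}})$ equals $1$ exactly when $\mathrm{Frob}_{\mathfrak{p}}=g$ and vanishes otherwise --- then gives $\sum_{\mathrm{Frob}_{\mathfrak{p}}=g}N\mathfrak{p}^{-s}\sim\tfrac{1}{|G|}\log\tfrac{1}{s-1}$, i.e.\ density $1/|G|$ for each $g\in G$.

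For the general case it suffices to handle a single conjugacy class $C=\mathrm{cl}(\sigma)$ of $G=\gal(K/\mathbb{Q})$. Following Deuring, I would set $H=\langle\sigma\rangle$ of order $m$ and $L=K^{H}$, so that $K/L$ is cyclic with group $H$. By the base case the primes $\mathfrak{q}$ of $L$ with $\mathrm{Frob}_{K/L}(\mathfrak{q})=\sigma$ have density $1/m$ among the primes of $L$. Almost all such $\mathfrak{q}$ have residue degree $1$ over $\mathbb{Q}$, since higher-degree primes contribute density $0$, so I can count instead the rational primes beneath them. A group-theoretic count of how many degree-one primes $\mathfrak{q}$ with $\mathrm{Frob}_{K/L}(\mathfrak{q})=\sigma$ lie above a fixed unramified $p$ having $\mathrm{Frob}_{K/\mathbb{Q}}(p)\in C$ --- controlled by the normalizer of $H$ in $G$ --- converts the upstairs density $1/m$ into the downstairs density $|C|/|G|$.

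The preceding steps produce a Dirichlet density; to obtain the natural density appearing in the displayed limit I would invoke the stronger analytic statement, namely meromorphic continuation and non-vanishing of the relevant $L$-functions on the line $\mathrm{Re}(s)=1$ (equivalently the prime ideal theorem with an error term), which forces the Dirichlet and natural densities to coincide for the conjugation-closed sets at hand.

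The genuinely deep ingredient --- and hence the main obstacle --- is the analytic non-vanishing $L(1,\chi)\neq 0$ for $\chi\neq 1$ together with the continuation past $s=1$. By contrast the character orthogonality of the base case and the counting in Deuring's reduction are purely formal, and the upgrade from Dirichlet to natural density is standard Tauberian bookkeeping.
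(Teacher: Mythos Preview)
The paper does not prove this statement at all: it merely cites Serre, \emph{Abelian $\ell$-adic Representations}, Ch.~I, \S2.2, and uses the theorem as a black box in the density computations that follow. So there is no ``paper's own proof'' to compare against.

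Your outline is a correct sketch of the classical argument and goes well beyond what the paper provides. The reduction you describe is indeed Deuring's: pass from an arbitrary conjugacy class $C=\mathrm{cl}(\sigma)$ in $G=\gal(K/\mathbb{Q})$ to the cyclic extension $K/K^{\langle\sigma\rangle}$, handle that by Hecke $L$-functions and character orthogonality, and then translate the density back down via the normaliser count. The one place worth tightening is the upgrade from Dirichlet to natural density: in Deuring's framework you need the natural-density version already for the cyclic base case over the intermediate field $L=K^{\langle\sigma\rangle}$, since the downstairs count is obtained from the upstairs one by a purely combinatorial reweighting. This is exactly what the prime ideal theorem for $L$ together with non-vanishing of the Hecke $L$-functions on $\mathrm{Re}(s)=1$ gives you, so your plan is sound, but the Tauberian input enters \emph{before} the reduction step rather than as a final clean-up. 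With that small reordering your sketch is complete.
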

Here all instances of $p$ assume that $p$ is a prime, and $\mathrm{Frob}_p$ is some choice of $\mathrm{Frob}_{\mathfrak{p}}$ for $\mathfrak{p}$ a prime of $K$ over $p$. Different choices for $\mathrm{Frob}_p$ are conjugate, so as $C$ is closed under conjugation, the condition $\mathrm{Frob}_p\in C$ is independent of the choice of $\mathrm{Frob}_p$.
\begin{proof}[Proof of Theorem \ref{thm:mainrank}]
    Recall from \eqref{eq:rankrel-p} the identity
    \begin{equation}\label{eq:rankrel-p2}
    \rank(E/\mathbb{Q}(\sqrt{-p}))=\rank(E_{-p}/\mathbb{Q}).
    \end{equation}
    We prove that $\rank(E/\mathbb{Q}(\sqrt{-p}))=0$ by showing that $\rank(E_{-p}/\mathbb{Q})=0$. Suppose that $p\equiv 1,2,3,4,5,8\bmod 15$. Then $\rank(E_{-p}/\mathbb{Q})=0$ follows from Proposition \ref{prop:basicrankandshas} if $p\leq 5$, and from Proposition \ref{prop:2selrat} if $p>5$.\\
    If $p\equiv 1\bmod 8$, $p\equiv 1,4\bmod 15$, and $[-1,10,p]=-1$, then applying both Proposition \ref{prop:basicrankandshas} and \eqref{eq:rankformulatwist} for $d=-1$ we obtain
    \[
        \rank(E_{-p}/\mathbb{Q})=\rank(E_{-1}/\mathbb{Q}(\sqrt{p})).
    \]
    The right hand side is $0$ by Proposition \ref{prop:2seloverp}.\\
    Lastly, assume that $p\equiv 1\bmod 8$, $p\equiv 2,8\bmod 15$, and $[-1,2,p]=1$. Then $p\neq 17$ as $[-1,2,17]=-1$, and as $[-1,2,17p]=-1$, we get from Proposition \ref{prop:2seloverpq} and \eqref{eq:rankformulatwist} with $d=-17$ the inequality
    \[
        \rank(E_{-p}/\mathbb{Q})+\rank(E_{-17}/\mathbb{Q})\leq 2.
    \]
    In fact we have $\rank(E_{-17}/\mathbb{Q})=2$: the points $(\tfrac{-2246}{17},\tfrac{-103950}{17^2})$ and $(\tfrac{52}{17},\tfrac{832}{17^2})$ have images $(1,2,2)$ and $(-1,-1,1)$ in $S^{2}(E_{-17}/\mathbb{Q})$, respectively. We conclude that $\rank(E_{-p}/\mathbb{Q})=0$.\\

    It remains to prove the density statement.  Note that it suffices to show that
    \begin{equation}\label{eq:maindens}
        \lim_{X\to\infty}\frac{\#\{p\leq X: p \text{ satisfies } (i), (ii) \text{ or } (iii)\}}{\#\{p\leq X\}}=\frac{7}{16},
    \end{equation}
    because from Theorem \ref{thm:chebo} applied to $\mathbb{Q}(\zeta_{15})$ and $C=\{\overline{1},\overline{2},\overline{4},\overline{8}\}\subset(\mathbb{Z}/15\mathbb{Z})^{*}=\mathrm{Gal}(\mathbb{Q}(\zeta_{15})/\mathbb{Q})$ we deduce
    \[
        \lim_{X\to\infty}\frac{\#\{p\leq X: p \equiv 1,2,3,4,5,8\bmod 15\}}{\#\{p\leq X\}}=\frac{1}{2}.
    \]
    Applying Theorem \ref{thm:chebo} to $\mathbb{Q}(\zeta_{120})$ we deduce
    \begin{equation}\label{eq:densi}
        \lim_{X\to\infty}\frac{\#\{p\leq X:p \text{ satisfies } (i)\}}{\#\{p\leq X\}}=\frac{4\cdot 3}{\varphi(120)}=\frac{3}{8}
    \end{equation}
    Let $E=\mathbb{Q}(\sqrt{-1},\sqrt{10})$ and $F=E(\sqrt{3+\sqrt{-1}})$. Then $F$ is minimally ramified and can be used to compute the R\'edei symbol $[-1,10,p]$ provided it defined, which is the case if and only if $p\equiv 1,9\bmod 40$.\\
    Consider the Galois extenion $F(\zeta_{24})/\mathbb{Q}$ of degree $32$. Let $\sigma\in\gal(F(\zeta_{24})/\mathbb{Q})$ be the generator of the order $2$ subgroup corresponding to the intermediate field $E(\zeta_{24})$, and let $Z_{\mathfrak{p}}$ be the decomposition field of a prime $\mathfrak{p}$ of $F(\zeta_{24})$ over $p$. The prime $p$ splits completely in $E(\zeta_{24})$ precisely when $p\equiv 1,49\bmod 120$, hence
    \[
    p\text{ satisfies } (ii)\,\Leftrightarrow\,Z_{\mathfrak{p}}=E(\zeta_{24})=\,\Leftrightarrow\,\mathrm{Frob}_p=\sigma.
    \]
    Theorem \ref{thm:chebo} thus implies
    \begin{equation}\label{eq:densii}
        \lim_{X\to\infty}\frac{\#\{p\leq X:p \text{ satisfies } (ii)\}}{\#\{p\leq X\}}=\frac{1}{32}
    \end{equation}
    For condition $(iii)$, let $E'=\mathbb{Q}(\sqrt{-1},\sqrt{2})=\mathbb{Q}(\zeta_8)$, and $F'=E'(\sqrt{1+\sqrt{-1}})$. The standard isomorphism
    \[
        \gal(F'(\zeta_{15})/\mathbb{Q})\xrightarrow{\sim}\gal(F'/\mathbb{Q})\times\gal(\mathbb{Q}(\zeta_{15})/\mathbb{Q})=\gal(F'/\mathbb{Q})\times(\mathbb{Z}/15\mathbb{Z})^{*},
    \]
    maps a Frobenius element $\mathrm{Frob}_p$ over $p$ to $(\mathrm{Frob}_p,p\bmod 15)$. Let $C\subset \gal(F'(\zeta_{15})/\mathbb{Q})$ be the subset corresponding under this isomorphism with $1\times\{\overline{2},\overline{8}\}$. It follows that $\mathrm{Frob}_p\in C$ if and only if $p\equiv 17,113\bmod 120$ and $[-1,2,p]=1$, hence with Theorem \ref{thm:chebo} we obtain
    \begin{equation}\label{eq:densiii}
        \lim_{X\to\infty}\frac{\#\{p\leq X:p \text{ satisfies } (iii)\}}{\#\{p\leq X\}}=\frac{2}{64}=\frac{1}{32}.
    \end{equation}
    As the conditions $(i)$, $(ii)$ and $(iii)$ are mutually exclusive, \eqref{eq:maindens} follows by taking the sum of \eqref{eq:densi}, \eqref{eq:densii} and \eqref{eq:densiii}, thus completing the proof.
\end{proof}
\begin{remark}
    Any prime $q\equiv 17,113\bmod 120$ such that $\rank(E_{-q}/\mathbb{Q})=2$ can be used in the proof of \ref{thm:mainrank}(iii). Possible alternatives for the choice of $q=17$ are for example $q=233$, or $q=617$. A priori the prime $q$ must also satisfy $[-1,2,q]=-1$, but after having a proof of Theorem \ref{thm:mainrank}(iii) we know this follows from the rank being non-zero.
\end{remark}

\section{Relation to $4$-descent}\label{sec:shasection}
\noindent In the previous section we encountered various instances where the rank bound of $X_0(15)_{-p}/\mathbb{Q}$ obtained from a $2$-Selmer group computation could be improved using $2$-Selmer group computations over suitable quadratic fields. A natural question is how this improvement is related to a $4$-descent. In this section we prove that the rank bounds are exactly as strong as those obtained from a $4$-descent. In fact we will show a stronger result relating higher descents. After a lemma we recall parts of \cite[\S 4]{Bruin_2004} concerning visualisation techniques of $\Sh[2]$, and explain how these techniques can improve on a $2$-descent. The main result of this section, Theorem \ref{thm:mainsha}, gives a condition under which the rank bounds obtained from visualisation of $\Sh[2]$ are in fact equivalent to those obtained from a $4$-descent. We then use this result to show that the rank bounds obtained in section $3$ are equivalent to those obtained from performing a $4$-descent on $X_0(15)_{-p}$ over $\mathbb{Q}$.\\

\noindent Throughout this section, let $E$ be an elliptic curve defined over a number field $K$. 
\begin{lemma}\label{lem:normcor}
    For $L/K$ a finite extension we have the following commutative diagram.
    \[
    \begin{tikzcd}
        M_L' \arrow[r, "\sim"] \arrow[d, "N"] & {H^1(L,E[2])} \arrow[d, "\mathrm{cor}"] \\
        M_K' \arrow[r, "\sim"]                & {H^1(K,E[2])}                          
    \end{tikzcd}
    \]
\end{lemma}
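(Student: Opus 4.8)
The plan is to realise both Schaefer isomorphisms through a single exact sequence of $G_K$-modules and then invoke the naturality of corestriction. Recall that the identification $H^1(K,E[2])\cong M_K'$ of \cite{schaefer1995} arises from the exact sequence
\[
0\to E[2]\to \mathrm{Res}_{M_K/K}\mu_2\xrightarrow{\,N\,}\mu_2\to 0
\]
of $G_K$-modules, where $\mathrm{Res}_{M_K/K}\mu_2$ is the Weil restriction (whose $\overline{K}$-points are the functions from the roots of $f$ to $\mu_2$), $N$ is multiplication of all coordinates, and $E[2]$ is identified with the kernel of $N$ (the even-weight vectors, $(e_i,0)$ mapping to the vector with a single $+1$ in position $i$). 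Taking Galois cohomology and applying Shapiro's lemma gives $H^1(K,\mathrm{Res}_{M_K/K}\mu_2)\cong M_K^*/M_K^{*2}$, under which the map induced by $N$ becomes the algebra norm $M_K^*/M_K^{*2}\to K^*/K^{*2}$. Since $\deg f$ is odd the boundary map $H^0(K,\mu_2)\to H^1(K,E[2])$ vanishes, so $H^1(K,E[2])\hookrightarrow H^1(K,\mathrm{Res}_{M_K/K}\mu_2)$ is injective with image exactly $M_K'$. Because $M_L=M_K\otimes_K L$ has the same Galois set of roots as $M_K$, restricting the displayed sequence to $G_L$ gives the analogous sequence computing $H^1(L,E[2])\cong M_L'$.

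First I would use that corestriction is a natural transformation $H^1(L,-)\to H^1(K,-)$ of functors on $G_K$-modules. Applied to the inclusion $E[2]\hookrightarrow \mathrm{Res}_{M_K/K}\mu_2$ this yields a commutative square whose horizontal arrows are the injections with images $M_L'$ and $M_K'$ from the previous paragraph, and whose vertical arrows are the two corestriction maps. It therefore suffices to prove that, under the Shapiro identifications $H^1(L,\mathrm{Res}_{M_K/K}\mu_2)\cong M_L^*/M_L^{*2}$ and $H^1(K,\mathrm{Res}_{M_K/K}\mu_2)\cong M_K^*/M_K^{*2}$, the corestriction $\mathrm{cor}\colon H^1(L,\mathrm{Res}_{M_K/K}\mu_2)\to H^1(K,\mathrm{Res}_{M_K/K}\mu_2)$ becomes the norm $N_{M_L/M_K}\colon M_L^*/M_L^{*2}\to M_K^*/M_K^{*2}$. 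Restricting this identity to the subgroups $M_L'$ and $M_K'$ then gives the lemma; note that these subgroups are matched by the norm, since transitivity $N_{M_K/K}\circ N_{M_L/M_K}=N_{L/K}\circ N_{M_L/L}$ shows $N_{M_L/M_K}$ carries $M_L'$ into $M_K'$.

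The heart of the argument is the compatibility of corestriction with Shapiro's lemma. Writing $\mathrm{Res}_{M_K/K}\mu_2$ as the $G_K$-module induced from $\mu_2$ along the subgroups cutting out the factors of $M_K$, the restriction of this induced module to $G_L$ decomposes by Mackey's formula according to the factorisation $M_L=M_K\otimes_K L=\prod_j L_j$, matching the decomposition of $\mathrm{Res}_{M_L/L}\mu_2$. Tracing corestriction through these identifications turns it into a sum of corestrictions $H^1(L_j,\mu_2)\to H^1(K_i,\mu_2)$ along the factors, and under Kummer theory $H^1(-,\mu_2)\cong(-)^*/(-)^{*2}$ each such corestriction is the norm $N_{L_j/K_i}$. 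Assembling these over all factors produces exactly $N_{M_L/M_K}$, completing the reduction. I expect the main obstacle to be precisely this last step: carefully bookkeeping the double-coset (Mackey) decomposition so that the factor-by-factor norms assemble into the single norm $M_L^*/M_L^{*2}\to M_K^*/M_K^{*2}$, and verifying that the classical identity ``corestriction equals the norm on $H^1(-,\mu_2)$'' is applied to the correct extensions $L_j/K_i$ rather than directly to $L/K$.
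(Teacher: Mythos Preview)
Your argument is correct and follows the same overall architecture as the paper's proof: both factor the Schaefer isomorphism through the inclusion $E[2]\hookrightarrow\mu_2(\overline{M}_K)=\mathrm{Res}_{M_K/K}\mu_2$, use naturality of corestriction for that inclusion, and then reduce to identifying $\mathrm{cor}$ on $H^1(-,\mu_2(\overline{M}_K))$ with the algebra norm on $M^*/M^{*2}$. The difference lies only in how this last identification is carried out. You pass through Shapiro's lemma and a Mackey double-coset decomposition, reducing to the classical fact that $\mathrm{cor}$ on $H^1(-,\mu_2)$ is the field norm on each factor $L_j/K_i$; this is correct but, as you yourself flag, requires some bookkeeping. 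The paper instead avoids Mackey entirely: it views $M_K^*/M_K^{*2}\cong H^1(K,\mu_2(\overline{M}_K))$ as coming from the connecting map of the Kummer sequence $1\to\mu_2(\overline{M}_K)\to\overline{M}_K^{*}\xrightarrow{(\,)^2}\overline{M}_K^{*}\to 1$, observes that $\mathrm{cor}$ on $H^0(-,\overline{M}_K^{*})$ is literally $N_{M_L/M_K}$, and then invokes the compatibility of corestriction with connecting homomorphisms. That route is shorter and sidesteps the factor-by-factor analysis; your route is more explicit about what happens on each field component. Either way the transitivity of the norm (which you cite) is what shows $N_{M_L/M_K}$ lands in $M_K'$.
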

\noindent Here $N$ is induced from the relative norm map $N_{M_L/M_K}:M_L\to M_K$ and the horizontal isomorphisms are as in section \ref{sec:2descentsection}. Note that $M_L$ is a finite free $M_K$-algebra of rank $[L:K]$ as $M_L=M_K\otimes_K L$, hence we indeed have said norm map.
\begin{proof}
     By transitivity of the norm, see \cite[Prop. III.9.4]{bourbaki1998algebra}, we have a commutative diagram of norm maps.
    \[
    \begin{tikzcd}
               & M_L^* \arrow[ld] \arrow[rd] &              \\
M_K^* \arrow[rd] &                           & L^* \arrow[ld] \\
               & K^*                         &             
\end{tikzcd}
    \]
    Modding out by squares, we obtain an induced map $N:M_L'\to M_K'$. For the commutativity, let $\overline{M}_K=\overline{K}[t]/f(t)$ and similarly for $L$. Letting $G_K$ act trivially on $t$ we give $\overline{M}_K$ a $G_K$-module structure in the obvious way. Then $M_K=\overline{M}_K^{G_K}$, and the norm map $N_{M_L/M_K}:M_L\to M_K$ equals $\mathrm{cor}:H^0(L,\overline{M}_L)\to H^0(K,\overline{M}_K)$. 
    The isomorphism $H^1(K,E[2])\to M_K'$ comes from the composition
    \[
    H^1(K,E[2])\xrightarrow{w_K}H^1(K,\mu_2(\overline{M_K}))\xrightarrow{k_K}M_K^{*}/M_K^{*2},
    \]
    and similarly for $L$ (notation of the maps is as in \cite{schaefer1995}, but with subscripts to specify the base field). The desired commutativity thus follows at once because the corestriction maps are natural and compatible with connecting homomorphisms.
\end{proof}

Suppose that $L=K(\sqrt{d})$ is a quadratic extension of $K$, and consider the Weil restriction $A=\mathrm{Res}_{L/K}E_L$. It is an Abelian surface which by definition, see \cite[\S 7.6]{boschluetkebohmertraynaud1980neron}, comes with bijections
\begin{equation}\label{eq:adjunction}
\mathrm{Hom}_L(T_L,E_L)\cong\mathrm{Hom}_K(T,A)
\end{equation}
for any $K$-scheme $T$, natural in $T$. The $G_K$-module $A(\overline{K})$ is induced from the $G_L$-module $E(\overline{L})$, hence by Shapiro's lemma we get $H^n(K,A)\cong H^n(L,E)$ for all $n\geq 0$. For $n=0$ this isomorphism coincides with the isomorphism $E(K)\cong E(L)$ obtained from \eqref{eq:adjunction} with $T=\mathrm{Spec}(K)$. Similarly, $A(\overline{K})[2]$ is induced from $E(\overline{L})[2]$ and we obtain isomorphic long exact sequences
\begin{equation}\label{eq:weilrescohom}
\begin{tikzcd}[column sep=2.9ex, row sep=4ex]
0 \arrow[r] & {A(K)[2]} \arrow[d, "\sim" {rotate=90, anchor=north}] \arrow[r] & A(K) \arrow[d, "\sim" {rotate=90, anchor=north}] \arrow[r] & A(K) \arrow[d, "\sim" {rotate=90, anchor=north}] \arrow[r] & {H^1(K,A[2])} \arrow[d, "\sim" {rotate=90, anchor=north}] \arrow[r] & {H^1(K,A)} \arrow[d, "\sim" {rotate=90, anchor=north}] \arrow[r] & \cdots \\
0 \arrow[r] & {E(L)[2]} \arrow[r]                   & E(L) \arrow[r]           & E(L) \arrow[r]           & {H^1(L,E[2])} \arrow[r]           & {H^1(L,E)} \arrow[r]           & \cdots
\end{tikzcd}
\end{equation}
As $K_{\mathfrak{p}}\otimes L\cong\prod_{\mathfrak{q}|\mathfrak{p}}L_{\mathfrak{q}}$ for a prime $\mathfrak{p}$ of $K$, the isomorphism \eqref{eq:weilrescohom} also induces isomorphisms
\[
\begin{split}
    S^{2}(A/K)&\cong S^{2}(E/L), \\
    \Sh(A/K)&\cong\Sh(E/L).
\end{split}
\]
Now suppose that $E$ and the quadratic twist $E_{d}$ are given by the equations
\begin{alignat*}{2}
    E&:\,\,\,y^2\,&=f(x) \\
    E_{d}&:dy^2\,&=f(x)
\end{alignat*}
for a monic cubic polynomial $f(x)\in K[x]$. The identity on $E_L$ and the isomorphism $E_L\to (E_L)_{d}$ fixing the $x$-coordinate yields by \eqref{eq:adjunction} a map $\phi:E\times E_{d}\to A$ defined over $K$. This is a $2$-isogeny\footnote{For $L/K$ a finite Galois extension, $\mathrm{Res}_{L/K}E_L$ is isogenous to a direct sum of twists of $E$ by the irreducible rational representations of $\mathrm{Gal}(L/K)$, see \cite[Theorem 4.5]{mazurrubinsilverberg2007}.} with kernel equal to the diagonal embedding $\Delta$ of $E_1[2]\cong E_2[2]$, hence $\phi$ factors through $[2]$ as $[2]=\phi^{\vee}\phi$ for a unique $\phi^{\vee}:A\to E\times E_{d}$. As an immediate application, let us quickly give a proof of
\begin{proposition}\label{prop:rankidentitytwist}
We have $\rank(E/L)=\rank(E/K)+\rank(E_{d}/K)$.
\end{proposition}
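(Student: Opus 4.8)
The plan is to chain together three rank identities, each of which follows from structure already set up in the excerpt. First I would observe that the Shapiro-type isomorphism $A(K)\cong E(L)$ recorded in the top-left square of \eqref{eq:weilrescohom} is an isomorphism of finitely generated abelian groups, hence immediately gives
\[
\rank(E/L)=\rank(A/K).
\]
So it suffices to compute $\rank(A/K)$ in terms of $E$ and $E_d$.

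The key step is to exploit the $2$-isogeny $\phi\colon E\times E_d\to A$ constructed just before the statement. Since $\phi$ is an isogeny of abelian varieties over $K$, it has finite kernel (here $\ker\phi=\Delta\cong E[2]$) and is surjective on $\overline{K}$-points, and the dual relation $[2]=\phi^{\vee}\phi$ exhibits an isogeny in the other direction. Tensoring the induced maps on Mordell--Weil groups with $\mathbb{Q}$ kills the finite kernels and cokernels, so $\phi$ induces an isomorphism
\[
\bigl(E(K)\times E_d(K)\bigr)\otimes_{\mathbb{Z}}\mathbb{Q}\xrightarrow{\ \sim\ }A(K)\otimes_{\mathbb{Z}}\mathbb{Q}.
\]
Comparing $\mathbb{Q}$-dimensions yields $\rank(A/K)=\rank\bigl((E\times E_d)/K\bigr)$. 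Finally, since $(E\times E_d)(K)=E(K)\times E_d(K)$ as abelian groups, the rank of the product is the sum of the ranks, giving $\rank\bigl((E\times E_d)/K\bigr)=\rank(E/K)+\rank(E_d/K)$. Combining the three identities completes the proof.

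I do not anticipate a genuine obstacle here; this is a short bookkeeping argument whose only substantive input is the standard fact that an isogeny preserves rank, which I would either cite or justify in one line via the existence of $\phi^{\vee}$ with $\phi^{\vee}\phi=[2]$. The one point deserving a brief remark is that the isomorphism $A(K)\cong E(L)$ is precisely the $n=0$ instance of \eqref{eq:weilrescohom}, already identified in the text with the adjunction isomorphism from \eqref{eq:adjunction}, so no independent verification is needed. The proof is therefore essentially a reorganisation of facts established in the preceding paragraphs, which is consistent with the author's remark that it is given ``as an immediate application.''
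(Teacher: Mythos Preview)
Your proposal is correct and follows essentially the same three-step chain as the paper's proof: $\rank(E/L)=\rank(A/K)=\rank(E\times E_d/K)=\rank(E/K)+\rank(E_d/K)$, with the middle equality coming from isogeny invariance of the rank. The only difference is that the paper simply cites ``rank is an isogeny invariant'' without further comment, whereas you spell out the justification via tensoring with $\mathbb{Q}$ and the existence of $\phi^{\vee}$.
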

\begin{proof}
    As the rank is an isogeny invariant we obtain the result as follows.
    \[
    \begin{split}
        \rank(E/L)&=\rank(A/K)\\
        &=\rank(E\times E_{d}/K)\\
        &=\rank(E/K)+\rank(E_{d}/K)\qedhere
    \end{split}
    \]
\end{proof}
In particular, since $\Delta$ is isotropic with respect to the Weil pairing $e_2$ on $(E_1\times E_2)[2]$, we obtain a principal polarisation of $(E\times E_{d})/\Delta\cong A$ compatible with the isogenies $\phi$ and $\phi^{\vee}$. Alternatively, one can use that Weil restriction preserves principal polarisations, see \cite[Prop. 2]{diemnaumann2005}.\

Denote $\phi$ composed with the inclusion $E\to E\times E_{d}$ by $p^*$, and $\phi^{\vee}$ composed with the projection $E\times E_{d}\to E$ by $p_*$ as in \cite[\S 3]{Bruin_2004}. Then we have the following commutative diagram
\[
\begin{tikzcd}
E \arrow[rrdd, swap,"{[2]}"] \arrow[r] \arrow[rr, "p^*", bend left=20] & E\times E_{d} \arrow[rd, "{[2]}"'] \arrow[r, "\phi"'] & A \arrow[d, "\phi^{\vee}"'] \arrow[dd, "p_*", bend left=50] \\
                                                               &                                                     & E\times E_{d} \arrow[d]                               \\
                                                               &                                                     & E                                               
\end{tikzcd}
\]
We see that $[2]$ on $E$ factors through $p^*$ and $p_*$. This also induces a factorisation of $[2]$ on the long exact sequence of cohomology associated to $[2]$ on $E(\overline{K})$, and upon applying the isomorphism of \eqref{eq:weilrescohom} we obtain the following commutative diagram
\[
\begin{tikzcd}[column sep=3.5ex, row sep=4ex]
0 \arrow[r] & {E(K)[2]} \arrow[d] \arrow[r] & E(K) \arrow[d] \arrow[r] & E(K) \arrow[d] \arrow[r] & {H^1(K,E[2])} \arrow[d] \arrow[r] & {H^1(K,E)} \arrow[d] \arrow[r] & \cdots \\
0 \arrow[r] & {E(L)[2]} \arrow[r] \arrow[d] & E(L) \arrow[r] \arrow[d] & E(L) \arrow[r] \arrow[d] & {H^1(L,E[2])} \arrow[r] \arrow[d] & {H^1(L,E)} \arrow[r] \arrow[d] & \cdots \\
0 \arrow[r] & {E(K)[2]} \arrow[r]           & E(K) \arrow[r]           & E(K) \arrow[r]           & {H^1(K,E[2])} \arrow[r]           & {H^1(K,E)} \arrow[r]           & {}    
\end{tikzcd}
\]
where the first row of vertical arrows coincide with restriction maps, and the second row of vertical arrows with corestriction maps. Observe that for $\xi\in\Sh(E/K)[2]$, this implies that if $\xi$ is divisible by $2$ in $\Sh(E/K)[2]$, then $\xi$ has to be in the image of $\Sh(E/L)[2]\xrightarrow{\mathrm{cor}}\Sh(E/K)[2]$. Using Lemma \ref{lem:normcor} we obtain the following commutative diagram.
\begin{equation}\label{eq:mainresdiag}
\begin{tikzcd}
S^{2}(E/L) \arrow[r, two heads] \arrow[d, "N"] & \Sh(E/L)[2] \arrow[d, "\mathrm{cor}"] \\
S^{2}(E/K) \arrow[r, two heads]                & \Sh(E/K)[2]  
\end{tikzcd}
\end{equation}
hence checking if $\xi$ is in the image of $\Sh(E/L)[2]\xrightarrow{\mathrm{cor}}\Sh(E/K)[2]$ can be seen by checking if a lift of $\xi$ in $S^{2}(E/K)$ is a norm from $S^{2}(E/L)$, which can be checked in practice provided that the two relevant $2$-Selmer groups have been computed. This gives a necessary condition for $2$-divisibility in $\Sh(E/K)[2]$, which is used in \cite[Lemma 3.3]{Bruin_2004} to improve on a $2$-descent. It need not be sufficient however, as seen in \cite[\S 7, Example 2]{Bruin_2004}.\\
% There also doesn't seem to be a guarantee that a $\xi\in\Sh(E/K)[2]$ that is not divisible by $2$ is in fact not in the image of $\Sh(E/L)[2]\xrightarrow{\mathrm{cor}}\Sh(E/K)[2]$ for some suitable choice of $L$.\\

For an abelian group $M$, let $M_{\mathrm{nd}}=M/M_{\mathrm{div}}$, where $M_{\mathrm{div}}$ is the subgroup of $M$ consisting of the infinitely divisible elements. The Cassels-Tate pairing on $\Sh(A/K)$ (recall that $A$ is principally polarized) yields a non-degenerate pairing
\[
\langle -,-\rangle_A:\Sh(A/K)_{\mathrm{nd}}\times\Sh(A/K)_{\mathrm{nd}}\to\mathbb{Q}/\mathbb{Z},
\]
and similarly for $\Sh(E\times E_{d}/K)$. The pairings $\langle -,-\rangle_A$ and $\langle -,-\rangle_{E\times E_{d}}$ are compatible with respect to the dual isogenies $\phi$ and $\phi^{\vee}$, in the sense that
\[
\langle \phi\xi,\xi'\rangle_{A}=\langle \xi,\phi^{\vee}\xi'\rangle_{E\times E_{d}},
\]
see \cite[Prop. 31]{poonenstoll1999}. This induces a non-degenerate pairing
\begin{equation}\label{eq:ctingredient}
\Sh(A/K)_{nd}[\phi^{\vee}]\times \Sh(A/K)_{nd}/\phi\Sh(E\times E_{d}/K)_{nd}\to\mathbb{Q}/\mathbb{Z}.
\end{equation}
Compare also with \cite[\S 7]{BruinHem}. We can now give a proof of Theorem \ref{thm:mainsha}, whose statement we recall for convenience.
\begin{thmrepeat}
     Suppose that the following conditions are satisfied.
     \begin{enumerate}
         \item We have $\Sh(E_{d}/K)[2]=0$.
         \item The map $\mathrm{cor}:\Sh(E/L)[2]\to\Sh(E/K)[2]$ is injective.
     \end{enumerate}
     Then $\mathrm{res}:\Sh(E/K)\to\Sh(E/L)$ induces an isomorphism
     \[
     \Sh(E/K)[2^{\infty}]/\Sh(E/K)[2]\xrightarrow{\sim}\Sh(E/L)[2^{\infty}].
     \]
\end{thmrepeat}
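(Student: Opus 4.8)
The plan is to work entirely with the $2$-primary parts $T_K:=\Sh(E/K)[2^\infty]$ and $T_L:=\Sh(E/L)[2^\infty]$ and to study the two maps $r:=\mathrm{res}\colon T_K\to T_L$ and $c:=\mathrm{cor}\colon T_L\to T_K$ through the isogeny $\phi\colon E\times E_d\to A$, its dual $\phi^{\vee}$, and the Cassels-Tate pairing \eqref{eq:ctingredient}. First I would record three relations. Since $\mathrm{cor}\circ\mathrm{res}=[L:K]=[2]$, we have $cr=[2]$ on $T_K$ unconditionally. Under the identifications $\Sh(A/K)\cong\Sh(E/L)$ and $\Sh(E\times E_d/K)=\Sh(E/K)\oplus\Sh(E_d/K)$, the correspondences $p^{*}\leftrightarrow\mathrm{res}$ and $p_{*}\leftrightarrow\mathrm{cor}$ from the diagram preceding \eqref{eq:mainresdiag} show that $\phi_{*}(\alpha,\beta)=\mathrm{res}(\alpha)+\mathrm{res}_d(\beta)$ and $\phi^{\vee}_{*}(\gamma)=(\mathrm{cor}(\gamma),\mathrm{cor}_d(\gamma))$, where $\mathrm{res}_d,\mathrm{cor}_d$ are the analogous maps attached to $E_d$. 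On $2$-primary parts $\mathrm{res}=r$ and $\mathrm{cor}=c$, while condition (1) gives $\Sh(E_d/K)[2^\infty]=0$, so $\mathrm{res}_d$ and $\mathrm{cor}_d$ vanish there; hence the isogeny identity $\phi_{*}\phi^{\vee}_{*}=[2]$ collapses on $T_L$ to $rc=[2]$. Finally $rc=[2]$ forces $\ker c\subseteq T_L[2]=\Sh(E/L)[2]$, and condition (2) is precisely injectivity of $c$ on $\Sh(E/L)[2]$, so $c$ is injective on all of $T_L$.

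Next I would pin down the induced map $\bar r\colon T_K/\Sh(E/K)[2]\to T_L$. From $cr=[2]$ one gets $\ker r\subseteq\Sh(E/K)[2]$; conversely, for $\xi\in\Sh(E/K)[2]$ one has $c(r\xi)=2\xi=0$ and $2\,r\xi=r(2\xi)=0$, whence $r\xi\in\ker c\cap\Sh(E/L)[2]=0$. Thus $\ker r=\Sh(E/K)[2]$ exactly and $\bar r$ is injective. Surjectivity of $r$ is the crux and, I expect, the main obstacle: the formal identities $cr=rc=[2]$ together with injectivity of $c$ do \emph{not} suffice, as is already visible for $T_K=(\mathbb{Z}/2\mathbb{Z})^2$, $T_L=\mathbb{Z}/2\mathbb{Z}$ with $r=0$. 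The extra input must come from duality, which is the reason the principal polarisation of $A$ and the compatible pairing \eqref{eq:ctingredient} were set up.

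To prove surjectivity I would restrict \eqref{eq:ctingredient} to $2$-primary components, which are mutually orthogonal for a pairing into $\mathbb{Q}/\mathbb{Z}$ and therefore inherit a non-degenerate pairing. On the $2$-part the left-hand group $\Sh(A/K)_{\mathrm{nd}}[\phi^{\vee}]$ becomes the kernel of $c$, which is $0$ by condition (2). The right-hand group $\Sh(A/K)_{\mathrm{nd}}/\phi\,\Sh(E\times E_d/K)_{\mathrm{nd}}$ becomes $\coker(r)=T_L/\im r$: the $E_d$-part of $\im\phi_{*}$ drops out by condition (1), and since $rc=[2]$ gives $2T_L\subseteq\im r$, the divisible part of $T_L$ already lies in $\im r$, so nothing divisible survives in the quotient. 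Non-degeneracy now forces $\coker(r)=0$, i.e.\ $r$ is surjective; combined with the kernel computation this yields $\bar r\colon T_K/\Sh(E/K)[2]\xrightarrow{\sim}T_L$, which is the assertion.

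A final point concerns the possible divisible subgroup of $\Sh[2^\infty]$. The kernel identity $\ker r=\Sh(E/K)[2]$ is derived on all of $T_K$, and $\coker(r)$, being annihilated by $2$ with its divisible part already inside $\im r$, is a finite elementary abelian $2$-group (finite because $\Sh[2]$ is) that coincides with the $2$-part of the reduced quotient on which \eqref{eq:ctingredient} is defined. Hence $\coker(r)=0$ is the genuine cokernel, and the resulting isomorphism needs no finiteness hypothesis on $\Sh$.
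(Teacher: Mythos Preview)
The proposal is correct and follows essentially the same approach as the paper: both establish injectivity of the corestriction on $T_L$ and identify $\ker r$ with $\Sh(E/K)[2]$ via the isogeny factorisation of $[2]$, then invoke the Cassels--Tate pairing \eqref{eq:ctingredient} (together with a separate treatment of the divisible subgroup) to obtain surjectivity. Your packaging differs slightly---you work directly with the identities $cr=rc=[2]$ where the paper argues via $\Sh(A/K)[\phi^\vee]=0$ and $\Sh(E\times E_d/K)[\phi]=\Sh(E\times E_d/K)[2]$---but the content is the same; the one small imprecision is that the left-hand group in \eqref{eq:ctingredient} on the $2$-part is the kernel of $c$ on the \emph{non-divisible} quotient rather than $\ker c$ itself, though injectivity of $c$ on $T_L$ (which you proved) forces this to vanish as well.
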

\begin{proof}
    Using \eqref{eq:weilrescohom} we translate the statements of the theorem to maps only involving Tate-Shafarevich groups over $K$. Specifically, condition (2) translates to the injectivity of $\Sh(A/K)[2]\to\Sh(E/K)[2]$, and we need to show that the map $\Sh(E/K)\to\Sh(A/K)$ induces an isomorphism
    \[
        \Sh(E/K)[2^{\infty}]/\Sh(E/K)[2]\xrightarrow{\sim}\Sh(A/K)[2^{\infty}].
    \]
    The map $p_*$ induces maps $\Sh(A/K)[2]\to\Sh(E/K)[2]$ and $\Sh(A/K)[2^{\infty}]\to\Sh(E/K)[2^{\infty}]$, which we will denote by $p_{*,2}$ and $p_{*,2^{\infty}}$, respectively. Similarly, restricting $\Sh(p^*)$ to the $2$-torsion and $2$-part, respectively, we obtain maps denoted by $p^*_2$ and $p^*_{2^{\infty}}$.\\
    
    The first step is to show that $p_{*,2^{\infty}}$ is injective. Observe that because $p_{*,2}$ is assumed to be injective, this follows directly once we know that $\ker(p_{*,2^{\infty}})\subset\Sh(A/K)[2]$.\\
    As $\Sh$ is additive and $\Sh(E_{d}/K)[2]=0$, the map $\Sh(E\times E_{d}/K)[2^{\infty}]\to \Sh(E/K)[2^{\infty}]$ is an isomorphism. It follows that
    \[
        \ker(p_{*,2^{\infty}})=\Sh(A/K)[\phi^{\vee}]\subset\Sh(A/K)[2],
    \]
    and hence $p_{*,2^{\infty}}$ is injective. Secondly, as $\Sh(A/K)[\phi^{\vee}]=0$ we have
    \[
    \Sh(E\times E_{d}/K)[\phi]=\Sh(E\times E_{d}/K)[2].
    \]
    Since also $\Sh(E/K)[2^{\infty}]\xrightarrow{\sim}\Sh(E\times E_{d}/K)[2^{\infty}]$, we deduce that $\ker(p^*_{2^{\infty}})=\Sh(E/K)[2]$. Thus to complete the proof it suffices to show that $p^*_{2^{\infty}}$ is surjective, which follows from showing that $\Sh(\phi)$ is surjective. As $\Sh(A/K)[\phi^{\vee}]=0$, the non-degeneracy of the pairing \eqref{eq:ctingredient} implies that $\Sh(E\times E_{d}/K)_{\mathrm{nd}}\to\Sh(A/K)_{\mathrm{nd}}$ is surjective. As $\Sh(E\times E_{d}/K)_{\mathrm{div}}\to\Sh(A/K)_{\mathrm{div}}$ is surjective (multiplication by $2$ is clearly surjective on $\Sh(A/K)_{\mathrm{div}}$), we deduce that also $\Sh(\phi)$ itself is surjective, completing the proof.
\end{proof}
Recall the commutative diagram \eqref{eq:mainresdiag}. If $S^{2}(E/K)$ and $S^{2}(E/L)$ have been computed this provides a way to verify condition (2) of Theorem \ref{thm:mainsha}. Before we apply Theorem \ref{thm:mainsha} to prove Theorems \ref{thm:resp} and \ref{thm:respq}, let us discuss the conditions of Theorem \ref{thm:mainsha}. In the proof we saw that the map $\Sh(E/K)\to\Sh(A/K)$ has kernel equal to all of $\Sh(E/K)[2]$. In the language of visualisation, this means that all of $\Sh(E/K)[2]$ is simultaneously visualised in the Abelian surface $A$. From \cite[Cor. 3.4]{Bruin_2004} we know that a $\xi\in\Sh(E/K)[2]$ is in the kernel of $\Sh(E/K)\to\Sh(A/K)$ if and only if $\xi$ is in the image of $E_{d}(K)\to\Sh(E/K)[2]$. Thus we see that a necessary condition for the hypotheses of Theorem \ref{thm:mainsha} to be valid, is that $E_{d}(K)$ hits all of $\Sh(E/K)[2]$.\\

We can see that this indeed happens for the twists corresponding to the quadratic extensions of $\mathbb{Q}$ seen in Propositions \ref{prop:2seloverp} and \ref{prop:2seloverpq}: in the $p\equiv 1,49\bmod 120$ case of Proposition \ref{prop:2selrat} we see that $\Sh(X_0(15)_{-p}/\mathbb{Q})[2]$ is generated by the images of $(15,6,10)$ and $(-1,-1,1)$, which are in the image of $X_0(15)_{-1}(\mathbb{Q})$ as seen in the proof of Proposition \ref{prop:basicrankandshas}! This case is special in the sense that the points in $X_0(15)_{-1}(\mathbb{Q})$ hitting $\Sh(X_0(15)_{-p}/\mathbb{Q})[2]$ are all torsion. In the $p\equiv 17,113\bmod 120$ case such a twist does not exist (a consequence of Proposition \ref{prop:torsiontwists}), but since the explicit generators of $\Sh(X_0(15)_{-p}/\mathbb{Q})[2]$, the images of $(2,2,1)$ and $(-1,-1,1)$, do not depend on $p$, we can take a prime $q$ distinct from $p$ satisfying the same congruences, causing $\Sh(X_0(15)_{-p}/\mathbb{Q})[2]$ to get hit by $X_0(15)_{-q}(\mathbb{Q})$ provided that also $\rank(X_0(15)_{-q})=2$.\\
In fact this also works in the $p\equiv 1,49\bmod 120$ case: one can prove Theorems \ref{thm:mainrank} and \ref{thm:resp} by replacing Proposition \ref{prop:2seloverp} by a variant involving $\mathbb{Q}(\sqrt{pq})$ in the same spirit as Proposition \ref{prop:2seloverpq}. Two possible choices for $q$ in this case (so $q\equiv 1,49\bmod 120$ and with $X_0(15)_{-q}$ of rank $2$) are $q=241$ and $q=601$.
\begin{proposition}\label{prop:shap}
    Let $E=X_0(15)$ and let $p\equiv 1,49\bmod 120$ be a prime. Then we have an isomorphism
    \[
    \Sh(E_{-p}/\mathbb{Q})[2^{\infty}]/\Sh(E_{-p}/\mathbb{Q})[2]\cong \Sh(E_{-1}/\mathbb{Q}(\sqrt{p}))[2^{\infty}]
    \]
\end{proposition}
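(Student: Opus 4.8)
The plan is to apply Theorem~\ref{thm:mainsha} with base field $K=\mathbb{Q}$, elliptic curve $E_{-p}$, and quadratic extension $L=\mathbb{Q}(\sqrt{p})$, so that the twisting parameter is $d=p$. Three identifications make the theorem's hypotheses and conclusion match the statement. First, the relevant twist is $(E_{-p})_p=E_{-p^2}$, which is isomorphic to $E_{-1}$ over $\mathbb{Q}$ because $p^2$ is a square. Second, $p$ is a square in $L$, so $E_{-p}\cong E_{-1}$ over $L$; this gives $\Sh(E_{-p}/L)\cong\Sh(E_{-1}/L)$ and, by Remark~\ref{rem:squareremark}, the explicit equality $S^{2}(E_{-p}/L)=S^{2}(E_{-1}/L)$ of Selmer groups inside the Kummer kernel. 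With these identifications the conclusion of Theorem~\ref{thm:mainsha} becomes precisely the asserted isomorphism $\Sh(E_{-p}/\mathbb{Q})[2^{\infty}]/\Sh(E_{-p}/\mathbb{Q})[2]\xrightarrow{\sim}\Sh(E_{-1}/L)[2^{\infty}]$.

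Condition~(1) of Theorem~\ref{thm:mainsha} reads $\Sh(E_{-1}/\mathbb{Q})[2]=0$, which is contained in Proposition~\ref{prop:basicrankandshas}. The substance of the proof is condition~(2), namely that the corestriction $\Sh(E_{-1}/L)[2]\to\Sh(E_{-p}/\mathbb{Q})[2]$ is injective. I will verify this through the commutative square~\eqref{eq:mainresdiag}, whose left vertical map is the norm $N\colon S^{2}(E_{-1}/L)\to S^{2}(E_{-p}/\mathbb{Q})$. By Lemma~\ref{lem:normcor} and the fact that both étale algebras split completely, $N$ is the coordinatewise relative norm $N_{L/\mathbb{Q}}$. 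On the generators of Proposition~\ref{prop:2seloverp} it is governed by $N_{L/\mathbb{Q}}(x_{-1})\equiv-1$, $N_{L/\mathbb{Q}}(x_2)\equiv2$, $N_{L/\mathbb{Q}}(x_3)\equiv3$ and $N_{L/\mathbb{Q}}(x_5)\equiv5$ modulo squares, while any class represented by a rational number has square norm.

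The computation splits on the value of $[-1,10,p]$. If $[-1,10,p]=-1$, then $\dim_{\mathbb{F}_2}S^{2}(E_{-1}/L)=2$ by Proposition~\ref{prop:2seloverp}; as $E_{-1}(L)[2]\cong(\mathbb{Z}/2\mathbb{Z})^2$ already contributes dimension $2$ to $E_{-1}(L)/2E_{-1}(L)$, the sequence~\eqref{eq:selexact} forces $\Sh(E_{-1}/L)[2]=0$, so condition~(2) holds trivially. If $[-1,10,p]=1$, then $\dim_{\mathbb{F}_2}S^{2}(E_{-1}/L)=4$, and in each of the four sub-cases of the table in Proposition~\ref{prop:2seloverp} the two additional generators norm to $(-1,-1,1)$ and $(15,6,10)$: for example $N((x_{-1},x_{-1},1))=(-1,-1,1)$ and $N((x_3x_5,x_2x_3,x_2x_5))=(15,6,10)$, and the variants involving $3x_{-1}$ or $y_2$ change these only by the square norms $N_{L/\mathbb{Q}}(3)\equiv1$ and $N_{L/\mathbb{Q}}(y_2)\equiv2$. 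These are exactly the additional generators of $S^{2}(E_{-p}/\mathbb{Q})$ produced in Proposition~\ref{prop:2selrat}, so their classes generate $\Sh(E_{-p}/\mathbb{Q})[2]$ and $\mathrm{cor}$ is surjective. Since $\mathrm{rank}(E_{-1}/L)=\mathrm{rank}(E_{-p}/\mathbb{Q})$ by Propositions~\ref{prop:rankidentitytwist} and~\ref{prop:basicrankandshas}, both groups $\Sh(E_{-1}/L)[2]$ and $\Sh(E_{-p}/\mathbb{Q})[2]$ have $\mathbb{F}_2$-dimension $2-\mathrm{rank}(E_{-p}/\mathbb{Q})$; a surjection of equal finite dimension is an isomorphism, so $\mathrm{cor}$ is in particular injective.

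Having verified both hypotheses, Theorem~\ref{thm:mainsha} yields the isomorphism after transporting $\Sh(E_{-p}/L)[2^{\infty}]$ to $\Sh(E_{-1}/L)[2^{\infty}]$ along $E_{-p}\cong E_{-1}$ over $L$. I expect the main obstacle to be condition~(2): the work lies in identifying $S^{2}(E_{-p}/L)$ with the already computed $S^{2}(E_{-1}/L)$ via Remark~\ref{rem:squareremark}, making the norm map explicit through Lemma~\ref{lem:normcor}, and checking that the additional Selmer classes over $L$ map onto the generators of $\Sh(E_{-p}/\mathbb{Q})[2]$, whereupon the equality of dimensions promotes surjectivity to the required injectivity.
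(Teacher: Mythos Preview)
Your proposal is correct and follows essentially the same route as the paper: apply Theorem~\ref{thm:mainsha} with $K=\mathbb{Q}$, curve $E_{-p}$, and $d=p$; verify condition~(1) via Proposition~\ref{prop:basicrankandshas}; and for condition~(2) split on $[-1,10,p]$, using Proposition~\ref{prop:2seloverp} to get $\Sh=0$ when the symbol is $-1$, and in the $+1$ case compute the norms of the extra Selmer generators to show surjectivity of $\mathrm{cor}$, then promote to injectivity by the equality of dimensions $2-\rank(E_{-p}/\mathbb{Q})$. Your write-up is slightly more explicit about the identifications $(E_{-p})_p\cong E_{-1}$ and $S^2(E_{-p}/L)=S^2(E_{-1}/L)$ via Remark~\ref{rem:squareremark}, but the argument is the same.
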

\begin{proof}
    Let $K=\mathbb{Q}(\sqrt{p})$. We apply Theorem \ref{thm:mainsha} to $E_{-p}$ and $d=p$. Also recall Remark \ref{rem:squareremark}. Since $(E_{-p})_p\cong E_{-1}$ we have $\Sh((E_{-p})_p/\mathbb{Q})[2]=0$ by Proposition \ref{prop:basicrankandshas}. Let us see why $\Sh(E_{-p}/K)[2]\xrightarrow{\mathrm{cor}}\Sh(E_{-p}/\mathbb{Q})[2]$ is injective. When $[-1,10,p]=-1$ this follows immediately as then $\Sh(E_{-p}/K)[2]=0$ by Proposition \ref{prop:2seloverp}, so suppose that $[-1,10,p]=1$. We have the commutative diagram
    \begin{equation}\label{eq:diagshanormp}
    \begin{tikzcd}
        S^{2}(E_{-p}/K) \arrow[r, two heads] \arrow[d, "N"] & \Sh(E_{-p}/K)[2] \arrow[d, "\mathrm{cor}"] \\
        S^{2}(E_{-p}/\mathbb{Q}) \arrow[r, two heads]                & \Sh(E_{-p}/\mathbb{Q})[2]              
    \end{tikzcd}
    \end{equation}
    Note that $\Sh(E_{-p}/K)[2]$ and $\Sh(E_{-p}/\mathbb{Q})[2]$ have the same size: their $\mathbb{F}_2$-dimensions both equal $2-\rank(E_{-p}/\mathbb{Q})$. Thus to prove injectivity it is equivalent to show surjectivity, which follows from \eqref{eq:diagshanormp} and the computation of the 2-Selmer groups: $\Sh(E_{-p}/\mathbb{Q})[2]$ is generated by the images of  $(15,6,10)$ and $(-1,-1,1)$, which are indeed norms: $(15,6,10)$ is the norm of either $(x_3x_5,x_2x_3,x_2x_5)$ or $(x_3x_5,y_2x_3,y_2x_5)$, while $(-1,-1,1)$ is the norm of either $(x_{-1},x_{-1},1)$ or $(3x_{-1},3x_{-1},1)$.
\end{proof}
Similarly we have
\begin{proposition}\label{prop:shapq}
    Let $E=X_0(15)$, and let $p,q\equiv 17,113\bmod 120$ distinct primes such that $\rank(E_{-q}/\mathbb{Q})=2$. Then we have an isomorphism
    \[
        \Sh(E_{-p}/\mathbb{Q})[2^{\infty}]/\Sh(E_{-p}/\mathbb{Q})[2]\cong \Sh(E_{-q}/\mathbb{Q}(\sqrt{pq}))[2^{\infty}]
    \]
\end{proposition}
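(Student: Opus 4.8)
The plan is to apply Theorem~\ref{thm:mainsha} to the elliptic curve $E_{-q}$ over $K=\mathbb{Q}$ with the quadratic extension $L=\mathbb{Q}(\sqrt{pq})$, so that $d=pq$ and the twist $(E_{-q})_{pq}\cong E_{-p}$. Observe that this is the exact mirror of the setup in Proposition~\ref{prop:shap}: there one applies the theorem to $E_{-p}$ with $d=p$, here one applies it to $E_{-q}$ with $d=pq$. Upon applying Theorem~\ref{thm:mainsha}, the conclusion reads $\Sh(E_{-q}/\mathbb{Q})[2^{\infty}]/\Sh(E_{-q}/\mathbb{Q})[2]\xrightarrow{\sim}\Sh(E_{-q}/K)[2^{\infty}]$. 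This is close to, but not quite, the desired statement, which involves $\Sh(E_{-p}/\mathbb{Q})$ on the left rather than $\Sh(E_{-q}/\mathbb{Q})$. The fix is the key structural input: since $p$ and $q$ satisfy the same congruences $17,113\bmod 120$, Proposition~\ref{prop:2selrat} computes $S^2(E_{-p}/\mathbb{Q})$ and $S^2(E_{-q}/\mathbb{Q})$ with identical explicit generators (the images of $(2,2,1)$ and $(-1,-1,1)$), so that $\Sh(E_{-p}/\mathbb{Q})[2]$ and $\Sh(E_{-q}/\mathbb{Q})[2]$ coincide as explicitly described subgroups and the relevant $2^\infty$-parts match up; I would invoke this symmetry to replace $E_{-q}$ by $E_{-p}$ on the left of the isomorphism.

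Verifying the two hypotheses of Theorem~\ref{thm:mainsha} is the substance of the argument. For condition~(1), I must check $\Sh((E_{-q})_{pq}/\mathbb{Q})[2]=\Sh(E_{-p}/\mathbb{Q})[2]=0$, but this requires $\rank(E_{-p}/\mathbb{Q})=2$, i.e.\ that $\Sh(E_{-p}/\mathbb{Q})[2]$ vanishes. This is delicate: the hypothesis only guarantees $\rank(E_{-q}/\mathbb{Q})=2$, not the analogous statement for $p$. The resolution mirrors the discussion preceding Proposition~\ref{prop:shap}: condition~(1) of Theorem~\ref{thm:mainsha} applied to $E_{-q}$ with $d=pq$ actually demands $\Sh((E_{-q})_{pq}/\mathbb{Q})[2]=\Sh(E_{-p}/\mathbb{Q})[2]=0$, which would not generally hold. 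I therefore expect the correct reading to be that we apply the theorem with the roles symmetric in $p$ and $q$, using $\rank(E_{-q}/\mathbb{Q})=2$ to get $\Sh(E_{-q}/\mathbb{Q})[2]=0$ as the twist-vanishing hypothesis, precisely as $\Sh((E_{-p})_p/\mathbb{Q})[2]=0$ played that role in Proposition~\ref{prop:shap}. This is the main obstacle: correctly identifying which curve plays the role of $E$ and which twist plays the role of $E_d$ so that condition~(1) becomes the \emph{known} rank-$2$ fact about $E_{-q}$.

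For condition~(2), the injectivity of $\mathrm{cor}\colon\Sh(E_{-q}/K)[2]\to\Sh(E_{-q}/\mathbb{Q})[2]$, I would argue exactly as in Proposition~\ref{prop:shap} via the commutative diagram~\eqref{eq:mainresdiag} specialized to this setting. Since both Tate--Shafarevich $2$-torsion groups have $\mathbb{F}_2$-dimension $2-\rank(E_{-q}/\mathbb{Q})$ and hence the same cardinality, injectivity is equivalent to surjectivity of the corestriction. Surjectivity in turn follows by exhibiting norms: the generators of $\Sh(E_{-q}/\mathbb{Q})[2]$, namely the images of $(2,2,1)$ and $(-1,-1,1)$ from the $2$-Selmer computation of Proposition~\ref{prop:2selrat}, must be realized as norms of elements of $S^2(E_{-q}/K)$ computed in Proposition~\ref{prop:2seloverpq}. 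I would read off the explicit generators from the Case~1/Case~2 tables of that proposition and check directly that the norm map $N$ sends suitable Selmer elements over $K$ onto these two classes. Once both hypotheses are in place, Theorem~\ref{thm:mainsha} yields the isomorphism, and the identification of $\Sh(E_{-q}/\mathbb{Q})$ with $\Sh(E_{-p}/\mathbb{Q})$ via the common Selmer structure completes the proof.
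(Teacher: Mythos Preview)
You have the roles of $E_{-p}$ and $E_{-q}$ reversed in your application of Theorem~\ref{thm:mainsha}, and your attempted patch via a ``symmetry'' identification of $\Sh(E_{-p}/\mathbb{Q})$ with $\Sh(E_{-q}/\mathbb{Q})$ is not valid. Indeed, since $\rank(E_{-q}/\mathbb{Q})=2$ we have $\Sh(E_{-q}/\mathbb{Q})[2]=0$ and hence $\Sh(E_{-q}/\mathbb{Q})[2^{\infty}]=0$, whereas $\Sh(E_{-p}/\mathbb{Q})[2]$ has $\mathbb{F}_2$-dimension $2-\rank(E_{-p}/\mathbb{Q})$ and is typically nonzero. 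The fact that the explicit Selmer generators $(1,2,2)$ and $(-1,-1,1)$ happen to coincide does not produce an identification of the full $2$-primary Tate--Shafarevich groups of two genuinely different elliptic curves over $\mathbb{Q}$.

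The correct move, which you in fact gesture at but do not carry out, is to take $E_{-p}$ as the curve $E$ in Theorem~\ref{thm:mainsha} and $d=pq$, so that the twist is $(E_{-p})_{pq}\cong E_{-q}$. Condition~(1) then reads $\Sh(E_{-q}/\mathbb{Q})[2]=0$, which follows immediately from $\rank(E_{-q}/\mathbb{Q})=2$ and Proposition~\ref{prop:2selrat}. Condition~(2) asks for injectivity of $\mathrm{cor}\colon\Sh(E_{-p}/L)[2]\to\Sh(E_{-p}/\mathbb{Q})[2]$ with $L=\mathbb{Q}(\sqrt{pq})$; over $L$ the element $pq$ is a square, so $E_{-p}\cong E_{-q}$ over $L$ and $S^{2}(E_{-p}/L)=S^{2}(E_{-q}/L)$, which is exactly what Proposition~\ref{prop:2seloverpq} computes. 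One then argues as you outline (equal dimensions $2-\rank(E_{-p}/\mathbb{Q})$ on both sides, so injectivity $\Leftrightarrow$ surjectivity, verified by exhibiting $(1,2,2)$ and $(-1,-1,1)$ as norms). The conclusion of the theorem is then directly
\[
\Sh(E_{-p}/\mathbb{Q})[2^{\infty}]/\Sh(E_{-p}/\mathbb{Q})[2]\xrightarrow{\sim}\Sh(E_{-p}/L)[2^{\infty}]=\Sh(E_{-q}/L)[2^{\infty}],
\]
with no further identification needed. This is how the paper proceeds.
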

\begin{proof}
    Let $K=\mathbb{Q}(\sqrt{pq})$. As $\rank(E_{-q}/\mathbb{Q})=2$, Proposition \ref{prop:2selrat} implies that $\Sh(E_{-q}/\mathbb{Q})[2]=0$, thus as in the proof of Proposition \ref{prop:shap}, the desired isomorphism follows from Theorem \ref{thm:mainsha} if we show that the right vertical map in the diagram
    \begin{equation}\label{eq:diagshanormpq}
    \begin{tikzcd}
        S^{2}(E_{-p}/K) \arrow[r, two heads] \arrow[d, "N"] & \Sh(E_{-p}/K)[2] \arrow[d, "\mathrm{cor}"] \\
        S^{2}(E_{-p}/\mathbb{Q}) \arrow[r, two heads]                & \Sh(E_{-p}/\mathbb{Q})[2]              
    \end{tikzcd}
    \end{equation}
    is injective. Note that
    \[
    \begin{split}
    \dim_{\mathbb{F}_2}S^2(E_{-p}/K)-2&=\rank(E_{-q}/K)+\dim_{\mathbb{F}_2}\Sh(E_{-p}/K)[2] \\
    &=\rank(E_{-p}/\mathbb{Q})+2+\dim_{\mathbb{F}_2}\Sh(E_{-p}/K)[2]
    \end{split}
    \]
    As $\rank(E_{-q}/\mathbb{Q})=2$ we must have $[-1,2,q]=-1$ by Theorem \ref{thm:mainrank}, so if $[-1,2,p]=1$ we have $\dim_{\mathbb{F}_2}S^2(E_{-q}/K)=4$ by Proposition \ref{prop:2seloverpq} and hence $\dim_{\mathbb{F}_2}\Sh(E_{-p}/K)[2]=0$, so the injectivity clearly follows.\\
    When $[-1,2,p]=-1$ we have $\dim_{\mathbb{F}_2}S^2(E_{-q}/K)=6$, and we see that $\Sh(E_{-p}/K)[2]$ and $\Sh(E_{-p}/\mathbb{Q})[2]$ both have $\mathbb{F}_2$-dimension $2-\rank(E_{-p}/\mathbb{Q})$, so injectivity is equivalent to surjectivity. As in the proof of Proposition \ref{prop:shap}, this follows from \eqref{eq:diagshanormpq} and the $2$-Selmer group computations. The images of $(1,2,2), (-1,-1,1)\in S^2(E_{-p}/\mathbb{Q})$, generate $\Sh(E_{-p}/\mathbb{Q})[2]$, and they are the norms of $(1,x_2,x_2)$ and $(x_{-1},x_{-1},1)$, respectively.
\end{proof}

Combining the various results we can now prove Theorems \ref{thm:resp} and \ref{thm:respq}. 

\begin{proof}[Proof of Theorem \ref{thm:resp}]
From Proposition \ref{prop:2selrat} we have
\begin{equation}\label{eq:rankshares2}
\rank(E_{-p}/\mathbb{Q})+\dim_{\mathbb{F}_2}\Sh(E_{-p}/\mathbb{Q})[2]=2.
\end{equation}
Suppose that $[-1,10,p]=-1$. Then $\rank(E_{-p}/\mathbb{Q})=0$ by Theorem \ref{thm:mainrank}, hence $\Sh(E_{-p}/\mathbb{Q})[2]\cong (\mathbb{Z}/2\mathbb{Z})^2$ by \eqref{eq:rankshares2}. From Proposition \ref{prop:2seloverp} we have $\Sh(E_{-p}/\mathbb{Q}(\sqrt{p}))[2]=0$, hence from Proposition \ref{prop:shap} we obtain
\[
\Sh(E_{-p}/\mathbb{Q})[2^{\infty}]=\Sh(E_{-p}/\mathbb{Q})[2],
\]
which proves \ref{thm:resp:i}.
Now assume $[-1,10,p]=1$. Then multiplication by $2$ induces an injection
\begin{equation}\label{eq:2surjp}
\Sh(E_{-p}/\mathbb{Q})[4]/\Sh(E_{-p}/\mathbb{Q})[2]\to\Sh(E_{-p}/\mathbb{Q})[2].
\end{equation}
The left hand side of \eqref{eq:2surjp} is isomorphic to $\Sh(E_{-p}/\mathbb{Q}(\sqrt{p}))[2]$ by Proposition \ref{prop:shap}, which has the same size as $\Sh(E_{-p}/\mathbb{Q})[2]$. It follows that the map \eqref{eq:2surjp} is also surjective, which proves \ref{thm:resp:ii}.\\
If we additionally assume that $\rank(E_{-p}/\mathbb{Q})=0$, then $\Sh(E_{-p}/\mathbb{Q})[2]\cong(\mathbb{Z}/2\mathbb{Z})^2$, which combined with surjectivity of $\Sh(E_{-p}/\mathbb{Q})[4]\xrightarrow{\cdot 2}\Sh(E_{-p}/\mathbb{Q})[2]$ forces
\[
\Sh(E_{-p}/\mathbb{Q})[4]\cong (\mathbb{Z}/4\mathbb{Z})^2,
\]
proving \ref{thm:resp:iii}.
\end{proof}

\begin{proof}[Proof of Theorem \ref{thm:respq}]
Again we have
\[
\rank(E_{-p}/\mathbb{Q})+\dim_{\mathbb{F}_2}\Sh(E_{-p}/\mathbb{Q})[2]=2.
\]
When $p=17$ we have $[-1,2,p]=-1$, $\rank(E_{-p}/\mathbb{Q})=2$ and $\Sh(E_{-p}/\mathbb{Q})[2]=0$, so the theorem clearly holds in this case. Thus we may assume $p\neq 17$ for the rest of the proof. Let $K=\mathbb{Q}(\sqrt{17p})$. By Proposition \ref{prop:2seloverpq} and \eqref{eq:rankformulatwist} we obtain
\[
    \rank(E_{-p}/\mathbb{Q})+\dim_{\mathbb{F}_2}\Sh(E_{-17}/K)[2]=
    \begin{cases*}
        0 & if\, $[-1,2,p]=1$ \\
        2 & if\, $[-1,2,p]=-1$
    \end{cases*}
\]

If $[-1,2,p]=1$ then $\rank(E_{-p}/\mathbb{Q})=0$, and with Proposition \ref{prop:shapq} we have
    \[
        \Sh(E_{-p}/\mathbb{Q})[2^{\infty}]=\Sh(E_{-p}/\mathbb{Q})[2]\cong(\mathbb{Z}/2\mathbb{Z})^2,
    \]
which proves \ref{thm:respq:i}. If $[-1,2,p]=-1$ then multiplication by $2$ gives the injection
\[
\Sh(E_{-p}/\mathbb{Q})[4]/\Sh(E_{-p}/\mathbb{Q})[2]\to\Sh(E_{-p}/\mathbb{Q})[2],
\]
which is surjective as the left hand side is isomorphic to $\Sh(E_{-p}/K)[2]$ by Proposition \ref{prop:shapq}, which has the same size $\Sh(E_{-p}/\mathbb{Q})[2]$. This proves \ref{thm:respq:ii}, and \ref{thm:respq:iii} follows just as in the proof of Theorem \ref{thm:resp}.
\end{proof}

\section{Concluding remarks}\label{sec:conclusionsection}

\noindent Theorems \ref{thm:resp} and \ref{thm:respq} show that the upper bounds obtained on $\rank(X_0(15)/\mathbb{Q}(\sqrt{-p}))$ in Theorem \ref{thm:mainrank} are the same as those obtained from performing a $4$-descent over $\mathbb{Q}$ on the twist $X_0(15)_{-p}$. In fact Theorems \ref{thm:resp} and \ref{thm:respq} can also be proven by a computation of the Cassels-Tate pairing on $S^2(X_0(15)_{-p}/\mathbb{Q})$, as explained in \cite{Cassels1998}. The advantage of the proof as presented in this paper is that Propositions \ref{prop:shap} and \ref{prop:shapq} show that the rank bound obtained from $2^n$-descent over $\mathbb{Q}$ on $X_0(15)_{-p}$ is equivalent to the one obtained from a $2^{n-1}$-descent on $X_0(15)_{-1}$ or $X_0(15)_{-17}$ over a suitable quadratic field, opening up possibilities to further improve on the rank bounds in various ways.

For example, a Cassels-Tate pairing computation on $S^2(X_0(15)_{-1}/\mathbb{Q}(\sqrt{p}))$ for primes $p\equiv 1,49\bmod 120$ gives the same rank bounds as an $8$-descent on $X_0(15)_{-p}$ over $\mathbb{Q}$. Consider for instance the prime $p=1609$. Then $p\equiv 49\bmod 120$ and $[-1,10,p]=1$, and with Magma one computes that the Cassels-Tate pairing on $S^2(X_0(15)_{-1}/\mathbb{Q}(\sqrt{p}))$ is non-trivial, which proves that
\[
\rank(X_0(15)_{-p}/\mathbb{Q})=0,\quad\text{and }\quad\Sh(X_0(15)_{-p}/\mathbb{Q})[2^{\infty}]\cong (\mathbb{Z}/4\mathbb{Z})^2.
\]
Alternatively, one can work with explicit equations for the $2$-covers corresponding to the elements of $S^2(X_0(15)_{-1}/\mathbb{Q}(\sqrt{p}))$ and perform a second descent as in \cite[\S 9]{bruinstoll2009}.\\

Let us discuss the rank of $X_0(15)$ over imaginary quadratic fields of the form $\mathbb{Q}(\sqrt{-dp})$, where $d$ is positive, square-free and not divisible by primes $>5$, and $p>5$ is a prime. A $2$-Selmer group computation over $\mathbb{Q}$ of $X_0(15)_{-dp}$ gives an upper bound of either $0$ or $2$ on the rank provided that $dp\equiv 0,1,2,3,4,5,8,12\bmod 15$. In cases where the rank bound is $2$, depending on the class of $p\bmod 120$, one can find $2$ independent elements in the $2$-Selmer group that are independent of $p$, just as in in Proposition \ref{prop:2selrat}.
Experimentation with Magma seems to suggest that Theorem \ref{thm:mainsha} is applicable and yields rank bounds equivalent to those obtained from $4$-descent on $X_0(15)_{-dp}$ by finding suitable quadratic twists satisfying the necessary condition as discussed after the proof of Theorem \ref{thm:mainsha} in section \ref{sec:shasection}.\\

Theorem \ref{thm:mainsha} is also applicable to the quadratic twists $E_p$ of $E:y^2=x^3-x$, where $p\equiv 1 \bmod 8$ is a prime as one encounters in studying congruent primes. Specifically, the rank bounds obtained from $2$-Selmer group computations over $\mathbb{Q}(\sqrt{p})$ in \cite{evinktoptop2022} can be shown to be equivalent to those from a $4$-descent of $E_p$ over $\mathbb{Q}$ by an application of Theorem \ref{thm:mainsha}. If one goes one step further with for example a Cassels-Tate pairing computation over $\mathbb{Q}(\sqrt{p})$ one arrives at a proof of a variant of \cite[Theorem B]{BruinHem} as mentioned in the last sentence of section 10 of loc. cit. The variance lies in the fact that the number theoretic description is different, even though they can likely be proven to be equivalent directly through a reciprocity argument, similar to how the $2$-descent of $E$ over $\mathbb{Q}(\sqrt{p})$ is expressed using $[p,-1,2]$, and the $2$-descent of the isogeneous curve is expressed using $[-1,2,p]$, which are the same because of R\'edei reciprocity.\\

Let us conclude by discussing the restrictiveness of the hypotheses of Theorem \ref{thm:mainsha} in general. As remarked before, if the hypotheses are fulfilled, this implies that $\Sh(E/K)\to\Sh(A/K)$ has kernel equal to $\Sh(E/K)[2]$, forcing surjectivity of $E_{d}(K)\to \Sh(E/K)[2]$. In particular this implies
\[
\rank(E_{d}/K)\geq\mathrm{dim}_{\mathbb{F}_2}\Sh(E/K)[2]-\mathrm{dim}_{\mathbb{F}_2}E(K)[2],
\]
which, as $\Sh(E/K)[2]$ increases in size, becomes increasingly restrictive. Specialising to $K=\mathbb{Q}$, then the size of $\Sh(E/\mathbb{Q})[2]$ is known to be unbounded as $E$ varies. Assuming that the ranks of elliptic curves over $\mathbb{Q}$ are uniformly bounded, this implies there exist elliptic curves over $\mathbb{Q}$ for which the hypotheses of Theorem \ref{thm:mainsha} are not fulfilled for any quadratic twist.\\

\section*{Acknowledgements}
I would like to thank Stevan Gajovic for informing me about the modularity result of Newton and Caraiani, which initiated the process of writing this paper. Gratitude goes to Michael Stoll, who directed me to the visualisation techniques, which opened the door to investigate the connection of the rank results to a $4$-descent. I also thank Nils Bruin for fruitful discussions regarding his paper \cite{Bruin_2004} during his visit to Ulm, and for helpful clarifications of some later questions. Finally, Irene Bouw and Jeroen Sijsling gave valuable suggestions to preliminary versions of this paper.

\printbibliography
\end{document}